\documentclass[11pt,a4paper]{article}
\usepackage{amsthm,amsmath,amssymb,graphicx,color,array}
\usepackage[a4paper, total={6.5in,8in}]{geometry}
\usepackage{extarrows,slashbox,caption}
\usepackage{multirow}
\usepackage[T1]{fontenc}
\usepackage{amssymb,amsfonts,amsmath,textcomp,amsthm}
\usepackage{longtable}
\usepackage{fullpage}
\usepackage[]{geometry}
\usepackage{changepage}
\usepackage{lscape}
\usepackage{times}
\usepackage{tikz}
\usepackage{graphicx,multirow}
\usepackage[flushleft]{threeparttable}

\theoremstyle{definition}
\linespread{1}
\pagenumbering{arabic}
\theoremstyle{definition}
\newtheorem{thm}{Theorem}

\newtheorem{Def}[thm]{Definition}

\newtheorem{ex}{Example}

\begin{document}
	\title{
	A Multi-Server Retrial Queueing Inventory System With Asynchronous Multiple Vacations
}
\author{ { K. Jeganathan{$^1$}},{ T. Harikrishnan{$^2$}}, {K. Prasanna Lakshmi{$^3$}}  and { D. Nagarajan{$^{4,*}$}}\\\\		
	$\null^{1}$Ramanujan Institute for Advanced Study in Mathematics,\\
	University of Madras, Chennai, India.\\
	Email: kjeganathan@unom.ac.in	\\\\
	$\null^{2}$Department of Mathematics, Guru Nanak College(Autonomous),\\University of Madras, Chennai, India.\\
	Email:harikrishanan@gurunanakcollege.edu.in\\\\
	$\null^{3}$Department of Mathematics with Computer Apllications, \\Ethiraj College for Women, Chennai, India.\\
	Email: prasana.lakshmi99@gmail.com \\\\
	$\null^{4}$Department of Mathematics, Rajalakshmi Institute of Technology, Chennai, India.\\
	Email: dnrmsu2002@yahoo.com\\\\
	$*$ Corresponding author
}

\date{}
\newtheorem{lemma}{\bf Lemma}
\newtheorem{theorem}{\bf Theorem}
\numberwithin{equation}{section}
\maketitle
\begin{abstract}
   This article deals with asynchronous server vacation and customer retrial facility in a multi-server queueing-inventory system. The Poisson process governs the arrival of a customer. The system is comprised of $c$ identical servers, a finite-size waiting area, and a storage area containing $S$ items. The service time is distributed exponentially. If each server finds that there are an insufficient number of customers and items in the system after the busy period, they start a vacation. Once the server’s vacation is over and it recognizes there is no chance of getting busy, it goes into an idle state if the number of customers or items is not sufficient, otherwise, it will take another vacation. Furthermore, each server's vacation period occurs independently of the other servers. The system accepts a $(s, Q)$ control policy for inventory replenishment. For the steady-state analysis, the Marcel F Neuts and B Madhu Rao matrix geometric approximation approach is used owing to the structure of an infinitesimal generator matrix. The necessary stability condition and $\mathbb{R}$ matrix are to be computed and presented. After calculating the sufficient system performance measures, an expected total cost of the system is to be constructed and numerically incorporated with the parameters. Additionally, numerical analyses will be conducted to examine the waiting time of customers in the queue and in orbit, as well as the expected rate of customer loss.
	\end{abstract}
	\begin{tabbing}
		********* \= \kill
		Keywords:   \>  Asynchronous multiple vacations, Multi-server, Classical retrial facility, $(s, Q)$ ordering policy\\
	\end{tabbing}
	\textbf{Mathematics Subject classification:}   60K25

\section{Introduction}\label{sec1}

\indent \indent Queueing inventory models are more realistic and have more space for study compared to real-world waiting-line systems and servers that are allowed to take vacations. Giving a server time off usually keeps it operating efficiently and increases its lifespan. The continuous operation of a server in the system may cause physical or emotional stress. As a result, the vacation gives the human server a chance to unwind and recharge. Psychiatrists have also claimed that allowing employees to take breaks during their working hours can reduce their stress levels and enhance their productivity.

As per the survey by the Society for Human Resource Management, 67\% of businesses provided a bank of paid time off in 2022. The same survey found that 99\% of companies offered paid vacations, while only 20\% provided paid sick days and paid mental health days, and just 6\% offered unlimited paid time off. In 2019, the World at Work Association conducted a poll of employers as part of a study on employee benefits, including the types of paid time off provided. On average, employers offered 17 days of paid time off for workers with less than a year of service, 18 days for those with between one and two years of service, and so on.

As evidenced by the real-world component, research into the queueing-inventory model's exploration of the vacation process is a necessary area of study. Despite the fact that there are numerous research concepts in the queueing-inventory system regarding the vacation process, the vacation process in multi-server service facilities has not yet been thoroughly researched. This study will thus persuade a multi-server queueing-inventory system's vacation procedure.

\subsection{Motivation}\label{subsec1}
This mathematical model is motivated by a recent visit to a mobile phone store. It works with multiple salespeople to sell mobile phones to arriving customers. During peak hours, the customer arrival rate is higher than during non-peak hours. In such an instant, all sales executives will be busy handling the arriving customers. Due to the long service process, the sales executives feel tired and need some relaxation. Customers are allowed to take a rest period for an exponential amount of time if their arrival rate decreases. After coming back from vacation, they start their work with enthusiasm. The author used this circumstance as inspiration to develop a mathematical model for the stochastic queueing-inventory system. 

\subsection{Literature Review}\label{subsec2}
\indent \indent The existing literature review of the queueing inventory system(QIS) shows us that, in earlier days, researchers believed that customers received the service immediately after they arrived at the system. Yet, with real-life phenomena, this is not always the case. For instance, when a customer visits a bike showroom, a salesperson usually spends some time explaining the features and benefits of the product. To model such service processes accurately in a stochastic queuing-inventory system (QIS), the concept of positive service time was introduced. This concept was initially proposed by Melikov \cite{mel}, Sigman and Simchi-Levi \cite{sigsi}. Berman et al. \cite{berman} developed a deterministic service time model for a dynamic problem. Several studies have further discussed the concept of positive service time in QIS, including \cite{mag, nish, krishsan, krish15}.

A customer who attempts to reach customer care but disconnects due to a busy line and then periodically tries again until successfully connected to a server is known as a retrial customer. When the waiting room reaches its maximum capacity, no additional customers will be admitted. They can leave the system for a short period of time to take a break or perform other tasks before rejoining the line. This retry notion is crucial to the study of queueing inventory theory. The retrial concept in inventory systems was first explored by Artaljeo et al. \cite{ak}. Paul Manuel et al. \cite{palman} investigated a perishable inventory system with a retrial and negative demand. Typically, the retry of a particular client is independent of the other consumers in its orbit. Nevertheless, the rate of retrial consumers is directly proportional to orbit level; this inventory system is known as the "classical retrial inventory system(CRIS)". Ushakumari \cite{usha} developed an analytical strategy for CRIS. Nithya et. al. \cite{nssj} analyzed the two commodities in the single-server QIS with a classical retrial facility.

In the queueing-inventory model, a server is able to take a vacation if it cannot offer the subsequent service. Once the server's busy phase ends, the server in the system rests for an arbitrary period of time. Practically, allowing the server to take a rest helps it recover its energy and keep offering its services without getting tired. Observing the QIS, the possibility of a server completing his busy period is as follows: 1) When no inventory is available 2) The system does not have any customers. The QIS includes details on the rules for returning from vacations. While completing vacation, the server checks the queue and inventory level. After the vacation is over, the server checks the number of people in line and the number of items in stock. If there are signs that the load is getting heavier, the server will quickly start serving customers again. Otherwise, they may take additional vacations, referred to as multiple vacations. In addition, numerous scholars in the field of QIS have examined other types of vacation, such as delayed vacation,  working vacation, differentiated vacation, etc.

Daniel and Ramanarayanan \cite{2d} introduced the concept of vacation within a QIS. They used the phrase "rest period" to refer to a server's vacation time. In \cite{3d}, it was allowed for the server to take a break once the inventory count reached zero, and it was assumed that the duration of the break followed a general distribution. Any customers who arrive during the server's break are regarded as lost. In a marketing department, each sales executive is given a monthly target. Once one completes his target, he is allowed to take a vacation. Here, each server can complete its target individually without depending on others. Recently, Yuying Zhang \cite{yuy} contemplated multiple vacations for a QIS server. The replenishment process was implemented under  a random order size policy and the lost sales during the server's absence were also observed. Dequan Yue and Yaling Qin \cite{yue} studied production QIS, assuming that the manufacturing facility would take a vacation when the current inventory level reached $S$ items and would end the vacation as soon as the quantity of inventory items fell below $s$. Similarly, it terminates the vacation whenever the quantity of inventory items falls below $s$. Koroliuk et. al. \cite{koro} investigated the QIS server's vacation assignment. It can be noted that the server is allowed to take a break if there are no customers waiting in the queue, and they also permit the server to terminate their vacation when the queue size reaches a predetermined threshold. Otherwise, his vacation time will be extended. For more details about the QIS with vacation facility, readers can refer to the following papers \cite{rsa, ags, cr}.

Yadavalli and Jeganathan  \cite{yad} examined a perishable QIS having dual heterogeneous servers. One of the servers is capable of taking multiple breaks while the other is working. In their research on finite retrial queueing systems with server vacations, Jeganathan et.al \cite{jegreiy}) found that utilizing heterogeneous servers was the most effective strategy when compared to using homogeneous servers. Sugapriya et al. \cite{sugapr} analyzed an inventory system where the arrival of customers depends on the number of items in stock and the server takes a vacation when the stock level is empty. Sivakumar \cite{keypap} studied a retrial inventory system with multiple server vacations, where the server extends the vacation if no items are available in the inventory.

Numerous businesses in the inventory industry concentrate on enhancing their service facilities to meet the demands of their customers. This applies to a variety of establishments, such as car dealerships, clothing stores, jewellery shops, and others that use multi-server service facilities. These real-life scenarios inspired many authors to specialize in stochastic modeling, leading to an interest in increasing the number of servers in the QIS among numerous authors. Krishnamoorthy et al. \cite{Kri} utilized a multi-server retrial queue in  QIS, and proposed an algorithmic approach for it. Yadavalli et al. discussed a multi-server service facility in their finite population QIS, where a customer enters orbit if all servers are occupied. They applied the Laplace-Stieltjes transform to determine the customer's unconditional waiting time \cite{Yad}. Fong-Fan Wang et al. \cite{Fon} analyzed a priority multi-server service with level-dependent retrial QIS and used a generalized stochastic Petri net to study parameter changes and service rate heterogeneity. Further information on multi-server QIS can be found in the cited papers \cite{Ano, Chak1, Fon1, Gabi, Nai, Pau, Raj, Ras}.

Jeganathan et. al. \cite{Jeg9}  proposed two types of multi-server service facilities to enhance the multi-server QIS by selling and serving items separately. The number of users in the system varies unpredictably and fluctuates between peak and normal hours. For instance, during festivals, more customers visit textile shops than on regular days, while the number of customers is lower on regular days. Meanwhile, Harikrishnan et al. \cite{Hari} investigated the server activation policy within a queuing-inventory system that features homogeneous servers and multiple threshold stages.

Bright and Taylor \cite{Bri} analysed the calculation of equilibrium distributions in level-dependent QBD processes and presented algorithms for computing the equilibrium distributions. Zaiming Liu et.al \cite{Zaim} present a detailed analysis of an inventory system in a retrial queue with a level-dependent retrial rate, denoted as $M/PH/1$. They derive several significant findings from their analysis, such as the steady-state probability distribution of inventory levels, the steady-state probability distribution of customers in the system, and the average waiting time of customers in the system. Furthermore, the authors examine the impact of several system parameters, including the arrival rate, service rate, and retrial rate, on system performance. Reshmi and Jose \cite{Res1} perform a mathematical analysis of a perishable inventory system with dependent retrial loss. According to the authors, the system consists of a single server, an exponential service time, retrials with dependent retrial loss, and a Markovian arrival process (MAP). A $(s, S)$ inventory policy controls the level of inventory, and the demand rate is time-dependent. The study ends with a numerical example that highlights the analytical findings and shows how the dependent retry loss affects the system's performance. Rejitha and Jose \cite{Reji} propose a mathematical analysis of a stochastic inventory system with two service modes and customer retrials. The authors characterise the system as a single server with two service modes, fast and slow, with exponentially increasing retrial times. The level of inventory is governed by a $(s, S)$ inventory policy, and the demand rate varies with time. The study concludes with a numerical example that exhibits the influence of the two modalities of service on system performance and displays the analytical results.
Narayanan et.al \cite{Nara} demonstrate that the system has a single server with exponential service time, a vacation period, and a stochastic lead time with correlation. In this paper, the authors analysed the level-dependent quasi-birth-death process and examine the effects of service time, server vacation, and correlated lead time.

\subsection{Research Gap}\label{subsec3}
The stochastic queueing-inventory literature has not yet discussed the asynchronous vacation in the multi-server system. To the best of our knowledge, the idea of offering an asynchronous vacation to the servers in the stochastic inventory system is to be considered a research gap in this domain. 

\subsection{Novelty of the Model}\label{subsec4}
This paper introduces the asynchronous multiple vacation offer to the servers in a multi-server queueing-inventory system (MSQIS). In addition, the servers can do three different jobs: 1. busy; 2. idle; and 3. vacation, according to the situation that occurs in MSQIS. Further, the combination of asynchronous vacation and $(s, Q)$ ordering principle in the MSQIS is also a new attempt.

\subsection{Contribution of the Model}\label{subsec5}
The contribution of the paper is described as follows: 
\begin{enumerate}
	\item The considered queueing-inventory system admits asynchronous multiple vacations to its servers when the respective situation occurs.   
	\item The asynchronous multiple vacation policy is to be applied in the retrial queueing-inventory system. 
	\item The multi-server vacation queueing-inventory system performs well with the $(s, Q)$ continuous review and reorder setup. 
	\item This paper finds the stability condition and stationary probability vector using the Neuts and Rao matrix geometric approximation method. 
	\item This paper explains how to calculate the expected total cost and fraction time of a successful retrial rate for Orbit customers.   
	\item Further, it investigates the proposed model with some numerical elucidations.
\end{enumerate}
This paper is put together in the following order: notations and model formulation, model analysis and solutions, system performance measures, cost analysis, numerical investigations, and conclusions. 
\section{Notations and Model Formulation}\label{mod}	
\subsection{Notations}{
	$\begin{array}{lcl}
		{0^*, Q^*}&:&\text {All servers are in vacation mode when the items are $0$ and $Q$ respectively }\\
		{\bf 0}&:&\text {A matrix containing all entries are zero}\\
		{\bf e}&:&\text {A column vector of appropriate dimensions}\\
		&&\text {with each coordinate containing a value of one}\\	
		I&:&\text {An identity matrix}\\	
		\delta_{ij}&:&\left\{ \begin{array}{ll}
			1, & \mbox{if} \ j=i, \\
			0, & \mbox{otherwise}
		\end{array}\right.\\
		H(x)&:&\left\{ \begin{array}{ll}
			1, & \mbox{if} \ x\geq 0, \\
			0, & \mbox{otherwise}
		\end{array}\right.\\
		\lambda&:& \text{Arrival rate of new customer}\\
		\theta&:& \text{The rate at which a customer in orbit attempts to retry}\\
		\mu&:& \text{The rate at which a customer receives service}\\
		\eta&:& \text{Vacation completion rate of a server}\\
		\bar{\delta}_{ij}&:& 1-\delta_{ij} \\
		\lbrack D_{l}\rbrack_{x,y}&:&\left\{ \begin{array}{ll}
			p\lambda, & \mbox{if} \ x=y=l, \\
			0, & \mbox{otherwise~~~~~~~~~~~~~ where} ~D_l~ \text{is a matrix of order}~l
		\end{array}\right.\\
		\lbrack E_{l}\rbrack_{x,y}&:&\left\{ \begin{array}{ll}
			0, & \mbox{if} \ x=y=l, \\
			\iota_{1}\theta, & \mbox{otherwise~~~~~~~~~~~~~ where} ~E_l~ \text{ is a matrix of order}~l
		\end{array}\right.\\
		P_1(t)&:&\text{The number of customers in orbit at time t}\\
		P_2(t)&:&\text{The level of inventory at time t}\\
		P_3(t)&:&\text{The number of servers on vacation at time t}\\
		P_4(t)&:&\text{The number of servers busy with customers at time t.}\\
		P_5(t)&:&\text{The number of idle servers at the time of t}\\
		P_6(t)&:&\text{The number of customers in the waiting hall  at time t}\\
		ch &:& \text{Holding cost per unit item}\\
		cs &:& \text{Set up cost per order}\\
		co &:& \text{Waiting cost for a customer in the orbit per unit time}\\
		cw &:&\text{Waiting cost for a customer in the queue per unit time}\\
		cl &:&\text{Lost Cost per customer in the queue per unit time}
	\end{array}
	$
}

\begin{Def}[Asynchronous Vacation]
	In the system, each server can take or terminate vacation individually; this is called asynchronous vacation.
\end{Def}
\subsection{Model Formulation}\label{subsec7}
The considered queueing-inventory system explores the performance of a multi-server service facility with asynchronous multiple vacations.  Initially, the system has a maximum of $S$ units of an item in the inventory storage space, there is a finite waiting hall of size $N$  and a maximum of $c$ identical servers who can provide service to the customer at an instant. Upon the completion of a customer's service, one item will be deducted from the inventory storage space.

Depending on the circumstances that arise in the system, servers can perform three different tasks. The following is a list of such jobs: 1. being preoccupied with the customer to close a sale; 2. sitting around in the system; and 3. taking a break from the system. 
\begin{enumerate}
	\item[a)]{\textbf{Occurrence of Servers Busy Period:}} The customer in the queue requires that a number of servers begin to attend if there is a sufficient number of items in the storage space equal to the number of customers in the waiting hall, and vice versa.
	\item[b)]{\textbf{Occurrence of Servers Idle Period:}}
	\begin{itemize}
		\item The server goes to an idle state at the end of each service if the number of customers (except servicing customers) in the queue is insufficient for the number of items (except servicing items) in the storage space, and vice versa.
		\item The server goes to idle state at end of the each vacation period if either of the customer level or stock level is insufficient.
	\end{itemize} 
	\item[c)]{\textbf{Occurrence of Servers Vacation Period:}}
	\begin{itemize}
		\item The server goes into vacation state at the end of each service if both customer and stock levels in the system (except servicing customers and servicing items) are zero.
		\item At the end of each vacation, if the server finds that both customer and stock levels in the system (except servicing customers and servicing items) are zero, then he takes one more vacation.
	\end{itemize} 
	
\end{enumerate} 
Customers follow a Poisson distribution when arriving at the waiting hall, with the rate of arrival determined by $\lambda$. If the waiting room is at full capacity, a new customer either enters an infinite capacity orbit with probability $p$, or leaves the system permanently with probability $1-p$. This orbit is a virtual waiting area designed to minimize customer loss in the system. Under the classical retrial policy, any customer from the orbit can enter the waiting hall if the number of customers in the waiting hall is less than $N$. The retrial rate of a customer from the orbit is denoted as $\theta$, and the inter-arrival time between two successful retrial arrivals from the orbit is assumed to be exponentially distributed.\\

The service time is distributed exponentially and the service rate is defined by the parameter $\mu$. When the system's current stock level falls to a reorder level of $s$, a reorder of $Q=(S-s)$ items is triggered at that epoch. This kind of reorder process is said to be an $(s, Q)$ replenishment principle. Lead time follows an exponential distribution with the rate $\beta$. The graphical representation is illustrated in Figure \eqref{fig:r2}.
\begin{figure}[h!]

	\tikzset{every picture/.style={line width=0.75pt}}   
	\begin{tikzpicture}[x=0.75pt,y=0.75pt,yscale=-1,xscale=1]
		
		\draw  [draw opacity=0] (472,133) -- (522,133) -- (522,383) -- (472,383) -- cycle ; \draw    ; \draw   (472,183) -- (522,183)(472,233) -- (522,233)(472,283) -- (522,283) ; \draw   (472,133) -- (522,133) -- (522,383) -- (472,383) -- cycle ;
		%Straight Lines [id:da4711580143988767] 
		\draw    (472,333) -- (522,333) ;
		%Straight Lines [id:da13348543164183457] 
		\draw    (148,364) -- (285,364) ;
		\draw [shift={(287,364)}, rotate = 180] [color={rgb, 255:red, 0; green, 0; blue, 0 }  ][line width=0.75]    (10.93,-3.29) .. controls (6.95,-1.4) and (3.31,-0.3) .. (0,0) .. controls (3.31,0.3) and (6.95,1.4) .. (10.93,3.29)   ;
		%Shape: Grid [id:dp8023914189228112] 
		\draw  [draw opacity=0] (211.53,279.63) -- (211.47,229.63) -- (461.47,229.37) -- (461.53,279.37) -- cycle ; \draw    ; \draw   (261.53,279.58) -- (261.47,229.58)(311.53,279.53) -- (311.47,229.53)(361.53,279.47) -- (361.47,229.47)(411.53,279.42) -- (411.47,229.42) ; \draw   (211.53,279.63) -- (211.47,229.63) -- (461.47,229.37) -- (461.53,279.37) -- cycle ;
		%Straight Lines [id:da15422806982168802] 
		\draw    (411.53,279.42) -- (411.47,229.42) ;
		
		%Shape: Rectangle [id:dp2573997350042958] 
		\draw   (81,189) -- (168,189) -- (168,237) -- (81,237) -- cycle ;
		%Straight Lines [id:da656407231373481] 
		\draw    (522,270) -- (605,270) ;
		\draw [shift={(607,270)}, rotate = 180] [color={rgb, 255:red, 0; green, 0; blue, 0 }  ][line width=0.75]    (10.93,-3.29) .. controls (6.95,-1.4) and (3.31,-0.3) .. (0,0) .. controls (3.31,0.3) and (6.95,1.4) .. (10.93,3.29)   ;
		%Straight Lines [id:da036202773761701446] 
		\draw    (498,72.5) -- (498,112) ;
		\draw [shift={(498,114)}, rotate = 270] [color={rgb, 255:red, 0; green, 0; blue, 0 }  ][line width=0.75]    (10.93,-3.29) .. controls (6.95,-1.4) and (3.31,-0.3) .. (0,0) .. controls (3.31,0.3) and (6.95,1.4) .. (10.93,3.29)   ;
		%Snip Round Single Corner Rect [id:dp9400838994913423] 
		\draw   (467.5,35.9) .. controls (467.5,30.98) and (471.48,27) .. (476.4,27) -- (566.6,27) -- (575.5,35.9) -- (575.5,71.5) -- (467.5,71.5) -- cycle ;
		%Straight Lines [id:da36723705005443374] 
		\draw    (168,259) -- (209,259) ;
		\draw [shift={(211,259)}, rotate = 180] [color={rgb, 255:red, 0; green, 0; blue, 0 }  ][line width=0.75]    (10.93,-3.29) .. controls (6.95,-1.4) and (3.31,-0.3) .. (0,0) .. controls (3.31,0.3) and (6.95,1.4) .. (10.93,3.29)   ;
		%Straight Lines [id:da7476604067716333] 
		\draw    (461,253) -- (471,253) ;
		%Straight Lines [id:da7625632445901522] 
		\draw    (2,216) -- (78,216) ;
		\draw [shift={(80,216)}, rotate = 180] [color={rgb, 255:red, 0; green, 0; blue, 0 }  ][line width=0.75]    (10.93,-3.29) .. controls (6.95,-1.4) and (3.31,-0.3) .. (0,0) .. controls (3.31,0.3) and (6.95,1.4) .. (10.93,3.29)   ;
		%Shape: Rectangle [id:dp724244567205218] 
		\draw   (81,237) -- (168,237) -- (168,277) -- (81,277) -- cycle ;
		%Straight Lines [id:da6386779369187783] 
		\draw    (125.5,238) -- (125.5,278) ;
		%Shape: Tear Drop [id:dp08561897976937494] 
		\draw   (122.12,454.71) .. controls (122.12,454.71) and (122.12,454.71) .. (122.12,454.71) .. controls (144,474.71) and (143.44,507.5) .. (120.87,527.96) .. controls (98.3,548.42) and (62.26,548.8) .. (40.38,528.8) .. controls (18.5,508.81) and (19.06,476.02) .. (41.64,455.56) .. controls (68.88,430.86) and (82.5,418.51) .. (82.5,418.51) .. controls (82.5,418.51) and (95.71,430.58) .. (122.12,454.71) -- cycle ;
		%Shape: Trapezoid [id:dp32520424070306797] 
		\draw   (56,378) -- (76.4,310) -- (131.6,310) -- (152,378) -- cycle ;
		%Straight Lines [id:da9373200728435034] 
		\draw    (62,352) -- (145,352) ;
		%Straight Lines [id:da9378956913370784] 
		\draw    (103.5,352) -- (104,377) ;
		%Straight Lines [id:da01656497819986802] 
		\draw    (82,378) -- (82,416) ;
		\draw [shift={(82,418)}, rotate = 270] [color={rgb, 255:red, 0; green, 0; blue, 0 }  ][line width=0.75]    (10.93,-3.29) .. controls (6.95,-1.4) and (3.31,-0.3) .. (0,0) .. controls (3.31,0.3) and (6.95,1.4) .. (10.93,3.29)   ;
		
		%Straight Lines [id:da384435174485684] 
		\draw    (439.89,539.9) -- (548.34,540.83) ;
		%Shape: Regular Polygon [id:dp7340576971074657] 
		\draw   (494.47,500.97) -- (548.34,540.26) -- (527.09,604.03) -- (460.08,604.15) -- (439.92,540.45) -- cycle ;
		%Straight Lines [id:da7149401225749392] 
		\draw    (494.45,500.97) -- (494.45,540.97) ;
		%Shape: Chord [id:dp30999210649820363] 
		\draw   (227.55,83.28) .. controls (225.41,78.41) and (224.08,73.07) .. (223.72,67.44) .. controls (222.08,41.56) and (241.73,19.1) .. (267.62,17.29) .. controls (293.51,15.47) and (315.83,34.98) .. (317.47,60.87) .. controls (317.98,68.91) and (316.44,76.62) .. (313.29,83.48) -- cycle ;
		%Straight Lines [id:da5564306332666187] 
		\draw    (308,35) -- (468,35) ;
		%Straight Lines [id:da9204288919149983] 
		\draw    (318,65) -- (468.5,65) ;
		\draw   (373,59) -- (383.5,65.5) -- (373,72) ;
		\draw   (379.23,42.21) -- (369,35.29) -- (379.76,29.22) ;
		%Straight Lines [id:da8515700939100632] 
		\draw    (43.71,452.68) -- (43.5,220) ;
		\draw [shift={(43.5,218)}, rotate = 89.95] [color={rgb, 255:red, 0; green, 0; blue, 0 }  ][line width=0.75]    (10.93,-3.29) .. controls (6.95,-1.4) and (3.31,-0.3) .. (0,0) .. controls (3.31,0.3) and (6.95,1.4) .. (10.93,3.29)   ;
		%Shape: Plaque [id:dp8591408557342219] 
		\draw   (173.5,498.8) .. controls (181.12,498.8) and (187.3,492.62) .. (187.3,485) -- (294.7,485) .. controls (294.7,492.62) and (300.88,498.8) .. (308.5,498.8) -- (308.5,540.2) .. controls (300.88,540.2) and (294.7,546.38) .. (294.7,554) -- (187.3,554) .. controls (187.3,546.38) and (181.12,540.2) .. (173.5,540.2) -- cycle ;
		%Straight Lines [id:da8391594696513471] 
		\draw    (296.7,547) -- (433.7,547) ;
		\draw [shift={(435.7,547)}, rotate = 180] [color={rgb, 255:red, 0; green, 0; blue, 0 }  ][line width=0.75]    (10.93,-3.29) .. controls (6.95,-1.4) and (3.31,-0.3) .. (0,0) .. controls (3.31,0.3) and (6.95,1.4) .. (10.93,3.29)   ;
		%Straight Lines [id:da729387964869018] 
		\draw    (464,523) -- (312,523) ;
		\draw [shift={(310,523)}, rotate = 360] [color={rgb, 255:red, 0; green, 0; blue, 0 }  ][line width=0.75]    (10.93,-3.29) .. controls (6.95,-1.4) and (3.31,-0.3) .. (0,0) .. controls (3.31,0.3) and (6.95,1.4) .. (10.93,3.29)   ;
		
		%Straight Lines [id:da021539495250627372] 
		\draw    (98,278) -- (98,306) ;
		\draw [shift={(98,308)}, rotate = 270] [color={rgb, 255:red, 0; green, 0; blue, 0 }  ][line width=0.75]    (10.93,-3.29) .. controls (6.95,-1.4) and (3.31,-0.3) .. (0,0) .. controls (3.31,0.3) and (6.95,1.4) .. (10.93,3.29)   ;
		%Straight Lines [id:da6500932560371162] 
		\draw    (484,384) -- (484,502) ;
		\draw [shift={(484,504)}, rotate = 270] [color={rgb, 255:red, 0; green, 0; blue, 0 }  ][line width=0.75]    (10.93,-3.29) .. controls (6.95,-1.4) and (3.31,-0.3) .. (0,0) .. controls (3.31,0.3) and (6.95,1.4) .. (10.93,3.29)   ;
		%Straight Lines [id:da5710115664453461] 
		\draw    (506,509) -- (506,384) ;
		\draw [shift={(506,382)}, rotate = 90] [color={rgb, 255:red, 0; green, 0; blue, 0 }  ][line width=0.75]    (10.93,-3.29) .. controls (6.95,-1.4) and (3.31,-0.3) .. (0,0) .. controls (3.31,0.3) and (6.95,1.4) .. (10.93,3.29)   ;

		% Text Node
		\draw (490,305) node [anchor=north west][inner sep=0.75pt]   [align=left] {{\huge .}};
		% Text Node
		\draw (490,319) node [anchor=north west][inner sep=0.75pt]   [align=left] {{\huge .}};
		% Text Node
		\draw (490,294) node [anchor=north west][inner sep=0.75pt]   [align=left] {{\huge .}};
		% Text Node
		\draw (283.51,259.54) node [anchor=north west][inner sep=0.75pt]  [rotate=-269.9] [align=left] {{\huge .}};
		% Text Node
		\draw (297.51,259.51) node [anchor=north west][inner sep=0.75pt]  [rotate=-269.9] [align=left] {{\huge .}};
		% Text Node
		\draw (272.51,259.56) node [anchor=north west][inner sep=0.75pt]  [rotate=-269.9] [align=left] {{\huge .}};
		% Text Node
		\draw (500.51,51.54) node [anchor=north west][inner sep=0.75pt]  [rotate=-269.9] [align=left] {{\huge .}};
		% Text Node
		\draw (514.51,51.51) node [anchor=north west][inner sep=0.75pt]  [rotate=-269.9] [align=left] {{\huge .}};
		% Text Node
		\draw (489.51,51.56) node [anchor=north west][inner sep=0.75pt]  [rotate=-269.9] [align=left] {{\huge .}};
		% Text Node
		\draw (493,5) node [anchor=north west][inner sep=0.75pt]   [align=left] {Inventory};
		% Text Node
		\draw (304,202) node [anchor=north west][inner sep=0.75pt]   [align=left] {Waiting Hall};
		% Text Node
		\draw (34,192.4) node [anchor=north west][inner sep=0.75pt]    {$\lambda $};
		% Text Node
		\draw (14,319.4) node [anchor=north west][inner sep=0.75pt]    {$\iota _{1} \theta $};
		% Text Node
		\draw (471,115) node [anchor=north west][inner sep=0.75pt]   [align=left] {Servers};
		% Text Node
		\draw (492,150.4) node [anchor=north west][inner sep=0.75pt]    {$1$};
		% Text Node
		\draw (491,199.4) node [anchor=north west][inner sep=0.75pt]    {$2$};
		% Text Node
		\draw (543,248.4) node [anchor=north west][inner sep=0.75pt]    {$\iota _{3} \mu $};
		% Text Node
		\draw (87,384.4) node [anchor=north west][inner sep=0.75pt]    {$p\lambda $};
		% Text Node
		\draw (186,339.4) node [anchor=north west][inner sep=0.75pt]    {$( 1-p) \lambda $};
		% Text Node
		\draw (213.5,381.25) node   [align=left] {\begin{minipage}[lt]{72.76pt}\setlength\topsep{0pt}
				Customer loss
		\end{minipage}};
		% Text Node
		\draw (490,347.4) node [anchor=north west][inner sep=0.75pt]  [font=\large]  {$c$};
		% Text Node
		\draw (491,249.4) node [anchor=north west][inner sep=0.75pt]    {$3$};
		% Text Node
		\draw (82.28,193.99) node [anchor=north west][inner sep=0.75pt]   [align=left] {Waiting Hall\\ \ \ \ \ is full};
		% Text Node
		\draw (90,249) node [anchor=north west][inner sep=0.75pt]   [align=left] {Yes};
		% Text Node
		\draw (134,250) node [anchor=north west][inner sep=0.75pt]   [align=left] {No};
		% Text Node
		\draw (44,477) node [anchor=north west][inner sep=0.75pt]   [align=left] {Infinite orbit};
		% Text Node
		\draw (78.4,313) node [anchor=north west][inner sep=0.75pt]   [align=left] {Decide \\to orbit};
		% Text Node
		\draw (69,356) node [anchor=north west][inner sep=0.75pt]   [align=left] {Yes};
		% Text Node
		\draw (116,354) node [anchor=north west][inner sep=0.75pt]   [align=left] {No};
		% Text Node
		\draw (379,244.4) node [anchor=north west][inner sep=0.75pt]    {$1$};
		% Text Node
		\draw (429,244.4) node [anchor=north west][inner sep=0.75pt]    {$0$};
		% Text Node
		\draw (327,243.4) node [anchor=north west][inner sep=0.75pt]    {$2$};
		% Text Node
		\draw (227,244.4) node [anchor=north west][inner sep=0.75pt]    {$N$};
		% Text Node
		\draw (456,544) node [anchor=north west][inner sep=0.75pt]   [align=left] { Possibility\\ \ to take a\\ \ Vacation};
		% Text Node
		\draw (466,519) node [anchor=north west][inner sep=0.75pt]   [align=left] {Yes};
		% Text Node
		\draw (496.45,519.97) node [anchor=north west][inner sep=0.75pt]   [align=left] {No};
		% Text Node
		\draw (516.35,513.91) node [anchor=north west][inner sep=0.75pt]  [rotate=-270.6] [align=left] {Stay in the system};
		% Text Node
		\draw (346,503) node [anchor=north west][inner sep=0.75pt]   [align=left] {Go for vacation};
		% Text Node
		\draw (296.69,550.01) node [anchor=north west][inner sep=0.75pt]  [rotate=-0.19] [align=left] {Vacation completion};
		% Text Node
		\draw (312,6.4) node [anchor=north west][inner sep=0.75pt]    {$( s,Q)$};
		% Text Node
		\draw (355,7) node [anchor=north west][inner sep=0.75pt]   [align=left] {ordering policy};
		% Text Node
		\draw (243,35) node [anchor=north west][inner sep=0.75pt]   [align=left] {Supplier\\Station};
		% Text Node
		\draw (385,45.4) node [anchor=north west][inner sep=0.75pt]    {$\beta $};
		% Text Node
		\draw (182.5,501.8) node [anchor=north west][inner sep=0.75pt]   [align=left] {Number of servers\\ is on vacation};
		% Text Node
		\draw (358,529.4) node [anchor=north west][inner sep=0.75pt]    {$\eta $};
		% Text Node
		\draw (553,39.4) node [anchor=north west][inner sep=0.75pt]    {$0$};
		% Text Node
		\draw (533,39.4) node [anchor=north west][inner sep=0.75pt]    {$1$};
		% Text Node
		\draw (469.5,39.3) node [anchor=north west][inner sep=0.75pt]    {$S$};

	\end{tikzpicture}
	\caption{\textbf{Graphical depiction of the model.}\label{fig:r2}}
\end{figure}
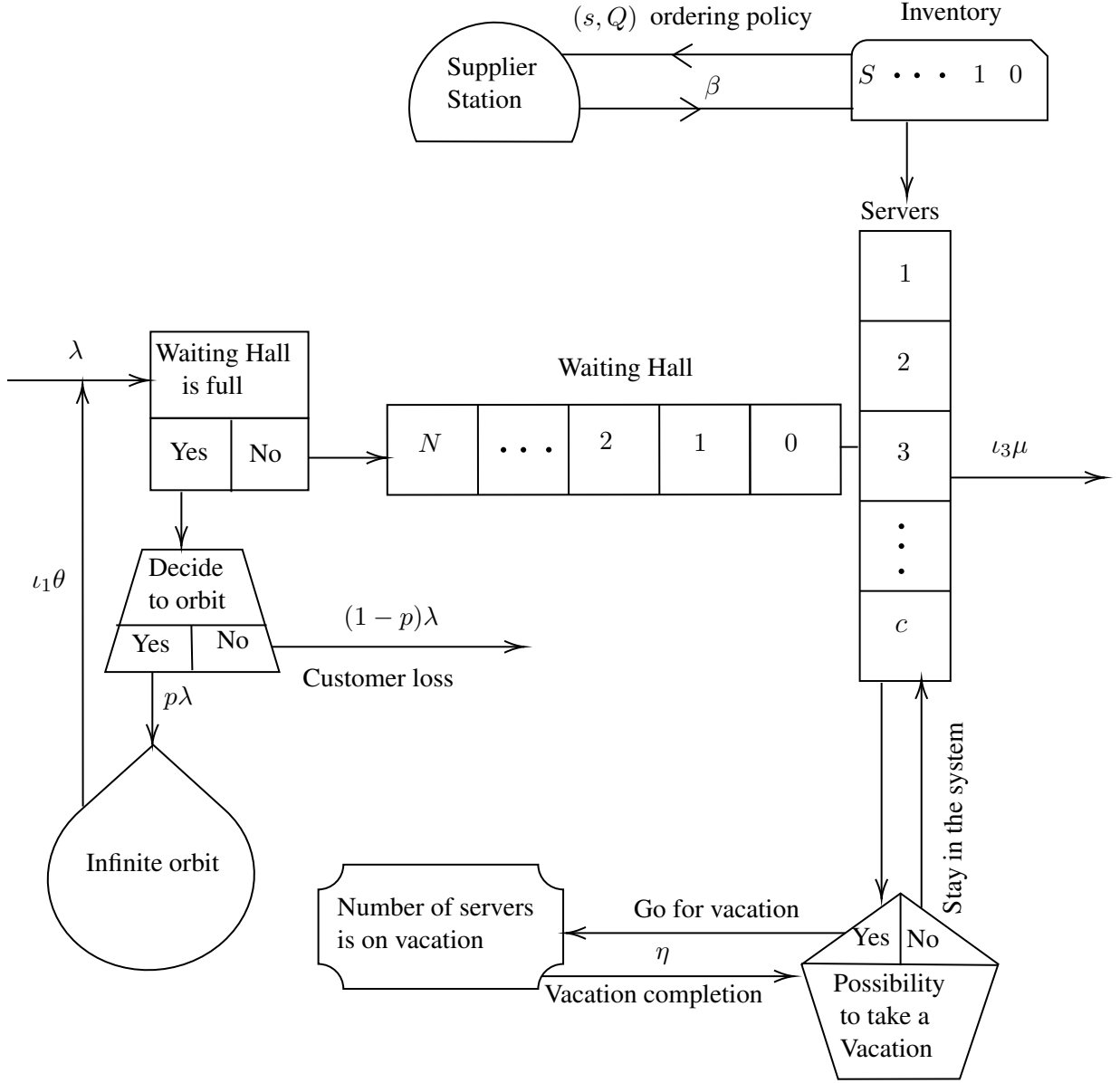
\section{Analysis and Solutions of the Model}\label{mai}
Consider a six-dimensional Markov process $$\{P(t)=(P_1(t), P_2(t),P_3(t),P_4(t),P_5(t),P_6(t)),~ t \geq 0\}$$ with state space	
$	\Omega=\bigcup\limits_{i=1}^{8}E_i$
\begin{adjustwidth}{-1cm}{1cm}
	$	E_1=\left\{(0\leq \iota_{1}<\infty,~0,~c-x_1,~0,~x_{1},~x_1 \leq \iota_{6}\leq N) ~| ~0\leq x_{1}\leq c \right\}\\
	E_2=\left\{(0\leq \iota_{1}<\infty,~x_{2}+1,~c-(j+1),~0\leq \iota_{4}\leq j,~j+1-\iota_{4},~\iota_{4})  ~|~ 1\leq x_{2}\leq c-2;0\leq j\leq x_2-1\right \}\\
	E_3=\left\{(0\leq \iota_{1}<\infty,~x_{2}+1,~c-(j+1),~j+1,~0,~j+1\leq \iota_{6}\leq N) ~|~ 1\leq x_{2}\leq c-2;0\leq j\leq x_2-1 \right\}\\
	E_4=\left\{(0\leq \iota_{1}<\infty,~j,~c-x_3,~0\leq \iota_{4}\leq x_3-1,~x_3-\iota_{4},~\iota_{4})~|~1\leq x_{3}\leq c-1;c\leq j\leq S \right\}\\
	E_5=\left\{(0\leq \iota_{1}<\infty,~j,~c-x_3,~x_3,~0,~x_3\leq  \iota_{6} \leq N )~|~1\leq x_{3}\leq c-1;c\leq j\leq S \right\}\\
	E_6=\left\{(0\leq \iota_{1}<\infty,~Q,~c,~0,~0,~0\leq  \iota_{6} \leq N) \right\}\\
	E_7=\left\{(0\leq \iota_{1}<\infty,~x_4+1,~c-(x_4+1),~0\leq \iota_{4}\leq x_4,~x_4+1-\iota_{4},~\iota_{4})~|~ 0\leq x_{4}\leq c-1\right\}\\
	E_8=\left\{(0\leq \iota_{1}<\infty,~x_5,~c-x_5,~x_5,~0,~x_5\leq\iota_{6}\leq N)~|~ 1\leq x_{5}\leq c\right\}\\
	E_9=\left\{(0\leq \iota_{1}<\infty,~x_6,~c-(x_6+j),~x_6,~j,~(x_6+j)\leq  \iota_{6} \leq N ) ~|~ 1\leq x_{6}\leq c-1 ;1\leq j\leq c-x_6\right\}\\
	E_{10}=\left\{(0\leq \iota_{1}<\infty,~x_7,~0,~0\leq \iota_{4}\leq c-1,~c-\iota_{4},~\iota_{4})~|~  c+1\leq x_{6}\leq S\right\}\\
	E_{11}=\left\{(0\leq \iota_{1}<\infty,~x_7,~0,~c,~0,~c\leq  \iota_{6} \leq N)~|~ c+1\leq x_{7}\leq S\right\}$
\end{adjustwidth}

\subsection{Construction of Infinitesimal Generator Matrix}
The process  $\{P(t), t \geq 0\}$  is called a $Q B D$ process and its infinitesimal generator matrix is given by

\begin{eqnarray}\label{n1}
	H & = & \left(
	\begin{array}{cccccccccccccccccccccc}
		{\mathbb{H}_{01}} & \mathbb{H}_{0} & \mathbf{0} & \mathbf{0} & \mathbf{0} &\cdots \\
		\mathbb{H}_{10} & \mathbb{H}_{11} & \mathbb{H}_{0} & \mathbf{0} &  \mathbf{0} & \cdots \\
		\mathbf{0} & \mathbb{H}_{20} & \mathbb{H}_{21} & \mathbb{H}_{0} & \mathbf{0} & \cdots\\
		\vdots & \vdots & \vdots & \vdots & \vdots& \ddots \\
	\end{array}\right),
\end{eqnarray}
where all the matrices in the above infinitesimal generator matrix $H$ have dimension $$Sc(N+1)+(c+2)N-(2c^3+3c^2-5c-12)(1/6)$$ and all the sub-matrices are defined as follows:

\begin{enumerate}
	\item The non-zero upper diagonal matrix, $\mathbb{H}_{0}$ describes the transitions of the new arrival to the orbit if the waiting hall is full. 
	\begin{eqnarray}\label{z1}
		\mathbb{H}_{0} & = & diag(\Bbbk_{0^*},\Bbbk_{Q^*},\Bbbk_{0},\Bbbk_{1},\cdots,\Bbbk_{c-1},\Bbbk_{c},\Bbbk_{c},\cdots,\Bbbk_{c})\end{eqnarray}
	where 
	\begin{eqnarray}\label{z2}
		\Bbbk_{j} = diag(D_{N-(c-1)},D_{N-(c-2)},\cdots,D_{N-j},D_{N+1},D_{N+1},\cdots,D_{N+1}),~j=0,1,\cdots,c
	\end{eqnarray}
	is a sub-block matrix of order $c$.
	\item The non-zero lower diagonal matrix, $\mathbb{H}_{\iota_10}$ denotes the transitions of orbit customers enter the waiting hall.
	\begin{eqnarray}\label{z3}
		\text{For}  ~~ \iota_1=1, 2, \cdots\nonumber\\
		\mathbb{H}_{\iota_10} & = & diag(\mathbb{W}_{0^*},\mathbb{W}_{Q^*},\mathbb{W}_{0},\mathbb{W}_{1},\cdots,\mathbb{W}_{c-1},\mathbb{W}_{c},\mathbb{W}_{c},\cdots,\mathbb{W}_{c})
	\end{eqnarray}
	where 
	\begin{eqnarray}\label{z4}
		\mathbb{W}_{j} = diag(E_{N-(c-1)},E_{N-(c-2)},\cdots,E_{N-j},E_{N+1},E_{N+1},\cdots,E_{N+1}),~j=0,1,\cdots,c
	\end{eqnarray}
	is a sub-block matrix of order $c$.
	\item The diagonal matrix, $\mathbb{H}_{\iota_11}$ describes the transitions of primary arrival, service process, reorder process, server taking a vacation and vacation completion process.
	\begin{eqnarray}
		\text{For}  ~~ \iota_1=0, 1, 2, \cdots\nonumber\\
		\left[\mathbb{H}_{\iota_{1}1}\right]_{\iota_{2},\iota_{2}^{'}} & = & \left\{ \begin{array}{llll}
			\mathbb{J}_{\iota_{2}^{'}} &  \iota_{2}^{'}=0^*& \iota_{2}=1\\
			\mathbb{J}_{\iota_{2}^{'}}  &  \iota_{2}^{'}=Q^*,0& \iota_{2}=0^*\\
			\mathbb{J}_{\iota_{2}^{'}}  &  \iota_{2}^{'}=Q& \iota_{2}=Q^*\\	
			\mathbb{F}_{\iota_{2}}& \iota_{2}^{'}=\iota_{2}-1 & \iota_{2}\in 1,2,\cdots,c\\
			\mathbb{F}_{c+1} & \iota_{2}^{'}=\iota_{2}-1 & \iota_{2}\in c+1,c+2,\cdots,S\\	
			\mathbb{L}_{\iota_{2}}& \iota_{2}^{'}=\iota_{2}+Q & \iota_{2}\in 0,1,2,\cdots,c\\
			\mathbb{L}_{c}& \iota_{2}^{'}=\iota_{2}+Q & \iota_{2}\in c+1,c+2,\cdots,s\\
			\mathbb{G}_{\iota_{2}^{'}} & \iota_{2}^{'}=\iota_{2} & \iota_{2}\in 0^*,Q^*,0,1,\cdots,c-1\\
			\mathbb{G}_c & \iota_{2}^{'}=\iota_{2} & \iota_{2}\in c,c+1,\cdots,S\\
			0, &\mbox{otherwise}
		\end{array}\right.\\*[0.15cm]\nonumber
	\end{eqnarray}
	where
	\begin{eqnarray}\label{z5}
		\mathbb{J}_{0^*} & = & \left\{ \begin{array}{llllll}
			\iota_4 \mu	&\iota_3=c-1 & \iota_4=1&\iota_5=0&\iota_6=1,2,\cdots,N\\
			&\iota_{3}^{'}=c & \iota_{4}^{'}=0&\iota_{5}^{'}=\iota_5&\iota_{6}^{'}=\iota_{6}-1 \\
			0, & \mbox{otherwise,}
		\end{array}\right.\\*[0.25cm]\nonumber
	\end{eqnarray}
	\begin{eqnarray}\label{z6}
		\mathbb{J}_{Q^*} & = & \left\{ \begin{array}{llllll}
			\beta	&\iota_3=c & \iota_4=0&\iota_5=0&\iota_6=0,1,2,\cdots,N\\
			&\iota_{3}^{'}=c & \iota_{4}^{'}=\iota_4&\iota_{5}^{'}=\iota_5&\iota_{6}^{'}=\iota_{6}\\
			0, & \mbox{otherwise,}
		\end{array}\right.\\*[0.25cm]\nonumber
	\end{eqnarray}
	\begin{eqnarray}\label{z7}
		\mathbb{J}_{0} & = & \left\{ \begin{array}{llllll}
			\eta	&\iota_3=c & \iota_4=0&\iota_5=0&\iota_6=1,2,\cdots,N\\
			&\iota_{3}^{'}=c-1 & \iota_{4}^{'}=\iota_4&\iota_{5}^{'}=1&\iota_{6}^{'}=\iota_{6}\\
			0, & \mbox{otherwise,}
		\end{array}\right.\\*[0.25cm]\nonumber
	\end{eqnarray}
	\begin{eqnarray}\label{z8}
		\mathbb{J}_{Q} & = & \left\{ \begin{array}{llllll}
			\eta&\iota_3=c & \iota_4=0&\iota_5=0&\iota_6=0,1,2,\cdots,N\\
			&\iota_{3}^{'}=c-1 & \iota_{4}^{'}=1-\iota_{5}^{'}&\iota_{5}^{'}=\delta_{\iota_{6}^{'}0}&\iota_{6}^{'}=\iota_{6}\\
			0, & \mbox{otherwise,}
		\end{array}\right.\\*[0.25cm]\nonumber
	\end{eqnarray}
	For $\iota_{2}\in 1,2,\cdots,c-1$
	\begin{adjustwidth}{-3cm}{1cm}
		\begin{eqnarray}\label{z9}\nonumber
			\mathbb{F}_{\iota_{2} }& = & \left\{ \begin{array}{lllllll}
				\iota_{4}\mu	&\iota_3=0,\cdots,c-(\iota_{2}+1) & \iota_4=\iota_{2}&\iota_5=c-(\iota_{3}+	\iota_{4})&\iota_6=	\iota_{4}+	\iota_{5}\\
				&\iota_{3}^{'}=\iota_3+1&\iota_{4}^{'}=\iota_{4}-1&\iota_{5}^{'}=\iota_{5}&\iota_{6}^{'}=\iota_{6}-1\\
				\iota_{4}\mu	&\iota_3=0,\cdots,c-(\iota_{2}+1) & \iota_4=\iota_{2}&\iota_5=c-(\iota_{3}+	\iota_{4})&\iota_6=	\iota_{4}+	\iota_{5}+1,\cdots,N\\
				&\iota_{3}^{'}=\iota_3&\iota_{4}^{'}=\iota_{4}-1&\iota_{5}^{'}=\iota_{5}+1&\iota_{6}^{'}=\iota_{6}-1\\	
				\bar{\delta}_{\iota_{2}1}\iota_{4}\mu&\iota_3=c-\iota_{2} ,\cdots,c-2 & \iota_4=1,\cdots,c-\iota_{3}&\iota_5=c-(\iota_{3}+\iota_{4})&\iota_6=	\iota_{4}\\
				&\iota_{3}^{'}=\iota_3+1&\iota_{4}^{'}=\iota_{4}-1&\iota_{5}^{'}=\iota_{5}&\iota_{6}^{'}=\iota_{6}-1\\
				\bar{\delta}_{\iota_{2}1}\iota_{4}\mu&\iota_3=c-\iota_{2} ,\cdots,c-2 & \iota_4=\iota_{2}&\iota_5=0&\iota_6=c-\iota_{3},\cdots,N\\
				&\iota_{3}^{'}=\iota_3&\iota_{4}^{'}=\iota_{4}-1&\iota_{5}^{'}=\iota_{5}+1&\iota_{6}^{'}=\iota_{6}-1\\	
				\iota_{4}\mu&\iota_3=c-1 & \iota_4=1&\iota_5=0&\iota_6=1,2,\cdots,N\\
				&\iota_{3}^{'}=\iota_3&\iota_{4}^{'}=\delta_{\iota_{6}^{'}2}&\iota_{5}^{'}=\delta_{\iota_{6}^{'}1}&\iota_{6}^{'}=\iota_{6}-1\\
				0, & \mbox{otherwise,}
			\end{array}\right.\\*[0.25cm]
		\end{eqnarray}
		\begin{eqnarray}\label{z10}\nonumber
			\mathbb{F}_{c }& = & \left\{ \begin{array}{lllllll}
				\iota_{4}\mu	&\iota_3=0& \iota_4=1,\cdots,c&\iota_5=c-\iota_{4}&\iota_6=	\iota_{4}\\
				&\iota_{3}^{'}=1&\iota_{4}^{'}=\iota_{4}-1&\iota_{5}^{'}=\iota_{5}-1&\iota_{6}^{'}=\iota_{6}-1\\
				\iota_{4}\mu	&\iota_3=0& \iota_4=c&\iota_5=0&\iota_6=c+1,\cdots,N\\
				&\iota_{3}^{'}=\iota_3&\iota_{4}^{'}=\iota_{4}-1&\iota_{5}^{'}=1&\iota_{6}^{'}=\iota_{6}-1\\	
				\iota_{4}\mu&\iota_3=1,\cdots,c-1 & \iota_4=1,\cdots,c-\iota_{3}&\iota_5=c-(\iota_{3}+\iota_{4})&\iota_6=\iota_{4}\\
				&\iota_{3}^{'}=\iota_3&\iota_{4}^{'}=\iota_{4}-1&\iota_{5}^{'}=\iota_{5}+1&\iota_{6}^{'}=\iota_{6}-1\\	
				\iota_{4}\mu&\iota_3=1,\cdots,c-1 & \iota_4=c-\iota_{3}&\iota_5=0&\iota_6=\iota_{4}+1,\cdots,N\\
				&\iota_{3}^{'}=\iota_3&\iota_{4}^{'}=\iota_{4}&\iota_{5}^{'}=\iota_{5}&\iota_{6}^{'}=\iota_{6}-1\\
				0, & \mbox{otherwise,}
			\end{array}\right.\\*[0.25cm]
		\end{eqnarray}
		For $\iota_{2}\in c+1,c+2,\cdots,S$
		\begin{eqnarray}\label{z11}\nonumber
			\mathbb{F}_{c+1 }& = & \left\{ \begin{array}{lllllll}
				\iota_{4}\mu&\iota_3=0,1,\cdots,c-1 & \iota_4=1,\cdots,c-\iota_{3}&\iota_5=c-(\iota_{3}+\iota_{4})&\iota_6=\iota_{4}\\
				&\iota_{3}^{'}=\iota_3&\iota_{4}^{'}=\iota_{4}-1&\iota_{5}^{'}=\iota_{5}+1&\iota_{6}^{'}=\iota_{6}-1\\	
				\iota_{4}\mu&\iota_3=0,1,\cdots,c-1 & \iota_4=c-\iota_{3}&\iota_5=0&\iota_6=\iota_{4}+1,\cdots,N\\
				&\iota_{3}^{'}=\iota_3&\iota_{4}^{'}=\iota_{4}&\iota_{5}^{'}=\iota_{5}&\iota_{6}^{'}=\iota_{6}-1\\
				0, & \mbox{otherwise,}
			\end{array}\right.\\*[0.25cm]
		\end{eqnarray}
		For $\iota_{2}\in 0,1,\cdots,c-1$
		%\begin{adjustwidth}{-1.5cm}{-1cm}
		\begin{eqnarray}\label{z12}\nonumber
			\mathbb{L}_{\iota_{2}}& = & \left\{ \begin{array}{lllllll}
				\beta	&\iota_3=0,\cdots,c-(\iota_{2}+1)& \iota_4=\iota_2&\iota_5=c-(\iota_{3}+\iota_{4})&\iota_6=\iota_{4}+	\iota_{5},\cdots,N\\
				&\iota_{3}^{'}=\iota_{3}&\iota_{4}^{'}=c-\iota_{3}^{'}&\iota_{5}^{'}=0&\iota_{6}^{'}=\iota_{6}\\
				\bar{\delta}_{\iota_{2}0}\beta	&\iota_3=c-\iota_{2},\cdots,c-1& \iota_4=0,\cdots,c-\iota_{3}&\iota_5=c-(\iota_{3}+\iota_{4})&\iota_6=\iota_{4}\\
				&\iota_{3}^{'}=\iota_3&\iota_{4}^{'}=\iota_{4}&\iota_{5}^{'}=\iota_{5}&\iota_{6}^{'}=\iota_{6}\\	
				\bar{\delta}_{\iota_{2}0}\beta&\iota_3=c-\iota_{2},\cdots,c-1 & \iota_4=\iota_{2}&\iota_5=0&\iota_6=c-\iota_{3}+1,\cdots,N\\
				&\iota_{3}^{'}=\iota_3&\iota_{4}^{'}=\iota_{4}&\iota_{5}^{'}=\iota_{5}&\iota_{6}^{'}=\iota_{6}\\	
				0, & \mbox{otherwise,}
			\end{array}\right.\\*[0.25cm]
		\end{eqnarray}
		%\end{adjustwidth}
		\begin{eqnarray}\label{z13}\nonumber
			\mathbb{L}_{c}& = & \left\{ \begin{array}{lllllll}
				\beta	&\iota_3=0,\cdots,c-1& \iota_4=0,\cdots,c-\iota_{3}&\iota_5=c-(\iota_{3}+\iota_{4})&\iota_6=\iota_{4}\\
				&\iota_{3}^{'}=\iota_3&\iota_{4}^{'}=\iota_{4}&\iota_{5}^{'}=\iota_{5}&\iota_{6}^{'}=\iota_{6}\\	
				\beta&\iota_3=0,\cdots,c-1 & \iota_4=c-\iota_{3}&\iota_5=0&\iota_6=\iota_{4}+1,\cdots,N\\
				&\iota_{3}^{'}=\iota_3&\iota_{4}^{'}=\iota_{4}&\iota_{5}^{'}=\iota_{5}&\iota_{6}^{'}=\iota_{6}\\	
				0, & \mbox{otherwise,}
			\end{array}\right.\\*[0.25cm]
		\end{eqnarray}
		For $\iota_{2}\in 0^*,Q^*$
		\begin{eqnarray}\label{z14}\nonumber
			\mathbb{G}_{\iota_{2}} & = & \left\{ \begin{array}{llllll}
				-(\iota_{1}\theta+\bar{\delta}_{\iota_{6}N}\lambda+\delta_{\iota_{2}0^*}~\beta+\delta_{\iota_{2}0^*}\bar{\delta}_{\iota_{6}N}~\eta &&&&\\
				+\delta_{\iota_{2}Q^*}~\eta+\delta_{\iota_{6}N}~p\lambda)	&\iota_3=c & \iota_4=0&\iota_5=0&\iota_6=0,1,\cdots,N\\
				&\iota_{3}^{'}=c & \iota_{4}^{'}=\iota_4&\iota_{5}^{'}=\iota_5&\iota_{6}^{'}=\iota_{6} \\
				\lambda	&\iota_3=c & \iota_4=0&\iota_5=0&\iota_6=0,1,\cdots,N-1\\
				&\iota_{3}^{'}=c & \iota_{4}^{'}=\iota_4&\iota_{5}^{'}=\iota_5&\iota_{6}^{'}=\iota_{6}+1 \\
				0, & \mbox{otherwise,}
			\end{array}\right.\\*[0.25cm]
		\end{eqnarray}
		For $\iota_{2}\in 0,1,\cdots,c-1$
		\begin{adjustwidth}{-.3cm}{-1cm}
			\begin{eqnarray}\label{z15}\nonumber
				\mathbb{G}_{\iota_{2} } =  \left\{ \begin{array}{lllllll}
					\lambda	&0\leq \iota_3\leq c-(\iota_{2}+1) & \iota_4=\iota_{2}&\iota_5=c-(\iota_{3}+	\iota_{4})&\iota_{4}+	\iota_{5}\leq \iota_6\leq N-1\\
					&\iota_{3}^{'}=\iota_3&\iota_{4}^{'}=\iota_{4}&\iota_{5}^{'}=\iota_{5}&\iota_{6}^{'}=\iota_{6}+1\\
					\bar{\delta}_{\iota_{2}0}\lambda	&c-\iota_{2}\leq \iota_3\leq c-1 & 0\leq \iota_4\leq c-\iota_{3}&\iota_5=c-(\iota_{3}+	\iota_{4})&\iota_6=	\iota_{4}\\
					&\iota_{3}^{'}=\iota_3&\iota_{4}^{'}=\iota_{4}&\iota_{5}^{'}=\iota_{5}&\iota_{6}^{'}=\iota_{6}+1\\	
					\bar{\delta}_{\iota_{2}0}\lambda  &c-\iota_{2}\leq \iota_3\leq c-1 & \iota_4=\iota_{2}&\iota_5=0&c-\iota_{3}+1\leq \iota_6\leq N-1\\
					&\iota_{3}^{'}=\iota_3&\iota_{4}^{'}=\iota_{4}&\iota_{5}^{'}=\iota_{5}&\iota_{6}^{'}=\iota_{6}+1\\
					-(\iota_{4}\mu&&&&\\
					+\bar{\delta}_{\iota_{6}N}(\lambda+\iota_{1}\theta)&&&&\\
					+\bar{\delta}_{\iota_{3}0}\bar{\delta}_{\iota_{6}\iota_{5}}\eta &&&&\\+\delta_{\iota_{6}N}~p\lambda)   &0\leq \iota_3\leq c-(\iota_{2}+1) & \iota_4=\iota_{2}&\iota_5=c-(\iota_{3}+	\iota_{4})&\iota_{4}+\iota_{5}\leq \iota_6\leq N\\
					&\iota_{3}^{'}=\iota_3&\iota_{4}^{'}=\iota_{4}&\iota_{5}^{'}=\iota_{5}&\iota_{6}^{'}=\iota_{6}\\			
					-\bar{\delta}_{\iota_{2}0}(\iota_{4}\mu+&&&&\\
					(\lambda+\iota_{1}\theta)+&&&&\\
					\bar{\delta}_{\iota_{2}(c-1)}\bar{\delta}_{\iota_{3}(c-1)}\eta &&&&\\+\delta_{\iota_{6}N}~p\lambda)   &c-\iota_{2}\leq \iota_3\leq c-1 & 0\leq \iota_4\leq c-\iota_{3}&\iota_5=c-(\iota_{3}+	\iota_{4})&\iota_{6}=\iota_{4}\\
					&\iota_{3}^{'}=\iota_3&\iota_{4}^{'}=\iota_{4}&\iota_{5}^{'}=\iota_{5}&\iota_{6}^{'}=\iota_{6}\\			
					-\bar{\delta}_{\iota_{2}0}(\iota_{4}\mu+&&&&\\
					\bar{\delta}_{\iota_{6}N}	(\lambda+\iota_{1}\theta)&&&&\\
					+\eta+\delta_{\iota_{6}N}~p\lambda)   &c-\iota_{2}\leq \iota_3\leq c-1 &  \iota_4=\iota_{2}&\iota_5=0&\iota_{2}+1\leq \iota_6\leq N\\
					&\iota_{3}^{'}=\iota_3&\iota_{4}^{'}=\iota_{4}&\iota_{5}^{'}=\iota_{5}&\iota_{6}^{'}=\iota_{6}\\	
					0, & \mbox{otherwise.}
				\end{array}\right.\\*[0.25cm]
			\end{eqnarray}
		\end{adjustwidth}
		For $\iota_{2}\in c,c+1,\cdots,S$
	%\begin{adjustwidth}{-2cm}{-1cm}
	\begin{eqnarray}\label{z16}\nonumber
		\mathbb{G}_{c} =  \left\{ \begin{array}{lllllll}		
			-(\iota_{4}\mu+\delta_{\iota_{3}0}~\eta&&&&\\
			+\lambda+\iota_{1}\theta)   &0\leq \iota_3\leq c-1 & 0\leq \iota_4\leq c-\iota_{3}&\iota_5=c-(\iota_{3}+	\iota_{4})& \iota_6=\iota_4\\
			&\iota_{3}^{'}=\iota_3&\iota_{4}^{'}=\iota_{4}&\iota_{5}^{'}=\iota_{5}&\iota_{6}^{'}=\iota_{6}\\			
			-(\iota_{4}\mu+\eta&&&&\\
			+\bar{\delta}_{\iota_{6}N}(\lambda+\iota_{1}\theta)&&&&\\+\delta_{\iota_{6}N}~p\lambda)   &0\leq \iota_3\leq c-1&  \iota_4=c-\iota_{3}&\iota_5=0&c-\iota_{3}+1\leq \iota_6\leq N\\
			&\iota_{3}^{'}=\iota_3&\iota_{4}^{'}=\iota_{4}&\iota_{5}^{'}=\iota_{5}&\iota_{6}^{'}=\iota_{6}\\			
			\lambda	&0\leq \iota_3\leq c-1&  0\leq \iota_4\leq c-\iota_{3}&\iota_5=c-(\iota_{3}+	\iota_{4})&\iota_6=\iota_4\\
			&\iota_{3}^{'}=\iota_3&\iota_{4}^{'}=\iota_{4}&\iota_{5}^{'}=\iota_{5}&\iota_{6}^{'}=\iota_{6}+1\\
			\lambda	&0\leq \iota_3\leq c-1& \iota_4= c-\iota_{3}&\iota_5=0&c-\iota_{3}+1\leq \iota_6\leq N-1\\
			&\iota_{3}^{'}=\iota_3&\iota_{4}^{'}=\iota_{4}&\iota_{5}^{'}=\iota_{5}&\iota_{6}^{'}=\iota_{6}+1\\	
			0, & \mbox{otherwise,}
		\end{array}\right.\\*[0.25cm]
	\end{eqnarray}
	\end{adjustwidth}
\end{enumerate}
The ergodicity of the Markov chain $P(t)$ is assumed to  $\rho=\dfrac{\lambda}{c\mu}<1\text{(See ~\cite{arpo})}$.
%	\begin{rem}
	%		From the normalising condition of the infinitesimal generator matrix $H$, that is, $$ {\bf \Phi} H=0~~~ \text{and}~~~ {\bf\Phi e}=1$$
	%		one can obtain the solution of ${\bf \Phi}$'s  using Matrix-Geometric approximation.
	%	\end{rem}
%	$\\iota$

\subsection{Neuts and Rao Matrix Geometric Approximation}
\indent \indent As the assumed Markov chain $\{P(t),t\geq 0\}$ has the structure of a level-dependent QBD process, it is hard to find the steady-state probability vector analytically. So we truncate the orbit level at the point $M$. The truncation point is defined as the point $M$, where $M \geq 1$ in which the system changes itself from level-dependent QBD to level-independent QBD when $\iota_1<M$ and  $\iota_1\geq M$, respectively. More explicitly, by setting $\mathbb{H}_{\iota_10} = \mathbb{H}_{M0}$ and $\mathbb{H}_{\iota_1 1}=\mathbb{H}_{M1}$ for all $\iota_1 \geq M$, we determine the equilibrium for the considered system. The choice of the truncated value $M$ is determined according to Marcel F Neuts and B M Rao \cite{Neu1} and Srinivas R Chakravarthy et.al \cite{Chak21}.

The determination of the modified infinitesimal generator structure of $H$ is based on the Neuts and Rao matrix geometric approximation method \cite{Neu1} and it is determined as follows:

\begin{eqnarray*}
	\hat{H} & = & \left(
	\begin{array}{cccccccccccccccccccccc}
		{\mathbb{H}_{01}} & \mathbb{H}_{0} & \mathbf{0} & \mathbf{0} & \mathbf{0} &\cdots & \mathbf{0} &\mathbf{0} &\mathbf{0} &\mathbf{0} &\mathbf{0} & \cdots\\
		\mathbb{H}_{10} & \mathbb{H}_{11} & \mathbb{H}_{0} & \mathbf{0} &  \mathbf{0} & \cdots & \mathbf{0} &\mathbf{0} &\mathbf{0} &\mathbf{0} &\mathbf{0} &\cdots\\
		\mathbf{0} & \mathbb{H}_{20} & \mathbb{H}_{21} & \mathbb{H}_{0} & \mathbf{0} & \cdots&\mathbf{0} &\mathbf{0} &\mathbf{0} &\mathbf{0} &\mathbf{0} &\cdots\\
		\vdots & \vdots & \vdots & \vdots & \vdots & \ddots&\vdots&\vdots&\vdots&\vdots&\vdots&\ddots\\
		\mathbf{0} & \mathbf{0} & \mathbf{0} &  \mathbf{0} & \mathbf{0} &  \cdots & \mathbb{H}_{M0} & \mathbb{H}_{M1} & \mathbb{H}_{0} & \mathbf{0} & \mathbf{0} & \cdots\\
		\mathbf{0} & \mathbf{0} &  \mathbf{0} & \mathbf{0} &  \mathbf{0} & \cdots & \mathbf{0} & \mathbb{H}_{M0} & \mathbb{H}_{M1} & \mathbb{H}_{0} & \mathbf{0} & \cdots\\
		\vdots & \vdots & \vdots & \vdots & \vdots & \ddots&\vdots&\vdots&\vdots&\vdots&\vdots&\ddots\\
	\end{array}\right),
\end{eqnarray*}
%	Now, we shall define, $\mathbb{H}_M=\mathbb{H}_{M1} + \mathbb{H}_{M1} + \mathbb{H}_{0}$
\begin{thm}		
	The probability vector $\phi$ in steady-state, which is related to the finite generator matrix $\mathbb{H}_{M}$ at truncation point $M$, can be obtained from the following equation: $\mathbb{H}_{M} \phi = 0$, where $\mathbb{H}_{M}=\mathbb{H}_{M0} + \mathbb{H}_{M1} + \mathbb{H}_{0}$ and the vector $\phi$ as converted as follows:
	\begin{eqnarray}\label{w1}
		{\phi^{( \iota_2)}} & = & {\phi^{(Q)}} {\nabla_{ \iota_2}}, \quad  \iota_2 = 0^*,Q^*,0,1, \ldots, S.
	\end{eqnarray}
	where
	\begin{adjustwidth}{-2cm}{1cm}
		\begin{eqnarray*}
			\nabla_{ \iota_2}  = \left\{ \begin{array}{lll}
				(-1)^{Q} 	(\mathbb{F}_{c+1}\mathbb{	\hat{G}}_c^{-1})^{Q-c}\left(\prod\limits_{i=1}^{c-1}\mathbb{F}_{i+1}	\mathbb{\hat{G}}_{i}^{-1}\right) \mathbb{J}_{\iota_{2}}\mathbb{\hat{G}}_{\iota_{2}}^{-1}& \iota_{2}=0^*\\
				(-1)^{Q+1} 	(\mathbb{F}_{c+1}\mathbb{\hat{G}}_c^{-1})^{Q-c}\left(\prod\limits_{i=1}^{c-1}\mathbb{F}_{i+1}\mathbb{\hat{G}}_{i}^{-1}\right) \mathbb{J}_{0^*}\mathbb{\hat{G}}_{0^*}^{-1} \mathbb{J}_{\iota_{2}}\mathbb{\hat{G}}_{\iota_{2}}^{-1}& \iota_{2}=Q^*\\
				(-1)^{Q} 	(\mathbb{F}_{c+1}\mathbb{\hat{G}}_c^{-1})^{Q-c}\left(\prod\limits_{i=1}^{c-1}\mathbb{F}_{i+1}\mathbb{\hat{G}}_{i}^{-1}\right)\left(\mathbb{F}_{1}G_{0}^{-1}-\mathbb{J}_{0^*}\mathbb{\hat{G}}_{0^*}^{-1}\mathbb{J}_{0}\mathbb{\hat{G}}_{0}^{-1}\right)  & \iota_{2}=0\\	
				(-1)^{Q-\iota_{2}} 	(\mathbb{F}_{c+1}\mathbb{\hat{G}}_c^{-1})^{Q-c}\left(\prod\limits_{i=\iota_{2}}^{c-1}\mathbb{F}_{i+1}\mathbb{\hat{G}}_{i}^{-1}\right)& \iota_{2}\in 1,2,\cdots,c-1\\
				(-1)^{Q-\iota_{2}} 	(\mathbb{F}_{c+1}\mathbb{\hat{G}}_c^{-1})^{Q-\iota_{2}} & \iota_{2}\in c,c+1,\cdots,S\\	
				I & \iota_{2}=Q\\
				(-1)^{Q-(j-1)} 	(\mathbb{F}_{c+1}\mathbb{\hat{G}}_c^{-1})^{Q-s}&\\
				\Biggl[\sum\limits_{ x=j+1}^{c}\left((\mathbb{F}_{c+1}\mathbb{\hat{G}}_c^{-1})^{s-c} \left(\prod\limits_{i=c}^{k}\mathbb{F}_{i}\mathbb{\hat{G}}_{i-1}^{-1}\right)	\mathbb{L}_{k}\mathbb{\hat{G}}_c^{-1}(\mathbb{F}_{c+1}\mathbb{\hat{G}}_c^{-1})^{x-(j+1)}\right)+&\\ \left( \sum\limits_{x=0}^{s-c}(\mathbb{F}_{c+1}\mathbb{\hat{G}}_c^{-1})^{s-c-x}\mathbb{L}_{c}\mathbb{\hat{G}}_c^{-1}(\mathbb{F}_{c+1}\mathbb{\hat{G}}_c^{-1})^{x}\right)(\mathbb{F}_{c+1}\mathbb{\hat{G}}_c^{-1})^{c-j }\Biggr] & \iota_{2}\in Q+j,\\ & j\in 1,2,\cdots,c-1\\
				(-1)^{Q-(j-1)} 	(\mathbb{F}_{c+1}\mathbb{\hat{G}}_c^{-1})^{Q-s}\left\{\sum\limits_{ x=0}^{s-j}(\mathbb{F}_{c+1}\mathbb{\hat{G}}_c^{-1})^{s-j-x} 	\mathbb{L}_{c}\mathbb{\hat{G}}_c^{-1}(\mathbb{F}_{c+1}\mathbb{\hat{G}}_c^{-1})^{x}\right\}& \iota_{2}\in  Q+j, \\& j\in c,c+1,\cdots,S\\
				0, &\mbox{otherwise}
			\end{array}\right.\\*[0.15cm]\nonumber		
		\end{eqnarray*}
	\end{adjustwidth}
	and
	${\phi^{(Q)}}$ is obtained by solving the equation $ \phi^{(Q^*)}\mathbb{J}_{\iota_2}+\phi^{(0)}\mathbb{L}_{0}+	\phi^{(\iota_{2})}\hat{\mathbb{G}}_{c}+	\phi^{(\iota_{2}+1)}\mathbb{F}_{c+1} ={\bf{0}} $ 
	and $ \sum\limits_{ \iota_2=0^*,Q^*} \phi^{( \iota_2)}{\bf e}+\sum\limits_{ \iota_2=0}^{S} \phi^{( \iota_2)}{\bf e}=1$
\end{thm}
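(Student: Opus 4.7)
The plan is to derive the formulas for $\nabla_{\iota_2}$ by solving the stationary equation $\phi\mathbb{H}_{M}=\mathbf{0}$ column by column in the inventory coordinate $\iota_2$, expressing every block $\phi^{(\iota_2)}$ in terms of $\phi^{(Q)}$ by back-substitution. Recall that $\mathbb{H}_M=\mathbb{H}_{M0}+\mathbb{H}_{M1}+\mathbb{H}_{0}$ decomposes, in the $\iota_2$-block sense, into diagonal blocks $\hat{\mathbb{G}}_{\iota_2}$ (the original $\mathbb{G}_{\iota_2}$ absorbed together with the $\iota_2$-diagonal contributions of $\mathbb{H}_{0}$ and $\mathbb{H}_{M0}$ coming from orbit arrivals and retrials), service off-diagonals $\mathbb{F}$ sending $\iota_2\to\iota_2-1$, replenishment off-diagonals $\mathbb{L}$ sending $\iota_2\to\iota_2+Q$, and the sparse $\mathbb{J}$-blocks that link the vacation-only states $1\to 0^{*}$, $0^{*}\to Q^{*}$, $0^{*}\to 0$, $Q^{*}\to Q$.

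First I would sweep the clean middle range $c\le\iota_2\le Q-1$, in which no replenishment can arrive (since $\iota_2-Q<0$) and no $\mathbb{J}$-block terminates. The column-$\iota_2$ balance reduces to $\phi^{(\iota_2)}\hat{\mathbb{G}}_{c}+\phi^{(\iota_2+1)}\mathbb{F}_{c+1}=\mathbf{0}$, iterating to $\phi^{(\iota_2)}=(-1)^{Q-\iota_2}\phi^{(Q)}(\mathbb{F}_{c+1}\hat{\mathbb{G}}_c^{-1})^{Q-\iota_2}$, which is the stated $\nabla_{\iota_2}$ on $\{c,\ldots,Q\}$. Continuing downward into $1\le\iota_2\le c-1$, where the balance becomes $\phi^{(\iota_2)}\hat{\mathbb{G}}_{\iota_2}+\phi^{(\iota_2+1)}\mathbb{F}_{\iota_2+1}=\mathbf{0}$, I would append the product $\prod_{i=\iota_2}^{c-1}\mathbb{F}_{i+1}\hat{\mathbb{G}}_i^{-1}$ to the chain already built. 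The three vacation-only blocks are then handled in the order dictated by their $\mathbb{J}$-dependencies: column $0^{*}$ gives $\phi^{(0^{*})}=-\phi^{(1)}\mathbb{J}_{0^{*}}\hat{\mathbb{G}}_{0^{*}}^{-1}$, column $Q^{*}$ gives $\phi^{(Q^{*})}=-\phi^{(0^{*})}\mathbb{J}_{Q^{*}}\hat{\mathbb{G}}_{Q^{*}}^{-1}$ and column $0$ gives $\phi^{(0)}=-\bigl(\phi^{(0^{*})}\mathbb{J}_{0}+\phi^{(1)}\mathbb{F}_{1}\bigr)\hat{\mathbb{G}}_0^{-1}$; substituting the expressions already obtained yields the bracketed formulas for $\nabla_{0^{*}}$, $\nabla_{Q^{*}}$ and $\nabla_{0}$.

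The crux is the upper range $\iota_2=Q+j,\ 1\le j\le s$. Here the column balance acquires a replenishment term from the source level $j$, reading $\phi^{(j)}\mathbb{L}_{\min(j,c)}+\phi^{(Q+j)}\hat{\mathbb{G}}_{c}+\phi^{(Q+j+1)}\mathbb{F}_{c+1}=\mathbf{0}$ (with the last summand absent at $j=s$). I would run a descending induction on $j$: the base case $\iota_2=S$ gives $\phi^{(S)}=-\phi^{(s)}\mathbb{L}_{c}\hat{\mathbb{G}}_c^{-1}$, and each inductive step grafts a new $\mathbb{L}$-term onto the growing telescoping sum while shifting every previous $\mathbb{L}$-term by an extra factor of $\mathbb{F}_{c+1}\hat{\mathbb{G}}_c^{-1}$. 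In the subrange $c\le j\le s$ the replenishment matrix is always $\mathbb{L}_c$, producing the compact single-sum expression for $\nabla_{Q+j}$. Crossing the threshold $j=c$ forces the replenishment matrix to become $\mathbb{L}_{j}$, and propagating the earlier $\mathbb{L}_c$-contributions through the new product $\prod_{i=c}^{k}\mathbb{F}_{i}\hat{\mathbb{G}}_{i-1}^{-1}$ is precisely what produces the double sum in the $\nabla_{Q+j},\ 1\le j\le c-1,$ formula.

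What finally remains is to pin down $\phi^{(Q)}$ itself: plugging the already-derived $\phi^{(\iota_2)}=\phi^{(Q)}\nabla_{\iota_2}$ into the last unused column balance at $\iota_2=Q$, namely $\phi^{(Q^{*})}\mathbb{J}_{Q}+\phi^{(0)}\mathbb{L}_{0}+\phi^{(Q)}\hat{\mathbb{G}}_{c}+\phi^{(Q+1)}\mathbb{F}_{c+1}=\mathbf{0}$, gives a linear consistency equation for $\phi^{(Q)}$, and the overall scale is fixed by the normalization $\sum_{\iota_2}\phi^{(\iota_2)}\mathbf{e}=1$. The hard part will be the bookkeeping in the upper range — especially verifying that the double-sum formula satisfies every intermediate column balance across the $j=c$ threshold and carries the correct sign $(-1)^{Q-(j-1)}$ — because the replenishment-induced shifts must be carried cleanly through the service-only products $\prod_i\mathbb{F}_{i+1}\hat{\mathbb{G}}_i^{-1}$, whose very structure changes between the $i<c$ and $i\ge c$ regions.
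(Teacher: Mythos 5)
Your proposal is correct and follows essentially the same route as the paper: both derive the block equations from $\phi\,\mathbb{H}_M=\mathbf{0}$ (service terms $\mathbb{F}$ coupling $\iota_2\to\iota_2-1$, replenishment terms $\mathbb{L}$ coupling $\iota_2\to\iota_2+Q$, and the $\mathbb{J}$-links for the vacation levels) and then back-substitute to express every $\phi^{(\iota_2)}$ in terms of $\phi^{(Q)}$, fixing $\phi^{(Q)}$ from the $\iota_2=Q$ balance and the normalization. If anything, your write-up makes the order of elimination and the descending induction across the $j=c$ threshold more explicit than the paper, which simply lists the equations and states that they are solved iteratively.
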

\begin{proof}
	We have
	\begin{eqnarray}
		\phi ~\mathbb{H}_M= {\bf{0}} \ \ \text{and} \ \ \phi ~{\bf {e}}= 1
	\end{eqnarray}
	where
	\begin{eqnarray*}
		\left[\mathbb{H}_{M}\right]_{\iota_{2},\iota_{2}^{'}} & = &  \left\{ \begin{array}{llll}
			\mathbb{J}_{\iota_{2}^{'}} &  \iota_{2}^{'}=0^*& \iota_{2}=1\\
			\mathbb{J}_{\iota_{2}^{'}}  &  \iota_{2}^{'}=Q^*,0& \iota_{2}=0^*\\
			\mathbb{J}_{\iota_{2}^{'}}  &  \iota_{2}^{'}=Q& \iota_{2}=Q^*\\	
			\mathbb{F}_{\iota_{2}}& \iota_{2}^{'}=\iota_{2}-1 & \iota_{2}\in 1,2,\cdots,c\\
			\mathbb{F}_{c+1} & \iota_{2}^{'}=\iota_{2}-1 & \iota_{2}\in c+1,c+2,\cdots,S\\	
			\mathbb{L}_{\iota_{2}}& \iota_{2}^{'}=\iota_{2}+Q & \iota_{2}\in 0,1,2,\cdots,c\\
			\mathbb{L}_{c}& \iota_{2}^{'}=\iota_{2}+Q & \iota_{2}\in c+1,c+2,\cdots,s\\
			\hat{\mathbb{G}}_{\iota_{2}^{'} }& \iota_{2}^{'}=\iota_{2} & \iota_{2}\in 0^*,Q^*,0,1,\cdots,c-1\\
			\hat{\mathbb{G}}_c & \iota_{2}^{'}=\iota_{2} & \iota_{2}\in c,c+1,\cdots,S\\
			0, &\mbox{otherwise}
		\end{array}\right.\\*[0.15cm]\nonumber
	\end{eqnarray*}
where
	For $\iota_{2}\in 0^*,Q^*$
\begin{eqnarray}\nonumber
	\mathbb{\hat{G}}_{\iota_{2}} & = & \left\{ \begin{array}{llllll}
		-(M\theta+\bar{\delta}_{\iota_{6}N}~\lambda+\delta_{\iota_{2}0^*}~\beta&&&&\\+\delta_{\iota_{2}0^*}\bar{\delta}_{\iota_{6}N}~\eta 
		+\delta_{\iota_{2}Q^*}~\eta)	&\iota_3=c & \iota_4=0&\iota_5=0&\iota_6=0,1,\cdots,N\\
		&\iota_{3}^{'}=c & \iota_{4}^{'}=\iota_4&\iota_{5}^{'}=\iota_5&\iota_{6}^{'}=\iota_{6} \\
		\lambda+M\theta	&\iota_3=c & \iota_4=0&\iota_5=0&\iota_6=0,1,\cdots,N-1\\
		&\iota_{3}^{'}=c & \iota_{4}^{'}=\iota_4&\iota_{5}^{'}=\iota_5&\iota_{6}^{'}=\iota_{6}+1 \\
		0, & \mbox{otherwise,}
	\end{array}\right.\\*[0.25cm]
\end{eqnarray}
For $\iota_{2}\in 0,1,\cdots,c-1$
\begin{adjustwidth}{-.8cm}{-1cm}
	\begin{eqnarray}\nonumber
		\mathbb{\hat{G}}_{\iota_{2} } =  \left\{ \begin{array}{lllllll}
			\lambda+M\theta		&0\leq \iota_3\leq c-(\iota_{2}+1) & \iota_4=\iota_{2}&\iota_5=c-(\iota_{3}+	\iota_{4})&\iota_{4}+	\iota_{5}\leq \iota_6\leq N-1\\
			&\iota_{3}^{'}=\iota_3&\iota_{4}^{'}=\iota_{4}&\iota_{5}^{'}=\iota_{5}&\iota_{6}^{'}=\iota_{6}+1\\
			\bar{\delta}_{\iota_{2}0}(	\lambda+M\theta	)	&c-\iota_{2}\leq \iota_3\leq c-1 & 0\leq \iota_4\leq c-\iota_{3}&\iota_5=c-(\iota_{3}+	\iota_{4})&\iota_6=	\iota_{4}\\
			&\iota_{3}^{'}=\iota_3&\iota_{4}^{'}=\iota_{4}&\iota_{5}^{'}=\iota_{5}&\iota_{6}^{'}=\iota_{6}+1\\	
			\bar{\delta}_{\iota_{2}0}(	\lambda+M\theta	)  &c-\iota_{2}\leq \iota_3\leq c-1 & \iota_4=\iota_{2}&\iota_5=0&c-\iota_{3}+1\leq \iota_6\leq N-1\\
			&\iota_{3}^{'}=\iota_3&\iota_{4}^{'}=\iota_{4}&\iota_{5}^{'}=\iota_{5}&\iota_{6}^{'}=\iota_{6}+1\\
			-(\iota_{4}\mu&&&&\\
			+\bar{\delta}_{\iota_{6}N}(\lambda+M\theta)&&&&\\
			+\bar{\delta}_{\iota_{3}0}\bar{\delta}_{\iota_{6}\iota_{5}}\eta)   &0\leq \iota_3\leq c-(\iota_{2}+1) & \iota_4=\iota_{2}&\iota_5=c-(\iota_{3}+	\iota_{4})&\iota_{4}+\iota_{5}\leq \iota_6\leq N\\
			&\iota_{3}^{'}=\iota_3&\iota_{4}^{'}=\iota_{4}&\iota_{5}^{'}=\iota_{5}&\iota_{6}^{'}=\iota_{6}\\	
		\end{array}\right.\\*[0.25cm]
	\end{eqnarray}
\end{adjustwidth}
			\begin{adjustwidth}{-.3cm}{-1cm}
				\begin{eqnarray}\nonumber		
						\mathbb{\hat{G}}_{\iota_{2} } =  \left\{ \begin{array}{lllllll}
			-\bar{\delta}_{\iota_{2}0}(\iota_{4}\mu+&&&&\\
			(\lambda+M\theta)+&&&&\\
			\bar{\delta}_{\iota_{2}(c-1)}\bar{\delta}_{\iota_{3}(c-1)}\eta)   &c-\iota_{2}\leq \iota_3\leq c-1 & 0\leq \iota_4\leq c-\iota_{3}&\iota_5=c-(\iota_{3}+	\iota_{4})&\iota_{6}=\iota_{4}\\
			&\iota_{3}^{'}=\iota_3&\iota_{4}^{'}=\iota_{4}&\iota_{5}^{'}=\iota_{5}&\iota_{6}^{'}=\iota_{6}\\			
			-\bar{\delta}_{\iota_{2}0}(\iota_{4}\mu+&&&&\\
			\bar{\delta}_{\iota_{6}N}	(\lambda+M\theta)&&&&\\
			+\eta)   &c-\iota_{2}\leq \iota_3\leq c-1 &  \iota_4=\iota_{2}&\iota_5=0&\iota_{2}+1\leq \iota_6\leq N\\
			&\iota_{3}^{'}=\iota_3&\iota_{4}^{'}=\iota_{4}&\iota_{5}^{'}=\iota_{5}&\iota_{6}^{'}=\iota_{6}\\	
			0, & \mbox{otherwise.}
		\end{array}\right.\\*[0.25cm]
	\end{eqnarray}
\end{adjustwidth}
For $\iota_{2}\in c,c+1,\cdots,S$
\begin{adjustwidth}{-2cm}{-1cm}
\begin{eqnarray}\nonumber
	\mathbb{\hat{G}}_{c} =  \left\{ \begin{array}{lllllll}		
		-(\iota_{4}\mu+\delta_{\iota_{3}0}~\eta&&&&\\
		+\lambda+M\theta)   &0\leq \iota_3\leq c-1 & 0\leq \iota_4\leq c-\iota_{3}&\iota_5=c-(\iota_{3}+	\iota_{4})& \iota_6=\iota_4\\
		&\iota_{3}^{'}=\iota_3&\iota_{4}^{'}=\iota_{4}&\iota_{5}^{'}=\iota_{5}&\iota_{6}^{'}=\iota_{6}\\			
		-(\iota_{4}\mu+\eta&&&&\\
		+\bar{\delta}_{\iota_{6}N}(\lambda+M\theta))   &0\leq \iota_3\leq c-1&  \iota_4=c-\iota_{3}&\iota_5=0&c-\iota_{3}+1\leq \iota_6\leq N\\
		&\iota_{3}^{'}=\iota_3&\iota_{4}^{'}=\iota_{4}&\iota_{5}^{'}=\iota_{5}&\iota_{6}^{'}=\iota_{6}\\			
		\lambda+M\theta	&0\leq \iota_3\leq c-1&  0\leq \iota_4\leq c-\iota_{3}&\iota_5=c-(\iota_{3}+	\iota_{4})&\iota_6=\iota_4\\
		&\iota_{3}^{'}=\iota_3&\iota_{4}^{'}=\iota_{4}&\iota_{5}^{'}=\iota_{5}&\iota_{6}^{'}=\iota_{6}+1\\
		\lambda+M\theta	&0\leq \iota_3\leq c-1& \iota_4= c-\iota_{3}&\iota_5=0&c-\iota_{3}+1\leq \iota_6\leq N-1\\
		&\iota_{3}^{'}=\iota_3&\iota_{4}^{'}=\iota_{4}&\iota_{5}^{'}=\iota_{5}&\iota_{6}^{'}=\iota_{6}+1\\	
		0, & \mbox{otherwise,}
	\end{array}\right.\\*[0.25cm]
\end{eqnarray}
\end{adjustwidth}
	The equation $	\phi ~\mathbb{H}_M= \bf{0} $ yields the following set of equations:
	\begin{align}
		\phi^{( 0^*)} \hat{\mathbb{G}}_{\iota_2}+\phi^{( 1)}\mathbb{J}_{\iota_2}&={\bf{0}},\iota_2=0^*  \\
		\phi^{( 0^*)} \mathbb{J}_{\iota_2}+\phi^{( Q^*)}\hat{\mathbb{G}}_{\iota_2}&={\bf{0}},\iota_2=Q^*  \\
		\phi^{( 0^*)} \mathbb{J}_{\iota_2}+\phi^{(0)}\hat{\mathbb{G}}_{\iota_2}+	\phi^{(1)}\mathbb{F}_{\iota_2+1}&={\bf{0}},\iota_2=0   \\
		\phi^{(\iota_{2})}\hat{\mathbb{G}}_{\iota_{2}}+\phi^{(\iota_{2}+1)}	\mathbb{F}_{\iota_{2}+1}&={\bf{0}} ,  \iota_2=1,2,\cdots,c\\	
		\phi^{(\iota_{2})}\hat{\mathbb{G}}_{c}+	\Phi^{(\iota_{2}+1)}\mathbb{F}_{c+1}&={\bf{0}} ,  \iota_2=c+1,c+2,\cdots,Q-1\\
		\phi^{(Q^*)}\mathbb{J}_{\iota_2}+\phi^{(0)}\mathbb{L}_{0}+	\phi^{(\iota_{2})}\hat{\mathbb{G}}_{c}+	\phi^{(\iota_{2}+1)}\mathbb{F}_{c+1}&={\bf{0}} ,  \iota_2=Q\\	
		\phi^{( \iota_2-Q)}\mathbb{L}_{\iota_2-Q}+\phi^{( \iota_2)}\hat{\mathbb{G}}_{c}+	\phi^{( \iota_2+1)}\mathbb{F}_{c+1}&={\bf{0}} ,   \iota_2=Q+1,Q+2,\cdots,Q+c-2\\	
		\phi^{( \iota_2-Q)}\mathbb{L}_{c}+\phi^{( \iota_2)}\hat{\mathbb{G}}_{c}+	\phi^{( \iota_2+1)}\mathbb{F}_{c+1}&={\bf{0}} ,   \iota_2=Q+c-1,Q+c-2,\cdots,S-1\\
		\phi^{( \iota_2-Q)}\mathbb{L}_{c}+\phi^{( \iota_2)}\hat{\mathbb{G}}_{c}&={\bf{0}} ,   \iota_2=S
	\end{align}
	We arrive at the stated solution by iteratively solving the aforementioned system of equations with its normalization condition $\phi ~\mathbf {e} = 1$.
\end{proof} 
It should be noted that the associated state space of the modified infinitesimal generator matrix $\hat{H}$ is the same as $H$. However the ergodicity condition of the original Markov chain $\lambda < c\mu$ does not hold for the truncated Markov chain. Because the retrial rate is constant from the level $M$ up. So following the general theory for QBD process, the ergodicity condition of the truncated Markov chain is derived as given in the Theorem \ref{t1}.
\begin{thm}\label{t1}
	The stability condition for the modified infinitesimal generator matrix, $\hat{H}$ is given by the inequality 
	\begin{equation}\label{e1}
		z_1 p\lambda <z_2 M\theta
	\end{equation}
	\begin{adjustwidth}{-1cm}{1cm}
		\begin{align*}
			\text{where}~~ z_1&=\phi^{( 0,c,0,0,N)}+\phi^{(Q,c,0,0,N)}+\sum\limits_{ \iota_2=0}^{c-1}\sum\limits_{ \iota_3=0}^{c-(\iota_2+1)}\phi^{(\iota_2, \iota_3, \iota_2,c-( \iota_2+ \iota_3),N)}\\&+\sum\limits_{ \iota_2=1}^{c-1}\sum\limits_{ \iota_3=c-\iota_2}^{c-1}\phi^{( \iota_2, \iota_3, c-\iota_3,0,N)}+\sum\limits_{ \iota_2=c}^{S}\sum\limits_{ \iota_3=0}^{c-1}\sum\limits_{ \iota_4=0}^{c-\iota_3}\phi^{( \iota_2,\iota_3,\iota_4,c-( \iota_3+ \iota_4),N)}\\&\text{and}~ z_2=\sum\limits_{ \iota_6=0}^{N-1}\phi^{( 0,c,0,0,\iota_6)}+\sum\limits_{ \iota_6=0}^{N-1}\phi^{( Q,c,0,0,\iota_6)}+\sum\limits_{ \iota_2=0}^{c-1}\sum\limits_{ \iota_3=0}^{ c-(\iota_2+1)}\sum\limits_{ \iota_6=c-\iota_3}^{N-1}\phi^{( \iota_2, \iota_3, \iota_2,c-( \iota_2+ \iota_3),\iota_6)}\\&+\sum\limits_{ \iota_2=0}^{c-1}\sum\limits_{ \iota_3=c-\iota_{2}}^{ c-1}\sum\limits_{ \iota_4=0}^{c-\iota_{3}}\phi^{( \iota_2, \iota_3, \iota_4,c-( \iota_3+ \iota_4),\iota_4)}+\sum\limits_{ \iota_2=0}^{c-1}\sum\limits_{ \iota_3=c-\iota_{2}}^{ c-1}\sum\limits_{ \iota_6=c-\iota_3+1}^{N-1}\phi^{( \iota_2, \iota_3, \iota_2,0,\iota_6)}\\&+\sum\limits_{ \iota_2=c}^{S}\sum\limits_{ \iota_3=0}^{ c-1}\sum\limits_{ \iota_4=0}^{c-\iota_{3}}\phi^{(\iota_2, \iota_3, \iota_4,c-( \iota_3+ \iota_4),\iota_4)}+\sum\limits_{ \iota_2=c}^{S}\sum\limits_{ \iota_3=0}^{c-1}\sum\limits_{ \iota_6=\iota_4+1}^{N-1}\phi^{(\iota_2, \iota_3, c- \iota_3,0,\iota_6)}.
		\end{align*}
	\end{adjustwidth}
	
\end{thm}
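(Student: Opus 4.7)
The plan is to invoke the standard mean-drift criterion of Neuts for a level-independent QBD. By the construction of $\hat{H}$, the generator is level-independent for all $\iota_1 \geq M$, with repeating upper, diagonal, and lower blocks $\mathbb{H}_0$, $\mathbb{H}_{M1}$, $\mathbb{H}_{M0}$, respectively. Writing $\mathbb{H}_M = \mathbb{H}_0 + \mathbb{H}_{M1} + \mathbb{H}_{M0}$, the criterion asserts that $\hat{H}$ is positive recurrent if and only if
\[
\phi\, \mathbb{H}_0\, \mathbf{e} \;<\; \phi\, \mathbb{H}_{M0}\, \mathbf{e},
\]
where $\phi$ is precisely the stationary vector of $\mathbb{H}_M$ obtained in the preceding theorem. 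The whole task is therefore to evaluate the two scalar quantities on each side and match them against $p\lambda\, z_1$ and $M\theta\, z_2$, respectively.

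Next I would unpack $\mathbb{H}_0 \mathbf{e}$ using its block-diagonal decomposition $\mathbb{H}_0 = \mathrm{diag}(\Bbbk_{0^*}, \Bbbk_{Q^*}, \Bbbk_0, \ldots, \Bbbk_c)$ together with the sub-block description of each $\Bbbk_j$ built from the elementary matrices $D_l$. Since $[D_l]_{x,y} = p\lambda$ only when $x = y = l$, the row-sum vector $D_l\, \mathbf{e}$ has its single non-zero entry in the last coordinate, which in our encoding corresponds to the waiting hall being full ($\iota_6 = N$). Assembling these contributions across every block identifies exactly the configurations $(\iota_2, \iota_3, \iota_4, \iota_5, N)$ at which $\mathbb{H}_0\, \mathbf{e}$ is non-zero, and listing them component by component over $E_1, \ldots, E_{11}$ reproduces the sum $z_1$, giving $\phi\, \mathbb{H}_0\, \mathbf{e} = p\lambda\, z_1$.

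The dual computation for $\phi\, \mathbb{H}_{M0}\, \mathbf{e}$ follows the same route, using $\mathbb{H}_{M0} = \mathrm{diag}(\mathbb{W}_{0^*}, \mathbb{W}_{Q^*}, \mathbb{W}_0, \ldots, \mathbb{W}_c)$ together with the elementary blocks $E_l$, whose active entries equal $\iota_1 \theta = M\theta$ along the level-independent tail. The row-sum vector $E_l\, \mathbf{e}$ carries the rate $M\theta$ precisely at those positions from which an orbital retrial can still move a customer into the waiting hall, that is, at the states with $\iota_6 < N$ (and with the correct combination of busy/idle/vacation servers). A term-by-term enumeration over the same state-space components, distinguishing the sub-cases by the legal values of $(\iota_3, \iota_4, \iota_5)$ inside each of $E_1, \ldots, E_{11}$, then produces the second expression, yielding $\phi\, \mathbb{H}_{M0}\, \mathbf{e} = M\theta\, z_2$. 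Substituting both evaluations into the Neuts inequality delivers $z_1\, p\lambda < z_2\, M\theta$, as required.

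The main obstacle is not conceptual but combinatorial: accurately listing, within each of the eleven components of $\Omega$, which states satisfy $\iota_6 = N$ (for $z_1$) and which satisfy $\iota_6 < N$ together with the admissible ranges of $(\iota_3, \iota_4, \iota_5)$ (for $z_2$). Getting the index boundaries right, especially at the transitional inventory values $\iota_2 = 0^*, Q^*, 0, c-1, c, S$ where the sub-block sizes inside $\Bbbk_j$ and $\mathbb{W}_j$ switch between $D_{N-(c-1)}, \ldots, D_{N-j}$ and the $D_{N+1}$ (respectively $E$) pieces, is the delicate step; once the enumeration matches the definitions of $z_1$ and $z_2$ term-for-term, the stated inequality follows immediately from Neuts's criterion.
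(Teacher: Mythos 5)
Your proposal is correct and follows essentially the same route as the paper: both invoke the Neuts drift criterion $\phi\,\mathbb{H}_{0}\,\mathbf{e} < \phi\,\mathbb{H}_{M0}\,\mathbf{e}$ for the stationary vector $\phi$ of $\mathbb{H}_{M}$ and then evaluate the two sides as $p\lambda\,z_1$ and $M\theta\,z_2$ by enumerating the states with $\iota_6=N$ and $\iota_6<N$, respectively. Your additional remarks on unpacking the block-diagonal structure of $\mathbb{H}_0$ and $\mathbb{H}_{M0}$ via the elementary blocks $D_l$ and $E_l$ simply make explicit the bookkeeping the paper carries out implicitly.
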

\begin{proof}
	According to Marcel F Neuts \cite{Neu}, the steady-state probability vector $\Phi$ for the modified infinitesimal generator matrix exists if and only if the following inequality is satisfied:
	$$\phi \mathbb{
		H}_{0} \textbf{e} < \phi \mathbb{H}_{M0} \textbf{e}.$$ 
	
	Now, writing all the matrices and a stationary vector to the generator matrix $\mathbb{H}_M$ is given by	
	\begin{align*}
		&p\lambda\phi^{(0,c,0,0,N)}+p\lambda \phi^{(Q,c,0,0,N)}+\sum\limits_{ \iota_2=0}^{c-1}\sum\limits_{ \iota_3=0}^{c-(\iota_2+1)}p\lambda \phi^{(\iota_2, \iota_3, \iota_2,c-( \iota_2+ \iota_3),N)}\\&+\sum\limits_{ \iota_2=1}^{c-1}\sum\limits_{ \iota_3=c-\iota_2}^{c-1}p\lambda \phi^{( \iota_2, \iota_3, c-\iota_3,0,N)}
		+\sum\limits_{ \iota_2=c}^{S}\sum\limits_{ \iota_3=0}^{c-1}\sum\limits_{ \iota_4=0}^{c-\iota_3}p\lambda \phi^{( \iota_2,\iota_3,\iota_4,c-( \iota_3+ \iota_4),N)} \\&<\sum\limits_{ \iota_6=0}^{N-1}M\theta\phi^{( 0,c,0,0,\iota_6)}+\sum\limits_{ \iota_6=0}^{N-1}M\theta \Phi^{(M, Q,c,0,0,\iota_6)}\\&+\sum\limits_{ \iota_2=0}^{c-1}\sum\limits_{ \iota_3=0}^{ c-(\iota_2+1)}\sum\limits_{ \iota_6=c-\iota_3}^{N-1}M\theta \phi^{(\iota_2, \iota_3, \iota_2,c-( \iota_2+ \iota_3),\iota_6)}+\sum\limits_{ \iota_2=0}^{c-1}\sum\limits_{ \iota_3=c-\iota_{2}}^{ c-1}\sum\limits_{ \iota_4=0}^{c-\iota_{3}}M\theta \phi^{( \iota_2, \iota_3, \iota_4,c-( \iota_3+ \iota_4),\iota_4)}\\&+\sum\limits_{ \iota_2=0}^{c-1}\sum\limits_{ \iota_3=c-\iota_{2}}^{ c-1}\sum\limits_{ \iota_6=c-\iota_3+1}^{N-1}M\theta \phi^{(\iota_2, \iota_3, \iota_2,0,\iota_6)}+\sum\limits_{ \iota_2=c}^{S}\sum\limits_{ \iota_3=0}^{ c-1}\sum\limits_{ \iota_4=0}^{c-\iota_{3}}M\theta \phi^{( \iota_2, \iota_3, \iota_4,c-( \iota_3+ \iota_4),\iota_4)}\\&+\sum\limits_{ \iota_2=c}^{S}\sum\limits_{ \iota_3=0}^{c-1}\sum\limits_{ \iota_6=\iota_4+1}^{N-1}M\theta\phi^{( \iota_2, \iota_3, c- \iota_3,0,\iota_6)}.
	\end{align*}
	Hence, we get the desired inequality \eqref{e1}.
\end{proof}
\subsection{Limiting Probability Distribution}
\indent \indent The generator matrix $H$ has a steady state probability vector $\mathbf{\Phi} = (\Phi^{(0)}, \Phi^{(1)}, \Phi^{(2)}, \cdots)$, and it satisfies the stability condition. Thus, the Markov process $${(P_1(t), P_2(t), P_3(t), P_4(t), P_5(t), P_6(t)), t \geq 0}$$ with state space $\Omega$ is regular. Consequently, the limiting probability distribution is given by:
\begin{eqnarray*}
	{\Phi}^{(\iota_1,\iota_2,\iota_3,\iota_4,\iota_5,\iota_6)} = \lim_{t \rightarrow \infty} Pr\left[P_1(t) = \iota_1, P_2(t) = \iota_2, P_3(t)=\iota_3, P_4(t) = \iota_4, P_5(t) = \iota_5, P_6(t)=\iota_6~|\right. \\ P_1(0)=0, P_2(0)=0, \left.P_3(0)=0,P_4(0)=0, P_5(0)=0, P_6(0)=0 \right].
\end{eqnarray*}
exists and is independent of the initial state.
\subsection{Computation of R-Matrix}
%After the truncation point $M$, the stationary probability vector $\Phi$ to the structure of $\mathbb{R}$ matrix depend on $\mathbb{H}_{0}$ matrix, where $\mathbb{R}$ is the minimal non-negative
%solution of the matrix quadratic equation 
%	
\begin{thm}\label{t10}
	Let the matrix  $\mathbb{R}$ can be determined by the matrix quadratic equation
	\begin{eqnarray}\label{e5}
		\mathbb{R}^{2}\mathbb{H}_{M0}+\mathbb{R}\mathbb{H}_{M1}+\mathbb{H}_{0} =\textbf{0}.
	\end{eqnarray} 
	The square matrix $\mathbb{R}$ is of dimension $Sc(N + 1) +(c+2)N-\dfrac{2c^3+3c^2-5c-12}{6}$ and is defined by
	\begin{align} \label{e6}
		\mathbb{R}=\bordermatrix{& 0^* & Q^* & 0& 1&\cdots & S-1 & S \cr
			0^*& \mathfrak{A}^{(0^*,0^*)} & \mathfrak{A}^{(0^*,Q^* )} & \mathfrak{A}^{(0^*,0)}  & \mathfrak{A}^{(0^*,1)} &\cdots & \mathfrak{A}^{(0^*,S-1)} &\mathfrak{A}^{(0^*,S)} \cr
			Q^*  & \mathfrak{A}^{(Q^*,0^*)} & \mathfrak{A}^{(Q^*,Q^*) } &\mathfrak{A}^{(Q^*,0)}  & \mathfrak{A}^{(Q^*,1)}&\cdots & \mathfrak{A}^{(Q^*,S-1)} & \mathfrak{A}^{(Q^*,S)} \cr
			0& \mathfrak{A}^{(0,0^*)} & \mathfrak{A}^{(0,Q^*) } &\mathfrak{A}^{(0,0)}  &\mathfrak{A}^{(0,1)} &\cdots & \mathfrak{A}^{(0,S-1)} & \mathfrak{A}^{(0,S)} \cr
			1& \mathfrak{A}^{(1,0^*)} & \mathfrak{A}^{(1,Q^*) } &\mathfrak{A}^{(1,0)}  &\mathfrak{A}^{(1,1)}  &\cdots & \mathfrak{A}^{(1,S-1)} & \mathfrak{A}^{(1,S)} \cr
			\vdots & \vdots & \vdots & \vdots  & \vdots  &\cdots  & \vdots & \vdots \cr
			S-1& \mathfrak{A}^{(S-1,0^*)} & \mathfrak{A}^{(S-1,Q^*) } &\mathfrak{A}^{(S-1,0)}  &\mathfrak{A}^{(S-1,1)}  &\cdots & \mathfrak{A}^{(S-1,S-1)} & \mathfrak{A}^{(S-1,S)} \cr
			S & \mathfrak{A}^{(S,0^*)} & \mathfrak{A}^{(S,Q^*) } &\mathfrak{A}^{(S,0)}  & \mathfrak{A}^{(S,1)}&\cdots & \mathfrak{A}^{(S,S-1)} & \mathfrak{A}^{(S,S)} \cr
		}
	\end{align}
	For $\iota_2=0^*,Q^*$ and $\iota^{\prime}_2=0^*,Q^*$
	$$\mathfrak{A}^{(\iota_2,\iota^{\prime}_2)}=\begin{pmatrix}
		0 & 0 & 0 & \cdots & 0 \\
		0 & 0 & 0 & \cdots & 0 \\
		\vdots & \vdots & \vdots & \vdots & \vdots \\
		e^{\iota_2,\iota^{\prime}_2}_0 & e^{\iota_2,\iota^{\prime}_2}_1 & e^{\iota_2,\iota^{\prime}_2}_2 & \cdots & e^{\iota_2,\iota^{\prime}_2}_N    
	\end{pmatrix}_{(N+1)\times(N+1)}$$
	For $\iota_2=0^*,Q^*$ and $\iota^{\prime}_2=0$
	\begin{equation*}
		\mathfrak{A}^{(\iota_2,\iota^{\prime}_2)}=\left[\begin{array}{c}
			x^{\iota_2,\iota^{\prime}_2}_{0},
			x^{\iota_2,\iota^{\prime}_2}_{1},
			x^{\iota_2,\iota^{\prime}_2}_{2}, 
			\cdots, 
			x^{\iota_2,\iota^{\prime}_2}_{c-1}
		\end{array}\right]
	\end{equation*}
	\qquad For $\iota_3=0,1,\cdots,c-1$
	$$x^{\iota_2,\iota^{\prime}_2}_{\iota_3}=\begin{pmatrix}
		0 & 0 & 0 & \cdots & 0 \\
		0 & 0 & 0 & \cdots & 0 \\
		\vdots & \vdots & \vdots & \vdots & \vdots \\
		e^{\iota_2,\iota^{\prime}_2}_1 & e^{\iota_2,\iota^{\prime}_2}_2 & e^{\iota_2,\iota^{\prime}_2}_3 & \cdots & e^{\iota_2,\iota^{\prime}_2}_{N-(c-(\iota_3+1))}   
	\end{pmatrix}_{(N+1)\times(N-(c-(\iota_3+1)))}$$
	For $\iota_2=0^*,Q^*$ and $\iota^{\prime}_2=1,2,\cdots,c-1$
	\begin{equation*}
		\mathfrak{A}^{(\iota_2,\iota^{\prime}_2)}=\left[\begin{array}{c}
			x^{\iota_2,\iota^{\prime}_2}_{0},
			x^{\iota_2,\iota^{\prime}_2}_{1},
			x^{\iota_2,\iota^{\prime}_2}_{2}, 
			\cdots, 
			x^{\iota_2,\iota^{\prime}_2}_{c-1}
		\end{array}\right]
	\end{equation*}
	\qquad For $\iota_3=0,1,\cdots,c-(\iota_2+1)$
	$$x^{\iota_2,\iota^{\prime}_2}_{\iota_3}=\begin{pmatrix}
		0 & 0 & 0 & \cdots & 0 \\
		0 & 0 & 0 & \cdots & 0 \\
		\vdots & \vdots & \vdots & \vdots & \vdots \\
		e^{\iota_2,\iota^{\prime}_2}_1 & e^{\iota_2,\iota^{\prime}_2}_2 & e^{\iota_2,\iota^{\prime}_2}_3 & \cdots & e^{\iota_2,\iota^{\prime}_2}_{N-(c-(\iota_3+1))}   
	\end{pmatrix}_{(N+1)\times(N-(c-(\iota_3+1)))}$$
	\qquad For $\iota_3=c-(\iota_2+1)+1,\cdots,c-1$
	$$ x^{(\iota_2,\iota^{\prime}_2)}_{\iota_3}=\begin{pmatrix}
		0 & 0 & 0 & \cdots & 0 \\
		0 & 0 & 0 & \cdots & 0 \\
		\vdots & \vdots & \vdots & \vdots & \vdots \\
		e^{\iota_2,\iota^{\prime}_2}_0 & e^{\iota_2,\iota^{\prime}_2}_1 & e^{\iota_2,\iota^{\prime}_2}_2 & \cdots & e^{\iota_2,\iota^{\prime}_2}_N    
	\end{pmatrix}_{(N+1)\times(N+1)}$$
	For $\iota_2=0^*,Q^*$ and $\iota^{\prime}_2=c,c+1,\cdots,S$
	\begin{equation*}
		\mathfrak{A}^{(\iota_2,\iota^{\prime}_2)}=\left[\begin{array}{c}
			x^{\iota_2,\iota^{\prime}_2}_{0},
			x^{\iota_2,\iota^{\prime}_2}_{1},
			x^{\iota_2,\iota^{\prime}_2}_{2}, 
			\cdots, 
			x^{\iota_2,\iota^{\prime}_2}_{c-1}
		\end{array}\right]
	\end{equation*}
	\qquad For $\iota_3=0,1,\cdots,c-1$
	$$x^{(\iota_2,\iota^{\prime}_2)}_{\iota_3}=\begin{pmatrix}
		0 & 0 & 0 & 0 & 0 \\
		0 & 0 & 0 & 0 & 0 \\
		\vdots & \vdots & \vdots & \vdots & \vdots \\
		e^{\iota_2,\iota^{\prime}_2}_0 & e^{\iota_2,\iota^{\prime}_2}_1 & e^{\iota_2,\iota^{\prime}_2}_2 & \cdots & e^{\iota_2,\iota^{\prime}_2}_N    
	\end{pmatrix}_{(N+1)\times(N+1)}$$
	For $\iota_2=0$ and $\iota^{\prime}_2=0^*,Q^*$
	\begin{equation*}
		\mathfrak{A}^{(\iota_2,\iota^{\prime}_2)}=\left[\begin{array}{c}
			x^{\iota_2,\iota^{\prime}_2}_{0}\\
			x^{\iota_2,\iota^{\prime}_2}_{1}\\
			x^{\iota_2,\iota^{\prime}_2}_{2}\\
			\cdots\\
			x^{\iota_2,\iota^{\prime}_2}_{c-1}
		\end{array}\right]
	\end{equation*}
	\qquad For $\iota_3=0,1,\cdots,c-1$
	$$x^{\iota_2,\iota^{\prime}_2}_{\iota_3}=\begin{pmatrix}
		0 & 0 & 0 & 0 & 0 \\
		0 & 0 & 0 & 0 & 0 \\
		\vdots & \vdots & \vdots & \vdots & \vdots \\
		e^{\iota_2,\iota^{\prime}_2}_0 & e^{\iota_2,\iota^{\prime}_2}_1 & e^{\iota_2,\iota^{\prime}_2}_3 & \dots & e^{\iota_2,\iota^{\prime}_2}_{N}   
	\end{pmatrix}_{(N-(c-(\iota_3+1)))\times (N+1)}$$
	For $\iota_2=1,2,\cdots,c-1$ and $\iota^{\prime}_2=0^*,Q^*$
	\begin{equation*}
		\mathfrak{A}^{(\iota_2,\iota^{\prime}_2)}=\left[\begin{array}{c}
			x^{\iota_2,\iota^{\prime}_2}_{0}\\
			x^{\iota_2,\iota^{\prime}_2}_{1}\\
			x^{\iota_2,\iota^{\prime}_2}_{2}\\
			\cdots\\
			x^{\iota_2,\iota^{\prime}_2}_{c-1}
		\end{array}\right]
	\end{equation*}
	\qquad For $\iota_3=0,1,\cdots,c-(\iota_2+1)$
	$$x^{\iota_2,\iota^{\prime}_2}_{\iota_3}=\begin{pmatrix}
		0 & 0 & 0 & \cdots & 0 \\
		0 & 0 & 0 & \cdots & 0 \\
		\vdots & \vdots & \vdots & \vdots & \vdots \\
		e^{\iota_2,\iota^{\prime}_2}_0 & e^{\iota_2,\iota^{\prime}_2}_1 & e^{\iota_2,\iota^{\prime}_2}_2 & \dots & e^{\iota_2,\iota^{\prime}_2}_{N}   
	\end{pmatrix}_{(N-(c-(\iota_3+1)))\times (N+1)}$$
	\qquad $\iota_3=c-(\iota_2+1)+1,\cdots,c-1$
	$$ x^{(\iota_2,\iota^{\prime}_2)}_{\iota_3}=\begin{pmatrix}
		0 & 0 & 0 & 0 & 0 \\
		0 & 0 & 0 & 0 & 0 \\
		\vdots & \vdots & \vdots & \vdots & \vdots \\
		e^{\iota_2,\iota^{\prime}_2}_0 & e^{\iota_2,\iota^{\prime}_2}_1 & e^{\iota_2,\iota^{\prime}_2}_2 & \dots & e^{\iota_2,\iota^{\prime}_2}_N    
	\end{pmatrix}_{(N+1)\times(N+1)}$$
	For $\iota_2=c,c+1,\cdots,S$ and $\iota^{\prime}_2=0^*,Q^*$
	\begin{equation*}
		\mathfrak{A}^{(\iota_2,\iota^{\prime}_2)}=\left[\begin{array}{c}
			x^{\iota_2,\iota^{\prime}_2}_{0}\\
			x^{\iota_2,\iota^{\prime}_2}_{1}\\
			x^{\iota_2,\iota^{\prime}_2}_{2}\\
			\cdots\\
			x^{\iota_2,\iota^{\prime}_2}_{c-1}
		\end{array}\right]
	\end{equation*}
	\qquad For $\iota_3=0,1,\cdots,c-1$
	$$x^{(\iota_2,\iota^{\prime}_2)}_{\iota_3}=\begin{pmatrix}
		0 & 0 & 0 & 0 & 0 \\
		0 & 0 & 0 & 0 & 0 \\
		\vdots & \vdots & \vdots & \vdots & \vdots \\
		e^{\iota_2,\iota^{\prime}_2}_0 & e^{\iota_2,\iota^{\prime}_2}_1 & e^{\iota_2,\iota^{\prime}_2}_2 & \dots & e^{\iota_2,\iota^{\prime}_2}_N    
	\end{pmatrix}_{(N+1)\times(N+1)}$$
	For $\iota_2=0$ and $\iota^{\prime}_2=0$
	$$	\mathfrak{A}^{(\iota_2,\iota^{\prime}_2)}=\begin{pmatrix}
		x^{\iota_2,\iota^{\prime}_2}_{0,0} & x^{\iota_2,\iota^{\prime}_2}_{0,1} & x^{\iota_2,\iota^{\prime}_2}_{0,2} & \cdots & x^{\iota_2,\iota^{\prime}_2}_{0,c-1}  \\
		x^{\iota_2,\iota^{\prime}_2}_{1,0} & x^{\iota_2,\iota^{\prime}_2}_{1,1} & x^{\iota_2,\iota^{\prime}_2}_{1,2} & \cdots & x^{\iota_2,\iota^{\prime}_2}_{1,c-1}   \\
		\vdots & \vdots & \vdots & \vdots & \vdots \\
		x^{\iota_2,\iota^{\prime}_2}_{c-1,0} & x^{\iota_2,\iota^{\prime}_2}_{c-1,1} & x^{\iota_2,\iota^{\prime}_2}_{c-1,2} & \cdots & x^{\iota_2,\iota^{\prime}_2}_{c-1,c-1}    
	\end{pmatrix}_{c\times c}$$
	\qquad For $\iota_3=0,1,\cdots,c-1$ and $\iota^{\prime}_3=0,1,\cdots,c-1$
	$$x^{\iota_2,\iota^{\prime}_2}_{\iota_3,\iota^{\prime}_3}=\begin{pmatrix}
		0 & 0 & 0 & 0 & 0 \\
		\vdots & \vdots & \vdots & \vdots & \vdots \\
		0 & 0 & 0 & 0 & 0 \\
		e^{\iota_2,\iota^{\prime}_2}_1 & e^{\iota_2,\iota^{\prime}_2}_2 & e^{\iota_2,\iota^{\prime}_2}_3 & \dots & e^{\iota_2,\iota^{\prime}_2}_{N-(c-(\iota^{\prime}_3+1))}  
	\end{pmatrix}_{(N-(c-(\iota_3+1)))\times (N-(c-(\iota^{\prime}_3+1)))}$$
	For $\iota_2=1,2,\cdots,c-1$ and $\iota^{\prime}_2=1,2,\cdots,c-1$
	$$	\mathfrak{A}^{(\iota_2,\iota^{\prime}_2)}=\begin{pmatrix}
		x^{\iota_2,\iota^{\prime}_2}_{0,0} & x^{\iota_2,\iota^{\prime}_2}_{0,1} & x^{\iota_2,\iota^{\prime}_2}_{0,2} & \cdots & x^{\iota_2,\iota^{\prime}_2}_{0,c-1}  \\
		x^{\iota_2,\iota^{\prime}_2}_{1,0} & x^{\iota_2,\iota^{\prime}_2}_{1,1} & x^{\iota_2,\iota^{\prime}_2}_{1,2} & \cdots & x^{\iota_2,\iota^{\prime}_2}_{1,c-1}   \\
		\vdots & \vdots & \vdots & \vdots & \vdots \\
		x^{\iota_2,\iota^{\prime}_2}_{c-1,0} & x^{\iota_2,\iota^{\prime}_2}_{c-1,1} & x^{\iota_2,\iota^{\prime}_2}_{c-1,2} & \cdots & x^{\iota_2,\iota^{\prime}_2}_{c-1,c-1}    
	\end{pmatrix}_{c\times c}$$
	\qquad For $\iota_3=0,1,\cdots,c-(\iota_2+1)$ and $\iota^{\prime}_3=0,1,\cdots,c-(\iota^{\prime}_2+1)$
	$$x^{\iota_2,\iota^{\prime}_2}_{\iota_3,\iota^{\prime}_3}=\begin{pmatrix}
		0 & 0 & 0 & 0 & 0 \\
		\vdots & \vdots & \vdots & \vdots & \vdots \\
		0 & 0 & 0 & 0 & 0 \\
		e^{\iota_2,\iota^{\prime}_2}_1 & e^{\iota_2,\iota^{\prime}_2}_2 & e^{\iota_2,\iota^{\prime}_2}_3 & \dots & e^{\iota_2,\iota^{\prime}_2}_{N-(c-(\iota^{\prime}_3+1))}  
	\end{pmatrix}_{(N-(c-(\iota_3+1)))\times (N-(c-(\iota^{\prime}_3+1)))}$$
	\qquad For $\iota_3=0,1,\cdots,c-(\iota_2+1)$ and $\iota^{\prime}_3=c-(\iota^{\prime}_2+1)+1,\cdots,c-1$
	$$x^{\iota_2,\iota^{\prime}_2}_{\iota_3,\iota^{\prime}_3}=\begin{pmatrix}
		0 & 0 & 0 & 0 & 0 \\
		\vdots & \vdots & \vdots & \vdots & \vdots \\
		0 & 0 & 0 & 0 & 0 \\
		e^{\iota_2,\iota^{\prime}_2}_0 & e^{\iota_2,\iota^{\prime}_2}_1 & e^{\iota_2,\iota^{\prime}_2}_2 & \dots & e^{\iota_2,\iota^{\prime}_2}_{N}  
	\end{pmatrix}_{(N-(c-(\iota_3+1)))\times (N+1)}$$
	\qquad For $\iota_3=c-(\iota^{\prime}_2+1)+1,\cdots,c-1$ and $\iota^{\prime}_3=0,1,\cdots,c-(\iota^{\prime}_2+1)$
	$$x^{\iota_2,\iota^{\prime}_2}_{\iota_3,\iota^{\prime}_3}=\begin{pmatrix}
		0 & 0 & 0 & 0 & 0 \\
		\vdots & \vdots & \vdots & \vdots & \vdots \\
		0 & 0 & 0 & 0 & 0 \\
		e^{\iota_2,\iota^{\prime}_2}_1 & e^{\iota_2,\iota^{\prime}_2}_2 & e^{\iota_2,\iota^{\prime}_2}_3 & \dots & e^{\iota_2,\iota^{\prime}_2}_{N-(c-(\iota^{\prime}_3+1}  
	\end{pmatrix}_{(N+1)\times (N-(c-(\iota^{\prime}_3+1)))}$$
	\qquad For $\iota_3=c-(\iota^{\prime}_2+1)+1,\cdots,c-1$ and $\iota^{\prime}_3=c-(\iota^{\prime}_2+1)+1,\cdots,c-1$
	$$x^{\iota_2,\iota^{\prime}_2}_{\iota_3,\iota^{\prime}_3}=\begin{pmatrix}
		0 & 0 & 0 & 0 & 0 \\
		\vdots & \vdots & \vdots & \vdots & \vdots \\
		0 & 0 & 0 & 0 & 0 \\
		e^{\iota_2,\iota^{\prime}_2}_0 & e^{\iota_2,\iota^{\prime}_2}_1 & e^{\iota_2,\iota^{\prime}_2}_2 & \dots & e^{\iota_2,\iota^{\prime}_2}_{N}  
	\end{pmatrix}_{(N+1)\times (N+1)}$$
	For $\iota_2=c,c+1,\cdots,S$ and $\iota^{\prime}_2=c,c+1,\cdots,S$
	$$	\mathfrak{A}^{(\iota_2,\iota^{\prime}_2)}=\begin{pmatrix}
		x^{\iota_2,\iota^{\prime}_2}_{0,0} & x^{\iota_2,\iota^{\prime}_2}_{0,1} & x^{\iota_2,\iota^{\prime}_2}_{0,2} & \cdots & x^{\iota_2,\iota^{\prime}_2}_{0,c-1}  \\
		x^{\iota_2,\iota^{\prime}_2}_{1,0} & x^{\iota_2,\iota^{\prime}_2}_{1,1} & x^{\iota_2,\iota^{\prime}_2}_{1,2} & \cdots & x^{\iota_2,\iota^{\prime}_2}_{1,c-1}   \\
		\vdots & \vdots & \vdots & \vdots & \vdots \\
		x^{\iota_2,\iota^{\prime}_2}_{c-1,0} & x^{\iota_2,\iota^{\prime}_2}_{c-1,1} & x^{\iota_2,\iota^{\prime}_2}_{c-1,2} & \cdots & x^{\iota_2,\iota^{\prime}_2}_{c-1,c-1}    
	\end{pmatrix}_{c\times c}$$
	\qquad For $\iota_3=0,1,\cdots,c-1$ and $\iota^{\prime}_3=0,1,\cdots,c-1$
	$$x^{\iota_2,\iota^{\prime}_2}_{\iota_3,\iota^{\prime}_3}=\begin{pmatrix}
		0 & 0 & 0 & 0 & 0 \\
		\vdots & \vdots & \vdots & \vdots & \vdots \\
		0 & 0 & 0 & 0 & 0 \\
		e^{\iota_2,\iota^{\prime}_2}_0 & e^{\iota_2,\iota^{\prime}_2}_1 & e^{\iota_2,\iota^{\prime}_2}_2 & \dots & e^{\iota_2,\iota^{\prime}_2}_{N}  
	\end{pmatrix}_{(N+1)\times (N+1)}$$
\end{thm}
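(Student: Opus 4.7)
My plan is to verify equation \eqref{e5} by substituting the block ansatz \eqref{e6} for $\mathbb{R}$ and checking the identity block by block. The key structural observation is that $\mathbb{H}_{0}$ and $\mathbb{H}_{M0}$ are both (block) diagonal with respect to the inventory coordinate $\iota_{2}$, and inside each diagonal block their non-zero entries are supported only on the rows corresponding to the waiting-hall-full state $\iota_{6}=N$ (for $\mathbb{H}_{0}$) and on the off-diagonal retrial entries of $E_{\cdot}$ (for $\mathbb{H}_{M0}$). This sparsity is precisely what forces every sub-block $\mathfrak{A}^{(\iota_2,\iota_2')}$ of $\mathbb{R}$ in the displayed forms to have all rows equal to zero except the one row indexed by $\iota_{6}=N$, which matches the structure of $x^{\iota_2,\iota_2'}_{\iota_3}$ and $x^{\iota_2,\iota_2'}_{\iota_3,\iota_3'}$ written in the statement.

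First, I would partition the quadratic equation using the coarse partition of the state space by the inventory level $\iota_{2}\in\{0^{*},Q^{*},0,1,\ldots,S\}$, and observe that $[\mathbb{R}^{2}\mathbb{H}_{M0}+\mathbb{R}\mathbb{H}_{M1}+\mathbb{H}_{0}]^{(\iota_2,\iota_2')}$ reduces, thanks to the (block) diagonal nature of $\mathbb{H}_{M0}$ and $\mathbb{H}_{0}$, to
\begin{equation*}
\Bigl(\sum_{\iota_2''}\mathfrak{A}^{(\iota_2,\iota_2'')}\mathfrak{A}^{(\iota_2'',\iota_2')}\Bigr)\mathbb{W}_{\iota_2'}+\sum_{\iota_2''}\mathfrak{A}^{(\iota_2,\iota_2'')}\bigl[\mathbb{H}_{M1}\bigr]_{\iota_2'',\iota_2'}+\delta_{\iota_2\iota_2'}\Bbbk_{\iota_2'}.
\end{equation*}
Since the columns of $\mathbb{W}_{\iota_2'}$ and $\Bbbk_{\iota_2'}$ are non-zero only on the ``$\iota_{6}=N$'' strip, the row-sparsity of each $\mathfrak{A}$ block is self-consistent with the ansatz; this reduces the matrix quadratic to a finite family of \emph{scalar} recurrences in the unknowns $e_{k}^{\iota_2,\iota_2'}$.

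Next, I would read off those scalar recurrences level by level through the inventory coordinate, exploiting the transition list encoded in $\mathbb{F}_{\iota_2},\mathbb{L}_{\iota_2},\mathbb{J}_{\iota_2},\hat{\mathbb{G}}_{\iota_2}$. The natural order is: start from the replenishment-end blocks $\iota_{2}\in\{c,\ldots,S\}$ (where $\hat{\mathbb{G}}_{c}$ governs the interior and $\mathfrak{A}^{(\cdot,S)}$ has only one contributing neighbour because $\mathbb{F}_{c+1}$ couples $\iota_{2}$ to $\iota_{2}-1$), then work downward through $\iota_{2}=c-1,\ldots,0$ using $\mathbb{F}_{\iota_{2}+1}\hat{\mathbb{G}}_{\iota_{2}}^{-1}$-type factors, and finally close the loop at $\iota_{2}=0^{*},Q^{*}$ via the vacation and replenishment couplings $\mathbb{J}_{0^{*}},\mathbb{J}_{Q^{*}},\mathbb{J}_{Q},\mathbb{L}_{c}$. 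Existence and non-negativity of the minimal solution are guaranteed by the stability condition of Theorem \ref{t1} together with the standard Neuts result \cite{Neu}, so at each step the linear system for the row $(e_{0}^{\iota_2,\iota_2'},\ldots,e_{N}^{\iota_2,\iota_2'})$ has a unique admissible solution.

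The principal obstacle is purely combinatorial rather than analytical: one must keep track, within each $\mathfrak{A}^{(\iota_2,\iota_2')}$, of the correct dimension $(N+1)$ versus $(N-(c-(\iota_{3}+1)))$ coming from the sub-state-spaces $E_{1},\ldots,E_{11}$ and match them correctly against the matching dimensions in $\mathbb{F}_{\cdot},\mathbb{L}_{\cdot},\mathbb{J}_{\cdot}$. The verification that cross-level products $\mathfrak{A}^{(\iota_2,\iota_2'')}\mathfrak{A}^{(\iota_2'',\iota_2')}$ preserve the ``only last row is non-zero'' pattern is the single fact that makes the ansatz close up; it follows because multiplication by $\mathbb{W}_{\iota_2'}$ on the right selects the $\iota_{6}=N$ column, after which only the last row of the left factor survives. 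Once that invariance is established, the remainder of the proof is a routine block-by-block identification of coefficients, and the stated form of $\mathbb{R}$ follows.
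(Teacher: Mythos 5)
Your proposal follows essentially the same route as the paper: substitute the block ansatz \eqref{e6} into \eqref{e5}, use the block-diagonal structure of $\mathbb{H}_{M0}$ and $\mathbb{H}_{0}$ together with the banded structure of $\mathbb{H}_{M1}$ to reduce the matrix quadratic to the family of coupled block equations in the unknowns $\mathfrak{A}^{(\iota_2,\iota_2')}$ (the paper's equations \eqref{d8}--\eqref{d15}), and then solve these iteratively (the paper invokes Gauss--Seidel rather than your backward sweep through the inventory levels, a cosmetic difference). Your added justification of the ``only the $\iota_6=N$ row is non-zero'' pattern is a welcome supplement that the paper omits entirely, though the cleaner argument is that $\mathbb{R}=\mathbb{H}_{0}\bigl(-(\mathbb{H}_{M1}+\mathbb{R}\mathbb{H}_{M0})\bigr)^{-1}$ inherits the zero rows of $\mathbb{H}_{0}$ directly, rather than your column-selection argument via $\mathbb{W}_{\iota_2'}$.
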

\begin{proof} 
	By utilizing the matrices $\mathbb{H}_{M0},\mathbb{H}_{M1}$, and $\mathbb{H}_0$ in equation \eqref{e5}, we can derive a set of nonlinear equations as follows:\\
	
	for 	$ \iota_{2}=0^*,Q^*,0,1,2, \cdots, S , \iota^{\prime}_{2}=0^* $
	\begin{eqnarray}\label{d8}
		\sum_{\substack{j=0^*,Q^*}}\mathfrak{A}^{(\iota_{2},j)}\mathfrak{A}^{(j,\iota^{\prime}_{2})}\mathbb{W}_{\iota^{\prime}_{2}}+\sum^S_{\substack{j=0}}\mathfrak{A}^{(\iota_{2},j)}\mathfrak{A}^{(j,\iota^{\prime}_{2})}\mathbb{W}_{\iota^{\prime}_{2}}+\mathfrak{A}^{(\iota_{2},\iota^{\prime}_{2})}\mathbb{G}_{\iota^{\prime}_{2}}+ \mathfrak{A}^{(\iota_{2},1)}\mathbb{J}_{\iota^{\prime}_{2}}+\delta_{\iota_{2}\iota^{\prime}_{2}}\mathbb{B}_{\iota^{\prime}_{2}} =  \textbf{0}~~~~
	\end{eqnarray}
	for 	$ \iota_{2}=0^*,Q^*,0,1,2, \cdots, S , \iota^{\prime}_{2}=Q^* $
	\begin{eqnarray}\label{d9}
		\sum_{\substack{j=0^*,Q^*}}\mathfrak{A}^{(\iota_{2},j)}\mathfrak{A}^{(j,\iota^{\prime}_{2})}\mathbb{W}_{\iota^{\prime}_{2}}+\sum^S_{\substack{j=0}}\mathfrak{A}^{(\iota_{2},j)}\mathfrak{A}^{(j,\iota^{\prime}_{2})}\mathbb{W}_{\iota^{\prime}_{2}}+\mathfrak{A}^{(\iota_{2},\iota^{\prime}_{2})}\mathbb{G}_{\iota^{\prime}_{2}}+ \mathfrak{A}^{(\iota_{2},0^*)}\mathbb{J}_{\iota^{\prime}_{2}}+\delta_{\iota_{2}\iota^{\prime}_{2}}\mathbb{B}_{\iota^{\prime}_{2}} =  \textbf{0}~~~~
	\end{eqnarray}
	for 	$ \iota_{2}=0^*,Q^*,0,1,2, \cdots, S , \iota^{\prime}_{2}=0 $
	\begin{eqnarray}\label{d10}\nonumber
		\sum_{\substack{j=0^*,Q^*}}\mathfrak{A}^{(\iota_{2},j)}\mathfrak{A}^{(j,\iota^{\prime}_{2})}\mathbb{W}_{\iota^{\prime}_{2}}+\sum^S_{\substack{j=0}}\mathfrak{A}^{(\iota_{2},j)}\mathfrak{A}^{(j,\iota^{\prime}_{2})}\mathbb{W}_{\iota^{\prime}_{2}}+\mathfrak{A}^{(\iota_{2},\iota^{\prime}_{2})}\mathbb{G}_{\iota^{\prime}_{2}}+ \mathfrak{A}^{(\iota_{2},0^*)}\mathbb{J}_{\iota^{\prime}_{2}}\\+\mathfrak{A}^{(\iota_{2},\iota^{\prime}_{2}+1)}\mathbb{F}_{\iota^{\prime}_{2}+1}+\delta_{\iota_{2}\iota^{\prime}_{2}}\mathbb{B}_{\iota^{\prime}_{2}} =  \textbf{0}
	\end{eqnarray}
	for 	$ \iota_{2}=0^*,Q^*,0,1,2, \cdots, S , \iota^{\prime}_{2}=1,2,\cdots,c$
	\begin{eqnarray}\label{d11}
		\hspace*{-1cm}
		\sum_{\substack{j=0^*,Q^*}}\mathfrak{A}^{(\iota_{2},j)}\mathfrak{A}^{(j,\iota^{\prime}_{2})}\mathbb{W}_{\iota^{\prime}_{2}}+\sum^S_{\substack{j=0}}\mathfrak{A}^{(\iota_{2},j)}\mathfrak{A}^{(j,\iota^{\prime}_{2})}\mathbb{W}_{\iota^{\prime}_{2}}+\mathfrak{A}^{(\iota_{2},\iota^{\prime}_{2})}\mathbb{G}_{\iota^{\prime}_{2}}+\mathfrak{A}^{(\iota_{2},\iota^{\prime}_{2}+1)}\mathbb{F}_{\iota^{\prime}_{2}+1}+\delta_{\iota_{2}\iota^{\prime}_{2}}\mathbb{B}_{\iota^{\prime}_{2}} =  \textbf{0}~~~~
	\end{eqnarray}
	for 	$ \iota_{2}=0^*,Q^*,0,1,2, \cdots, S , \iota^{\prime}_{2}=c+1,c+2,\cdots,Q-1$
	\begin{eqnarray}\label{d12}
		\sum_{\substack{j=0^*,Q^*}}\mathfrak{A}^{(\iota_{2},j)}\mathfrak{A}^{(j,\iota^{\prime}_{2})}\mathbb{W}_{c}+\sum^S_{\substack{j=0}}\mathfrak{A}^{(\iota_{2},j)}\mathfrak{A}^{(j,\iota^{\prime}_{2})}\mathbb{W}_{c}+\mathfrak{A}^{(\iota_{2},\iota^{\prime}_{2})}\mathbb{G}_{c}+\mathfrak{A}^{(\iota_{2},\iota^{\prime}_{2}+1)}\mathbb{F}_{c+1}+\delta_{\iota_{2}\iota^{\prime}_{2}}\mathbb{B}_{c} =  \textbf{0}~~~~
	\end{eqnarray}
	for 	$ \iota_{2}=0^*,Q^*,0,1,2, \cdots, S , \iota^{\prime}_{2}=Q $
	\begin{eqnarray}\label{d13}\nonumber
		\sum_{\substack{j=0^*,Q^*}}\mathfrak{A}^{(\iota_{2},j)}\mathfrak{A}^{(j,\iota^{\prime}_{2})}\mathbb{W}_{c}+\sum^S_{\substack{j=0}}\mathfrak{A}^{(\iota_{2},j)}\mathfrak{A}^{(j,\iota^{\prime}_{2})}\mathbb{W}_{c}+\mathfrak{A}^{(\iota_{2},\iota^{\prime}_{2})}\mathbb{G}_{c}+ \mathfrak{A}^{(\iota_{2},Q^*)}\mathbb{J}_{\iota^{\prime}_{2}}+\\\mathfrak{A}^{(\iota_{2},Q-\iota^{\prime}_{2})}\mathbb{L}_{Q-\iota^{\prime}_{2}}+\mathfrak{A}^{(\iota_{2},\iota^{\prime}_{2}+1)}\mathbb{F}_{c+1}+\delta_{\iota_{2}\iota^{\prime}_{2}}\mathbb{B}_{c} =  \textbf{0}
	\end{eqnarray}
	for 	$ \iota_{2}=0^*,Q^*,0,1,2, \cdots, S , \iota^{\prime}_{2}=Q+1,Q+2,\cdots,Q+c $
	\begin{eqnarray}\label{d14}\nonumber
		\sum_{\substack{j=0^*,Q^*}}\mathfrak{A}^{(\iota_{2},j)}\mathfrak{A}^{(j,\iota^{\prime}_{2})}\mathbb{W}_{c}+\sum^S_{\substack{j=0}}\mathfrak{A}^{(\iota_{2},j)}\mathfrak{A}^{(j,\iota^{\prime}_{2})}\mathbb{W}_{c}+\mathfrak{A}^{(\iota_{2},\iota^{\prime}_{2})}\mathbb{G}_{c}+ \mathfrak{A}^{(\iota_{2},Q-\iota^{\prime}_{2})}\mathbb{L}_{Q-\iota^{\prime}_{2}}+\\\mathfrak{A}^{(\iota_{2},\iota^{\prime}_{2}+1)}\mathbb{F}_{c+1}+\delta_{\iota_{2}\iota^{\prime}_{2}}\mathbb{B}_{c} =  \textbf{0}
	\end{eqnarray}
	for 	$ \iota_{2}=0^*,Q^*,0,1,2, \cdots, S , \iota^{\prime}_{2}=Q+c+1,Q+c+2,\cdots,S $
	\begin{eqnarray}\label{d15}\nonumber
		\sum_{\substack{j=0^*,Q^*}}\mathfrak{A}^{(\iota_{2},j)}\mathfrak{A}^{(j,\iota^{\prime}_{2})}\mathbb{W}_{c}+\sum^S_{\substack{j=0}}\mathfrak{A}^{(\iota_{2},j)}\mathfrak{A}^{(j,\iota^{\prime}_{2})}\mathbb{W}_{c}+\mathfrak{A}^{(\iota_{2},\iota^{\prime}_{2})}\mathbb{G}_{c}+ \mathfrak{A}^{(\iota_{2},Q-\iota^{\prime}_{2})}\mathbb{L}_{c}+\\\mathfrak{A}^{(\iota_{2},\iota^{\prime}_{2}+1)}\mathbb{F}_{c+1}+\delta_{\iota_{2}\iota^{\prime}_{2}}\mathbb{B}_{c} =  \textbf{0}
	\end{eqnarray}
	solving equations \eqref{d8} to \eqref{d15} by Gauss-Seidel iterative method, we will obtain the $ \mathbb{R} $ matrix.
\end{proof}
\begin{thm}
	The stationary probability vector $	\Phi^{(\iota_1)}$ of the Markov chain can be determined by
	$$\Phi^{(\iota_1)}=\begin{cases}
		\Phi^{(0)}\Omega_{\iota_1} & \hbox { if }  \iota_1= 0,1,2, \cdots,M \\
		\Phi^{(0)}\Omega_{M} \mathbb{R}^{(\iota_1-M)}  &  \hbox { if } \iota_1>M
	\end{cases}$$ 
	Where $\mathbb{R}$ is obtained from the equation \eqref{e5}, it can be obtained from Theorem \eqref{t10},  $$\Omega_{ \iota_1}=\begin{cases}
		I  &\hbox { if }  \iota_1 = 0 \\
		\prod\limits_{j=0}^{ \iota_1} \mathbb{H}_{0}K_{j} & \hbox { if }  \iota_1=1,2, \cdots, M	 
	\end{cases}$$
	$$K_{j}=\begin{cases}
		[-{(\mathbb{H}_{0}K_{j+1}\mathbb{H}_{j+2,0}+\mathbb{H}_{j+1,1})}]^{-1} & \hbox { if } j=1,2, \cdots,M-1\\
		[-(\mathbb{H}_{j,1}+\mathbb{R}\mathbb{H}_{j,0})]^{-1} &  \hbox { if } j=M
	\end{cases}$$
	and $ \Phi^{(0)}$ is obtained by solving  the system of equations
	\begin{eqnarray*}
		\Phi^{(0)}(\mathbb{H}_{0,1}+	\Omega_{1}\mathbb{H}_{1,0})&=&\textbf{0} \\
		\sum\limits_{\iota_1=0}^{\infty}\Phi^{(\iota_{1})}\textbf{e}& = &1.
	\end{eqnarray*}
\end{thm}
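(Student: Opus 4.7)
The plan is to verify the expression for $\Phi^{(\iota_1)}$ by starting from the steady-state equations $\Phi\hat H=\mathbf 0$ together with the normalization $\Phi\mathbf e=1$, and to exploit the block tridiagonal structure of $\hat H$ in two regimes: level-independent for $\iota_1\geq M$ and level-dependent for $0\leq \iota_1\leq M$. First I would write out the block equations that $\Phi\hat H=\mathbf 0$ implies, namely
\begin{align*}
\Phi^{(0)}\mathbb H_{01}+\Phi^{(1)}\mathbb H_{10}&=\mathbf 0,\\
\Phi^{(\iota_1-1)}\mathbb H_{0}+\Phi^{(\iota_1)}\mathbb H_{\iota_1 1}+\Phi^{(\iota_1+1)}\mathbb H_{\iota_1+1,0}&=\mathbf 0,\qquad 1\leq \iota_1\leq M-1,\\
\Phi^{(\iota_1-1)}\mathbb H_{0}+\Phi^{(\iota_1)}\mathbb H_{M 1}+\Phi^{(\iota_1+1)}\mathbb H_{M 0}&=\mathbf 0,\qquad \iota_1\geq M.
\end{align*}

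For the level-independent tail $\iota_1\geq M$, I would invoke the classical matrix-geometric theorem of Neuts: since the ergodicity condition \eqref{e1} holds and $\mathbb R$ is the minimal nonnegative solution of the quadratic equation \eqref{e5} supplied by Theorem \ref{t10}, the tail satisfies $\Phi^{(\iota_1)}=\Phi^{(M)}\mathbb R^{\iota_1-M}$. Substituting this into the boundary equation at level $M$ yields
\[
\Phi^{(M-1)}\mathbb H_{0}+\Phi^{(M)}\bigl(\mathbb H_{M1}+\mathbb R\mathbb H_{M0}\bigr)=\mathbf 0,
\]
which motivates defining $K_{M}=[-(\mathbb H_{M1}+\mathbb R\mathbb H_{M0})]^{-1}$ so that $\Phi^{(M)}=\Phi^{(M-1)}\mathbb H_{0}K_{M}$.

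For the level-dependent head $1\leq \iota_1\leq M-1$, I would perform backward substitution. Assuming $\Phi^{(\iota_1+1)}=\Phi^{(\iota_1)}\mathbb H_{0}K_{\iota_1+1}$ has been established, the middle block equation rearranges to
\[
\Phi^{(\iota_1)}\bigl(\mathbb H_{\iota_1 1}+\mathbb H_{0}K_{\iota_1+1}\mathbb H_{\iota_1+2,0}\bigr)=-\Phi^{(\iota_1-1)}\mathbb H_{0},
\]
which gives $\Phi^{(\iota_1)}=\Phi^{(\iota_1-1)}\mathbb H_{0}K_{\iota_1}$ with the stated recursive definition of $K_{\iota_1}$. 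Iterating this relation from level $0$ upward produces $\Phi^{(\iota_1)}=\Phi^{(0)}\prod_{j=0}^{\iota_1}\mathbb H_{0}K_{j}=\Phi^{(0)}\Omega_{\iota_1}$ for $0\leq \iota_1\leq M$, and composing with the tail yields $\Phi^{(\iota_1)}=\Phi^{(0)}\Omega_{M}\mathbb R^{\iota_1-M}$ for $\iota_1>M$. The boundary equation at level $0$, rewritten via $\Phi^{(1)}=\Phi^{(0)}\Omega_{1}$, becomes $\Phi^{(0)}(\mathbb H_{01}+\Omega_{1}\mathbb H_{10})=\mathbf 0$, and $\Phi^{(0)}$ is then pinned down by this homogeneous system together with $\sum_{\iota_1\geq 0}\Phi^{(\iota_1)}\mathbf e=1$, where the geometric tail allows the sum to be expressed as $\Phi^{(0)}\bigl(\sum_{\iota_1=0}^{M-1}\Omega_{\iota_1}+\Omega_{M}(I-\mathbb R)^{-1}\bigr)\mathbf e=1$.

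The main obstacle I expect is justifying the invertibility of the matrices $-(\mathbb H_{j1}+\mathbb H_{0}K_{j+1}\mathbb H_{j+2,0})$ and $-(\mathbb H_{M1}+\mathbb R\mathbb H_{M0})$ appearing in the definition of each $K_{j}$. One argues that $\mathbb H_{j1}$ is a strictly diagonally dominant matrix with negative diagonal (since the diagonal entries absorb all exit rates including the retrial component), and the added nonnegative perturbation $\mathbb H_{0}K_{j+1}\mathbb H_{j+2,0}$ preserves the sign structure needed to keep the resulting matrix nonsingular; for $K_M$, standard QBD theory guarantees that $\mathbb H_{M1}+\mathbb R\mathbb H_{M0}$ is a nonsingular (defective) generator under the stability condition $(\ref{e1})$. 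Once invertibility is secured, the remainder of the argument is a purely algebraic substitution and the uniqueness of the stationary vector under the assumed ergodicity closes the proof.
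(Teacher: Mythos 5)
Your proposal follows essentially the same route as the paper's proof: write the block equations from $\Phi\hat H=\mathbf 0$, impose the matrix-geometric tail $\Phi^{(\iota_1)}=\Phi^{(M)}\mathbb R^{\iota_1-M}$, back-substitute from level $M$ down to level $0$ to generate the $K_j$ recursion and $\Omega_{\iota_1}$, and close with the level-$0$ boundary equation and the normalization $\Phi^{(0)}\bigl(\sum_{k=0}^{M-1}\Omega_k+\Omega_M(I-\mathbb R)^{-1}\bigr)\mathbf e=1$. Your added remarks on the invertibility of the $K_j$ matrices go beyond what the paper justifies, and note only that the substitution of $\Phi^{(\iota_1+1)}=\Phi^{(\iota_1)}\mathbb H_0K_{\iota_1+1}$ actually produces the block $\mathbb H_{\iota_1+1,0}$ (as in the paper's Step 2), not $\mathbb H_{\iota_1+2,0}$ — an index slip you inherited from the theorem statement itself.
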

\begin{proof}
	let $ \Phi=(\Phi^{(0)},\Phi^{(1)},\Phi^{(2)},\cdots )$ be a probability vector which satisfies
	\begin{eqnarray}
		\Phi \hat{H}=\textbf{0} ~~~~\text{and}~~~~ \Phi \textbf{e}=1
	\end{eqnarray}
	Assume that 
	\begin{eqnarray}
		\label{eee}		\Phi^{(\iota_{1})}=	\Phi^{(M)}\mathbb{R}^{(\iota_{1}-M)} & \text{if}~~ \iota_{1}=M, M+1,\cdots 
	\end{eqnarray}
	Solving the $\Phi \hat{H}=\textbf{0}$, we get the following system of equations
	\begin{align}
		\Phi^{(0)}\mathbb{H}_{0,1}+	\Phi^{(1)}\mathbb{H}_{1,0}&=\textbf{0} \label{e11}\\
		\Phi^{(i-1)}\mathbb{H}_{0}+	\Phi^{(i)}\mathbb{H}_{i,1}+	\Phi^{(i+1)}\mathbb{H}_{i+1,0}&=\textbf{0} ~~\forall ~~~i=1,2, \cdots, M-1 \label{e12}\\
		\Phi^{(M-1)}\mathbb{H}_{0}+	\Phi^{(M)}(\mathbb{H}_{M,1}+\mathbb{R} \mathbb{H}_{M,0})&=\textbf{0} \label{e13}\\
		\text{and }~~\left[\sum \limits _{\iota_{1}=0}^{M-1}\Phi^{(\iota_{1})}+	\Phi^{(M)}(1-\mathbb{R})^{-1}\right] \textbf{e}&=1  \label{e14} 	
	\end{align}
	To solve the system of equations, we can use the following steps:\\
	\begin{itemize}
		\item[\textbf{Step~1:}] ~~~~Express $\Phi^{(M)}$ in terms of $\Phi^{(M-1)}$ using equation \eqref{e13}.
		$$\Phi^{(M)}=	\Phi^{(M-1)}\mathbb{H}_{0}K_{M}, 
		\text{ where } K_{M}=[-(\mathbb{H}_{M,1}+\mathbb{R} \mathbb{H}_{M,0})]^{-1}.$$ 
		\item[\textbf{Step~2:}]~~~~Express each $\Phi^{(i)}$ in terms of $\Phi^{(i-1)}$  for $i=1,2,\dots,M-1$ using \eqref{e12}.\\
		Substitute $i=M-1$ in \eqref{e12}, we get $$	\Phi^{(M-1)}=\Phi^{(M-2)}\mathbb{H}_{0}K_{M-1}
		\text{ where } K_{M-1}=[-(\mathbb{H}_{M-1,1}+\mathbb{H}_{0}K_{M}\mathbb{H}_{M,0})]^{-1}.$$
		Substitute $i=M-2$ in \eqref{e12}, we get $$	\Phi^{(M-2)}=	\Phi^{(M-3)}\mathbb{H}_{0}K_{M-2}, 
		\text{ where } K_{M-2}=[-(\mathbb{H}_{M-2,1}+\mathbb{H}_{0}K_{M-1}\mathbb{H}_{M-1,0})]^{-1}.$$ 
		In general solving \eqref{e12} recursively, 
		\begin{align}
			\Phi^{(\iota_{1})}=	\Phi^{(\iota_{1}-1)}\mathbb{H}_{0}K_{\iota_{1}}~ \forall ~\iota_{1}=1,2, \cdots, M \label{e15}
		\end{align}
		$K_{\iota_{1}}=\begin{cases}
			[-(\mathbb{H}_{\iota_{1},1}+\mathbb{R} \mathbb{H}_{\iota_{1},0})]^{-1}  & \iota_{1}=M\\
			[-(\mathbb{H}_{\iota_{1},1}+\mathbb{H}_{0}K_{\iota_{1}+1}\mathbb{H}_{\iota_{1},0})]^{-1} &   \iota_{1}=1, \cdots, M-1.\\
		\end{cases}$
		\item[\textbf{Step~3:}]~~~~ Express each $\Phi^{(i)}$ for $i=1,2,\dots M$ in terms of $\Phi^{(0)}$ using\eqref{e15}, we have
		\begin{align}
			\Phi^{(\iota_1)}=	\Phi^{(0)} \Omega_{\iota_1}~\forall~\iota_1=0,1,2, \cdots, M \label{e16}
		\end{align}
		where
		\begin{align*}
			\Omega_{\iota_1}=\begin{cases}
				I  &\hbox { if }  \iota_1 = 0 \\
				\prod\limits_{j=0}^{ \iota_1} \mathbb{H}_{0}K_{j} & \hbox { if }  \iota_1=1,2, \cdots, M.	 
			\end{cases}
		\end{align*}
		\item[\textbf{Step~4:}]~~~~ Substitute \eqref{e16}  in \eqref{e14}, we get
		\begin{eqnarray}\label{e34}
			\Phi^{(0)}[\sum\limits_{k=0}^{M-1}\Omega_{k}+\Omega_{M}(1-\mathbb{R})^{-1}] \textbf{e}=1.
		\end{eqnarray}
		Using the equations \eqref{e11}  and  \eqref{e34}, we obtain $\Phi^{(0)}$.
		
	\end{itemize}
\end{proof}	
\section{System Performance Metrics}\label{cha}
Using the steady-state probability vector, various system performance metrics are defined in this section as follows:
\begin{enumerate}
	\item 	The mean inventory level ($ \curlywedge_1$) of the MQIS in steady state is defined by using the positive inventory and its corresponding vector $\Phi$.
	\begin{eqnarray*} \curlywedge_1=\sum\limits_{\iota_1=0}^{\infty}\sum\limits_{\iota_6=0}^{N} Q	\Phi^{(\iota_1,Q,c,0,0,\iota_6)}+\sum\limits_{\iota_1=0}^{\infty}\sum\limits_{\iota_2=1}^{S} \iota_2	\Phi^{(\iota_1,\iota_2)}\bf{e}
	\end{eqnarray*}
	
	\item Under the $(s,Q)$ policy, a reorder is immediately placed when the inventory level drops to $s$ at the end of the service. Using the vector $\Phi$, we can figure out the steady-state mean reorder rate ($ \curlywedge_2$) of the MQIS:
	\begin{eqnarray*}	 \curlywedge_2=\sum\limits_{\iota_1=0}^{\infty}\sum\limits_{\iota_3=0}^{c-1} \sum\limits_{\iota_4=1}^{c-\iota_3} \iota_4 \mu 	\Phi^{(\iota_1,s+1,\iota_3,\iota_4,c-(\iota_3+\iota_4),\iota_4)}+\sum\limits_{\iota_1=0}^{\infty}\sum\limits_{\iota_3=0}^{c-1} \sum\limits_{\iota_6=c-\iota_{3}+1}^{N} 	(c-\iota_{3})\mu \Phi^{(\iota_1,s+1,\iota_3,c-\iota_3,0,\iota_6)}
	\end{eqnarray*}
	\item The expected number of customers in orbit is denoted by $\curlywedge_3$. When arriving customers discover that the waiting hall is full, they enter the orbit with probability $p$. At any given time, the average number of orbital customers present is
	\begin{eqnarray*}
		\curlywedge_3=\sum\limits_{\iota_1=0}^{M-1} \iota_1	\Phi^{(\iota_1)}{\bf e}+M\left(\dfrac{1}{I-\mathbb{R}}\right) \Omega_M \Phi^{(0)}{\bf e}
	\end{eqnarray*}
	
	\item The expected number of customers entering an orbit is denoted by $\curlywedge_4$. When arriving customers find that the waiting hall has reached its maximum level, they enter the orbit under the Bernoulli schedule with probability $p$. At any given time, the average number of customers entering an orbit is
	\begin{eqnarray*}
		\curlywedge_4=\sum\limits_{\iota_1=0}^{\infty}p\lambda 	\Phi^{(\iota_1,0,c,0,0,N)}+
		\sum\limits_{\iota_1=0}^{\infty}p\lambda 	\Phi^{(\iota_1,Q,c,0,0,N)}+\sum\limits_{\iota_1=0}^{\infty}\sum\limits_{\iota_2=0}^{c-1} \sum\limits_{\iota_3=0}^{c-(\iota_2+1)} p\lambda	\Phi^{(\iota_1,\iota_2,\iota_3,\iota_2,c-(\iota_3+\iota_2),N)}		\\+\sum\limits_{\iota_1=0}^{\infty}\sum\limits_{\iota_2=1}^{c-1} \sum\limits_{\iota_3=c-\iota_2}^{c-1} p\lambda	\Phi^{(\iota_1,\iota_2,\iota_3,c-\iota_3,0,N)}+			\sum\limits_{\iota_1=0}^{\infty}\sum\limits_{\iota_2=c}^{S} \sum\limits_{\iota_3=0}^{c-1} p\lambda	\Phi^{(\iota_1,\iota_2,\iota_3,c-\iota_3,0,N)}
	\end{eqnarray*}
	
	\item Let $\curlywedge_5$ represent the mean waiting time of the orbital customer and is defined by
	$$\curlywedge_5=\dfrac{\curlywedge_3}{\curlywedge_4}$$
	
	\item At least one customer has to wait in the waiting room in order to get the mean customer in the queue in the steady-state ($ \curlywedge_6 $) of the MQIS, which is defined by using the vector $ \Phi $, which is given by 
	\begin{adjustwidth}{-1cm}{1cm}
		\begin{align*}
			\curlywedge_6&=\sum\limits_{\iota_1=0}^{\infty} 
			\sum\limits_{\iota_6=1}^{N}\iota_6	 
			\Phi^{(\iota_1,0,c,0,0,\iota_6)}+
			\sum\limits_{\iota_1=0}^{\infty}  \sum\limits_{\iota_6=1}^{N}	\iota_6 \Phi^{(\iota_1,Q,c,0,0,\iota_6)}\\  &+\sum\limits_{\iota_1=0}^{\infty}\sum\limits_{\iota_2=0}^{c-1} \sum\limits_{\iota_3=0}^{c-(\iota_2+1)}\sum\limits_{\iota_6=c-\iota_3}^{N}  \iota_6	\Phi^{(\iota_1,\iota_2,\iota_3,\iota_2,c-(\iota_3+\iota_6),\iota_6)}+	\sum\limits_{\iota_1=0}^{\infty}\sum\limits_{\iota_2=1}^{c-1} \sum\limits_{\iota_3=c-\iota_2}^{c-1} \sum\limits_{\iota_6=1}^{c-\iota_{3}}  \iota_6	\Phi^{(\iota_1,\iota_2,\iota_3,\iota_6,c-(\iota_3+\iota_6),\iota_6)}	\\&+\sum\limits_{\iota_1=0}^{\infty}\sum\limits_{\iota_2=0}^{c-1} \sum\limits_{\iota_3=c-\iota_2}^{c-1}\sum\limits_{\iota_6=c-\iota_3+1}^{N}  \iota_6	\Phi^{(\iota_1,\iota_2,\iota_3,c-\iota_3,0,\iota_6)}+
			\sum\limits_{\iota_1=0}^{\infty}\sum\limits_{\iota_2=c}^{S} \sum\limits_{\iota_3=0}^{c-1}\sum\limits_{\iota_6=1}^{c-\iota_{3}}    \iota_6 \Phi^{(\iota_1,\iota_2,\iota_3,\iota_6,c-(\iota_3+\iota_6),\iota_6)}	\\&		+\sum\limits_{\iota_1=0}^{\infty}\sum\limits_{\iota_2=c}^{S} \sum\limits_{\iota_3=0}^{c-1}  \sum\limits_{\iota_6=c-\iota_3+1}^{N}  \iota_6	\Phi^{(\iota_1,\iota_2,\iota_3,c-\iota_3,0,\iota_6)}
		\end{align*}
	\end{adjustwidth}
	\item The symbol $\curlywedge_7$ represents the expected number of customers entering the waiting hall and is defined by
	\begin{align*}
		&\curlywedge_7=\sum\limits_{\iota_1=0}^{\infty} \sum\limits_{\iota_6=0}^{N-1}\lambda	\Phi^{(\iota_1,0,c,0,0,\iota_6)}+
		\sum\limits_{\iota_1=0}^{\infty}  \sum\limits_{\iota_6=0}^{N-1}	\lambda \Phi^{(\iota_1,Q,c,0,0,\iota_6)}\\	&+\sum\limits_{\iota_1=0}^{\infty}\sum\limits_{\iota_2=0}^{c-1} \sum\limits_{\iota_3=0}^{c-(\iota_2+1)}\sum\limits_{\iota_6=c-\iota_3}^{N-1}  \lambda	\Phi^{(\iota_1,\iota_2,\iota_3,\iota_2,c-(\iota_3+\iota_6),\iota_6)}+	\sum\limits_{\iota_1=0}^{\infty}\sum\limits_{\iota_2=1}^{c-1} \sum\limits_{\iota_3=c-\iota_2}^{c-1} \sum\limits_{\iota_6=0}^{c-\iota_{3}} \lambda	\Phi^{(\iota_1,\iota_2,\iota_3,\iota_6,c-(\iota_3+\iota_6),\iota_6)}\\&+	\sum\limits_{\iota_1=0}^{\infty}\sum\limits_{\iota_2=0}^{c-1} \sum\limits_{\iota_3=c-\iota_2}^{c-1}\sum\limits_{\iota_6=c-\iota_3+1}^{N-1} \lambda	\Phi^{(\iota_1,\iota_2,\iota_3,c-\iota_3,0,\iota_6)}+	\sum\limits_{\iota_1=0}^{\infty}\sum\limits_{\iota_2=c}^{S} \sum\limits_{\iota_3=0}^{c-1}\sum\limits_{\iota_6=0}^{c-\iota_{3}}   \lambda \Phi^{(\iota_1,\iota_2,\iota_3,\iota_6,c-(\iota_3+\iota_6),\iota_6)}	\\&	+\sum\limits_{\iota_1=0}^{\infty}\sum\limits_{\iota_2=c}^{S} \sum\limits_{\iota_3=0}^{c-1}  \sum\limits_{\iota_6=c-\iota_3+1}^{N-1}  \lambda	\Phi^{(\iota_1,\iota_2,\iota_3,c-\iota_3,0,\iota_6)}
	\end{align*}
	\item Let $\curlywedge_8$ represent the average waiting time of a customer in a waiting room.
	$$\curlywedge_8=\dfrac{\curlywedge_6}{\curlywedge_7}$$
	\item Let's say that the number of people waiting in line has reached its limit. The customer has the option of becoming lost. Using the vector $\Phi$, here the expected primary customer is lost in the queue   ($\curlywedge_9$) of the steady-state MQIS is defined as:
	\begin{align*}
		\curlywedge_9&=\sum\limits_{\iota_1=0}^{\infty}p\lambda 	\Phi^{(\iota_1,0,c,0,0,N)}+
		\sum\limits_{\iota_1=0}^{\infty}(1-p)\lambda 	\Phi^{(\iota_1,Q,c,0,0,N)}\\&+	\sum\limits_{\iota_1=0}^{\infty}\sum\limits_{\iota_2=0}^{c-1} \sum\limits_{\iota_3=0}^{c-(\iota_2+1)} (1-p)\lambda	\Phi^{(\iota_1,\iota_2,\iota_3,\iota_2,c-(\iota_3+\iota_2),N)}	+\\&	\sum\limits_{\iota_1=0}^{\infty}\sum\limits_{\iota_2=1}^{c-1} \sum\limits_{\iota_3=c-\iota_2}^{c-1} (1-p)\lambda	\Phi^{(\iota_1,\iota_2,\iota_3,c-\iota_3,0,N)}+	\sum\limits_{\iota_1=0}^{\infty}\sum\limits_{\iota_2=c}^{S} \sum\limits_{\iota_3=0}^{c-1} (1-p)\lambda	\Phi^{(\iota_1,\iota_2,\iota_3,c-\iota_3,0,N)}
	\end{align*}
	\item The current stock level and the length of the queue are both on the positive side. In the steady state of the MQIS, the mean number of busy servers, denoted by the symbol $\curlywedge_{10}$, is defined by using the vector $\Phi$ as follows:
	\begin{adjustwidth}{-1cm}{1cm}
		\begin{align*}	\curlywedge_{10}&=\sum\limits_{\iota_1=0}^{\infty}\sum\limits_{\iota_2=1}^{c-1} \sum\limits_{\iota_3=0}^{c-(\iota_2+1)}\sum\limits_{\iota_6=c-\iota_3}^{N}  \iota_4	\Phi^{(\iota_1,\iota_2,\iota_3,\iota_2,c-(\iota_3+\iota_2),\iota_6)}+	\sum\limits_{\iota_1=0}^{\infty}\sum\limits_{\iota_2=1}^{c-1} \sum\limits_{\iota_3=c-\iota_2}^{c-1} \sum\limits_{\iota_4=1}^{c-\iota_{3}} \iota_4	\Phi^{(\iota_1,\iota_2,\iota_3,\iota_4,c-(\iota_3+\iota_4),\iota_4)}	\\&+	\sum\limits_{\iota_1=0}^{\infty}\sum\limits_{\iota_2=1}^{c-1} \sum\limits_{\iota_3=c-\iota_2}^{c-1}\sum\limits_{\iota_6=c-\iota_3+1}^{N} (c-\iota_3)	\Phi^{(\iota_1,\iota_2,\iota_3,c-\iota_3,0,\iota_6)}+	\sum\limits_{\iota_1=0}^{\infty}\sum\limits_{\iota_2=c}^{S} \sum\limits_{\iota_3=0}^{c-1}\sum\limits_{\iota_4=1}^{c-\iota_{3}}   \iota_4 \Phi^{(\iota_1,\iota_2,\iota_3,\iota_4,c-(\iota_3+\iota_4),\iota_4)}	\\&	+\sum\limits_{\iota_1=0}^{\infty}\sum\limits_{\iota_2=c}^{S} \sum\limits_{\iota_3=0}^{c-1}  \sum\limits_{\iota_6=c-\iota_3+1}^{N}   (c-\iota_3)	\Phi^{(\iota_1,\iota_2,\iota_3,c-\iota_3,0,\iota_6)}
		\end{align*}
	\end{adjustwidth}
	\item Whenever a server completes service and there are no sufficient customers in the queue or inventory, the server takes a vacation. Also, if the server returns from a vacation to satisfy the vacation policy, they immediately leave for another vacation; otherwise, they return to serve the customer. In the steady state of the MQIS, the mean number of servers on vacation, denoted by the symbol $\curlywedge_{11}$, is defined by using the vector $\Phi$ as follows:
	\begin{adjustwidth}{-1cm}{1cm}
		\begin{align*}
			\curlywedge_{11}=&=\sum\limits_{\iota_1=0}^{\infty} \sum\limits_{\iota_6=0}^{N}c	\Phi^{(\iota_1,0,c,0,0,\iota_6)}+
			\sum\limits_{\iota_1=0}^{\infty}  \sum\limits_{\iota_6=0}^{N}	c \Phi^{(\iota_1,Q,c,0,0,\iota_6)}\\	&+\sum\limits_{\iota_1=0}^{\infty}\sum\limits_{\iota_2=0}^{c-2} \sum\limits_{\iota_3=1}^{c-(\iota_2+1)}\sum\limits_{\iota_6=c-\iota_3}^{N} \iota_3	\Phi^{(\iota_1,\iota_2,\iota_3,\iota_2,c-(\iota_3+\iota_2),\iota_6)}+	\sum\limits_{\iota_1=0}^{\infty}\sum\limits_{\iota_2=1}^{c-1} \sum\limits_{\iota_3=c-\iota_2}^{c-1} \sum\limits_{\iota_4=0}^{c-\iota_{3}}  \iota_3	\Phi^{(\iota_1,\iota_2,\iota_3,\iota_4,c-(\iota_3+\iota_4),\iota_4)}	\\&+	\sum\limits_{\iota_1=0}^{\infty}\sum\limits_{\iota_2=1}^{c-1} \sum\limits_{\iota_3=c-\iota_2}^{c-1}\sum\limits_{\iota_6=c-\iota_3+1}^{N}  \iota_3	\Phi^{(\iota_1,\iota_2,\iota_3,c-\iota_3,0,\iota_6)}+	\sum\limits_{\iota_1=0}^{\infty}\sum\limits_{\iota_2=c}^{S} \sum\limits_{\iota_3=1}^{c-1}\sum\limits_{\iota_4=0}^{c-\iota_{3}}    \iota_3 \Phi^{(\iota_1,\iota_2,\iota_3,\iota_4,c-(\iota_3+\iota_4),\iota_4)}	\\&	+\sum\limits_{\iota_1=0}^{\infty}\sum\limits_{\iota_2=c}^{S} \sum\limits_{\iota_3=1}^{c-1}  \sum\limits_{\iota_6=c-\iota_3+1}^{N}  \iota_3	\Phi^{(\iota_1,\iota_2,\iota_3,c-\iota_3,0,\iota_6)}
		\end{align*}
	\end{adjustwidth}
	\item At the end of the vacation or service completion, if the system has at least one sufficient customer in the queue or one sufficient item in the inventory, the server moves to idle mode. Therefore, the mean number of idle servers $\curlywedge_{12}$ is defined by
	\begin{align*}
		&\curlywedge_{12}=c-(\curlywedge_{10}+\curlywedge_{11})
	\end{align*}
	\item The possibility of a retrial customer trying to enter the accommodation place is indicated by the notation $\curlywedge_{13}$ and is known as the overall rate of retrial. Using the mean value $\curlywedge_{3}$ and rate $\theta $ , the $\curlywedge_{13}$ of SQIS at steady-state is defined by
	\begin{align*}
		&\curlywedge_{13}=\theta \curlywedge_{3}
	\end{align*}
	\item The successful rate of retrial occurs when a retrial demand is able to enter the queue if its size is less than $N$. The mean number of successful retrials is denoted by $\curlywedge_{14}$ and it is defined as:
	\begin{adjustwidth}{-1cm}{1cm}
		\begin{align*}
			&\curlywedge_{14}=\sum\limits_{\iota_1=1}^{\infty} \sum\limits_{\iota_6=0}^{N-1}\iota_1\theta	\Phi^{(\iota_1,0,c,0,0,\iota_6)}+
			\sum\limits_{\iota_1=0}^{\infty}  \sum\limits_{\iota_6=0}^{N-1}	\iota_1\theta \Phi^{(\iota_1,Q,c,0,0,\iota_6)}\\	&+\sum\limits_{\iota_1=1}^{\infty}\sum\limits_{\iota_2=0}^{c-1} \sum\limits_{\iota_3=0}^{c-(\iota_2+1)}\sum\limits_{\iota_6=c-\iota_3}^{N-1}\iota_1\theta	\Phi^{(\iota_1,\iota_2,\iota_3,\iota_2,c-(\iota_3+\iota_6),\iota_6)}+	\sum\limits_{\iota_1=1}^{\infty}\sum\limits_{\iota_2=1}^{c-1} \sum\limits_{\iota_3=c-\iota_2}^{c-1} \sum\limits_{\iota_6=0}^{c-\iota_{3}} \iota_1\theta	\Phi^{(\iota_1,\iota_2,\iota_3,\iota_6,c-(\iota_3+\iota_6),\iota_6)}	\\&+	\sum\limits_{\iota_1=1}^{\infty}\sum\limits_{\iota_2=0}^{c-1} \sum\limits_{\iota_3=c-\iota_2}^{c-1}\sum\limits_{\iota_6=c-\iota_3+1}^{N-1} \iota_1\theta	\Phi^{(\iota_1,\iota_2,\iota_3,c-\iota_3,0,\iota_6)}+	\sum\limits_{\iota_1=1}^{\infty}\sum\limits_{\iota_2=c}^{S} \sum\limits_{\iota_3=0}^{c-1}\sum\limits_{\iota_6=0}^{c-\iota_{3}}   \iota_1\theta \Phi^{(\iota_1,\iota_2,\iota_3,\iota_6,c-(\iota_3+\iota_6),\iota_6)}	\\&	+\sum\limits_{\iota_1=1}^{\infty}\sum\limits_{\iota_2=c}^{S} \sum\limits_{\iota_3=0}^{c-1}  \sum\limits_{\iota_6=c-\iota_3+1}^{N-1}  \iota_1\theta	\Phi^{(\iota_1,\iota_2,\iota_3,c-\iota_3,0,\iota_6)}
		\end{align*}
	\end{adjustwidth}
	\item The mean number fraction of successful retrials $\curlywedge_{15}$ is defined as the ratio of the mean number of overall retrials to the mean number of successful retrials and is given by
	\begin{align*}
		&\curlywedge_{15}=\dfrac{\curlywedge_{14}}{\curlywedge_{13}}
	\end{align*}
	\item There are two possible scenarios where all servers will be in vacation mode: the first is when the current inventory level is empty, and the second is when the ordered quantity of $Q$ items has been received while the inventory level is still empty. In both situations, all servers will be on vacation. Consequently, the probability of all servers being on vacation can be calculated as follows:
	\begin{align*}	&\curlywedge_{16}=\sum\limits_{\iota_1=0}^{\infty}\sum\limits_{\iota_2=0,Q}  \sum\limits_{\iota_6=0}^{N} 	\Phi^{(\iota_1,\iota_2, c,0,0,\iota_6)}
	\end{align*}
\end{enumerate}

%%%%%%%%%%%%%%%%%%%%%%%%%%%%%%%%%%%%%%%%%%
\section{Cost Analysis and Numerical Illustration}\label{cos}
The expected total cost (ETC) of the proposed model is given by
$$ETC= ch*\curlywedge_1 + cs*\curlywedge_2+co*\curlywedge_3+cw*\curlywedge_6+cl*\curlywedge_9.$$
\subsection{Parameter Analysis}
The cost and parameter values of the system will be utilized to investigate the six-dimensional stochastic multi-server queuing-inventory problem under consideration.  By varying the parameters, the characteristics of the total cost, customer loss, busy servers, a fraction of successful retrial rate, and waiting time of customers in the orbit and waiting hall are studied. As per the result obtained in the stability condition and normalizing property, the following parameters and costs of the Markov process are assumed: $S=32;s=10; ch =0.01; cs=3; co=1; cw=1.3;cl=0.01;c=3; Q=S-s; N=4; M=5;p=0.7;\theta=0.7;\lambda=2.5;\mu=5;\eta=2.7;\beta=1.5;$ for the analysis of the numerical discussions.
\begin{ex}
	In this example, the impact of the parameters $\lambda$, $\mu$, $\theta$, $\beta$, $\eta$, and $c$ on the total cost can be observed in Table \ref{tab1}. 
	\begin{enumerate}
		\item It has been noted that as the parameter value $\lambda$ increases, the total cost also increases. This is due to the fact that the arrival rate increases, resulting in a greater number of customers in the waiting hall. Therefore, increasing the parameter value $\lambda$ leads to an increase in the total cost.
		\item  As the $\theta$ increases, the total cost also increases. This is because when the retrial rate increases, more customers enter the waiting hall. As a result, the product of $cw$ and $\curlywedge_6$ is increased, leading to an increase in total cost. 
		\item It is evident that the total cost decreases as the value of $\mu$ increases. This is due to the fact that an increase in $\mu$ leads to a decrease in the average service time per customer, as well as a reduction in the size of the waiting room.
		\item From the result in Table \ref{tab1}, we observe that if the vacation completion rate increases, the total cost decreases along with it. As soon as vacation completion occurs, the customer wait in the queue also reduces quickly. Thus, it reduces the total cost.
		\item While increasing the $c$ in the system, the expected total cost of the system is also increased.             
		\item The expected total cost drops whenever the $\beta$ increases. This implies that the setup cost per order is decreasing, which results in a decrease in the overall cost. It also signifies that the average replenishment time is decreasing.
	\end{enumerate}
\newpage
	\textbf{Insights:} Every business's primary goal is to reduce total costs. In such a way, this example investigates the total cost of the change of various parameters and their characteristics. The parameter analysis provides insight to business people, allowing them to develop new strategies and ideas. 
	\vspace{.2 cm}
	{\footnotesize
		\setlength\LTleft{-1.4cm}
		% [inline block 0: 1 envs, 28685 chars -> data_tex | \begin{longtable}{|c|c|c|c|cccc|ccc|ccc|} 			\caption{Expected total cost under the influence of parameters}...]

	}
\end{ex}
\begin{ex}
	In this example, we study the impact of the expected customer loss rate by varying the parameters $\lambda,~\mu,~\theta,~\beta,~\eta,$ and $c$ in Table \ref{tab2}.
	\begin{enumerate}
		\item  Whenever the $\lambda$, and $\theta$ increase, the mean number of customers lost also increases due to waiting hall overflow.    
		\item When $\mu$ increases, the expected customer loss rate decreases. As the service time per customer decreases, the number of customers in the waiting hall will decrease simultaneously. So that a new customer who arrives can easily join the queue. 
		\item  When the rate $\eta$ increases, the value of $\curlywedge_9$ decreases. This is because when the server finishes his vacation, he immediately starts providing service. Consequently, customers in the waiting hall do not have to wait for an extended period, and the waiting area remains uncluttered. As a result, the average number of lost customers is effectively managed.
		\item When $\beta$ is increased, the existing stock level increases monotonically. It ensures that the system has no insufficient stocks in the inventory. Therefore, the customer in the waiting hall does not need to wait due to the shortage of inventory, and so the expected customer loss rate will be reduced.    
		\item  As we stated for the parameter $\mu$, increasing $c$ reduces the number of customers in the waiting hall quickly. So the overflow in the waiting room is then reduced, and the expected loss rate decreases automatically.   
	\end{enumerate} 
	
	\textbf{Insights:}  Customers are usually a vital part of any business. In business, the profit or growth is determined by the arrival rate of the customer and his purchase rate. When the arrival rate of a customer decreases, the sales process and profit are reduced. So a good administration always tries to reduce the customer loss rate of the business. In such a way, the above example gives the customer loss rate-related results to the readers.
\end{ex}
\begin{ex}
	In this example, we explained the impact of the parameters $\lambda,~\theta,~\mu,~c,~\beta$ and $\eta$ on the mean number of servers on vacation and the mean number of busy servers, as shown in figures \ref{fig:2}-\ref{fig:7}.
	\begin{enumerate}
		\item As shown in Figure \ref{fig:2}, as the rate $\lambda$ increases, the mean number of servers on vacation decreases. Because there exists a sufficient number of customers in the queue, vacation continuation is not allowed. As the rate $\mu$ increases, so does $\curlywedge_{11}$. Because $\mu$ increases, the service time per customer is decreased. 
		\item In figure \ref{fig:3}, if the change in $\theta$ is small, then the change in $\curlywedge_{11}$ is also small with an indirect variation. For the fixed waiting hall size, when $c=2$ and $c=3$, the change in $\curlywedge_{11}$ is small but when $c=4$, it is to be noted that the change in $\curlywedge_{11}$ is high.  
		\item As depicted in figure \ref{fig:4}, mean number of servers are in vacation reduces when $\beta$ increases. If the replenishment has been done quickly, then the server cannot be allowed to take a vacation. By increasing $\eta$, the amount of time a server spends on vacation is reduced. Therefore, the server must frequently return to the system, which confirms the decrease in the number of  servers on vacation.
		\item Figure \ref{fig:5} shows that the rate $\mu$ decreases the mean number of busy servers if it increases. It helps to decrease the number of customers in the queue. As the number of customers in the queue shortly decreases, the number of busy servers will also be reduced. The $\curlywedge _{10}$ will be increased as  $\lambda$ increases. Because of the increase in arrivals, all ideal servers will be activated instantly.
		\item If the parameters $c$ and $\theta$ increase, then the mean number of busy servers increases simultaneously since the arrival of a retrial customer enters the queue and activates each server. Once the number of servers is increased, he will immediately join the system to provide service to the customers. This is shown in figure \ref{fig:6}.
		\item In figure \ref{fig:7} $\beta$  and $\eta$  are directly proportional to $\curlywedge_{10}$. If the change in $\eta$ is small then the $\curlywedge_{10}$ is also small.         
	\end{enumerate}
	\textbf{Insights:}
	Analyzing the mean number of servers on vacation and the mean number of busy servers can provide valuable insights for business professionals. By examining the impact of various parameters on $\curlywedge_{10}$ and $\curlywedge_{11}$, readers can develop new strategies and ideas to reduce the average number of servers on vacation and increase the average number of busy servers in their businesses.
		\begin{figure}[]
		\begin{adjustwidth}{-.6cm}{-.4cm}
			\begin{minipage}[b]{0.54\linewidth}
				\includegraphics[width=1\linewidth]{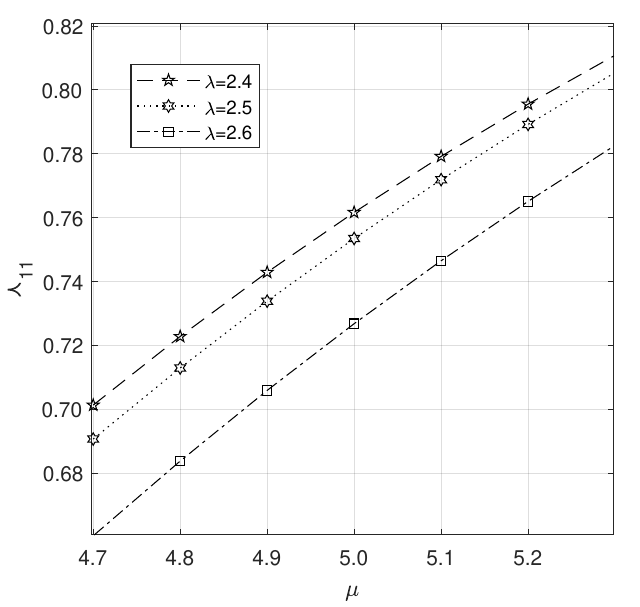} 
				\caption{Mean number of server is on vacation vs $\lambda$ and $\mu$} 
				\label{fig:2}
			\end{minipage}
			\begin{minipage}[b]{0.56\linewidth}
					\includegraphics[width=1.02\linewidth]{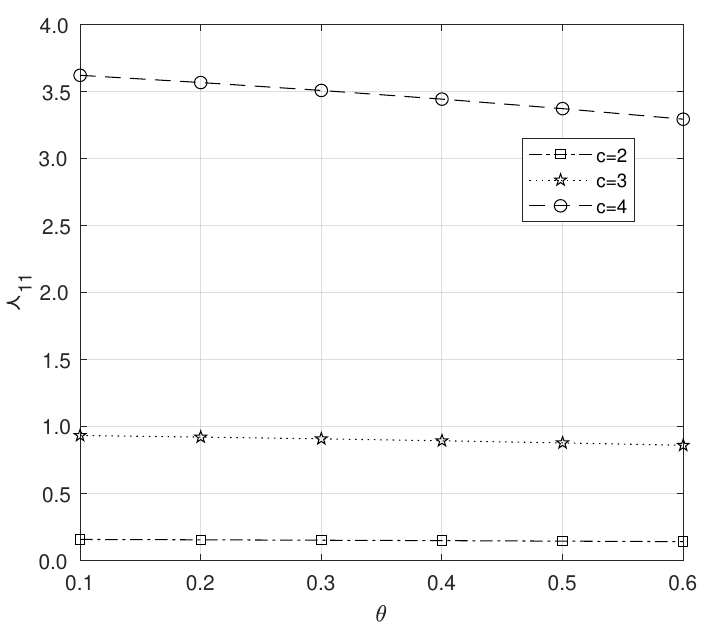} 
				\caption{Mean number of server is on vacation vs $c$ and $\theta$ } 
				\label{fig:3}
			\end{minipage} 
			\begin{minipage}[h]{0.55\linewidth}
					\includegraphics[width=1\linewidth]{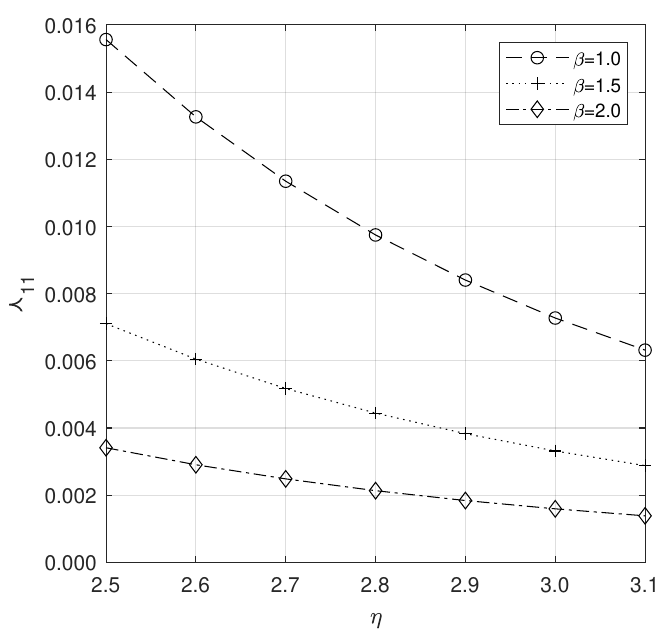} 
				\caption{Mean number of server is on vacation vs $\beta$ and $\eta$} 
				\label{fig:4}
			\end{minipage} 
			\begin{minipage}[h]{0.55\linewidth}
				\includegraphics[width=1\linewidth]{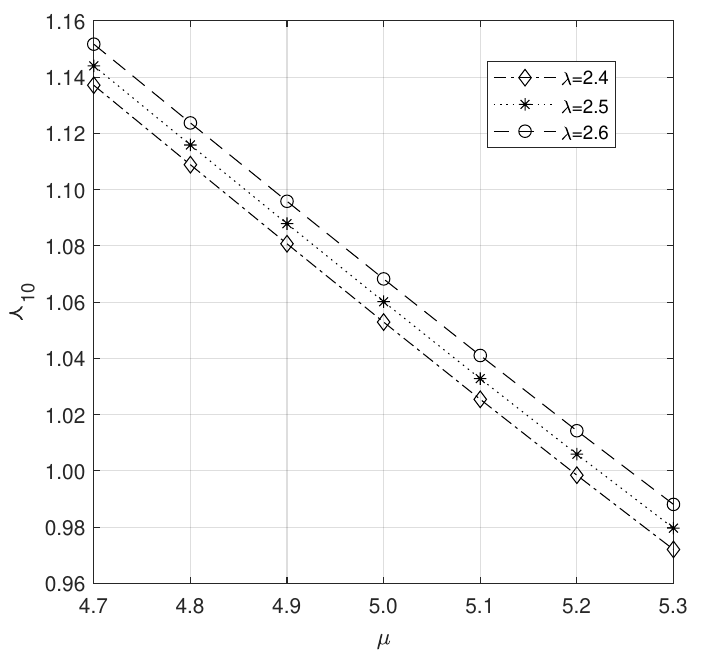} 
			\caption{Mean number of busy servers  vs $\lambda$ and $\mu$} 
			\label{fig:5}
			\end{minipage} 
		\end{adjustwidth}
	\end{figure}
	\begin{figure}
	\begin{adjustwidth}{-.5cm}{-.5cm}
		\begin{minipage}{0.55\textwidth}
			\includegraphics[width=1\linewidth]{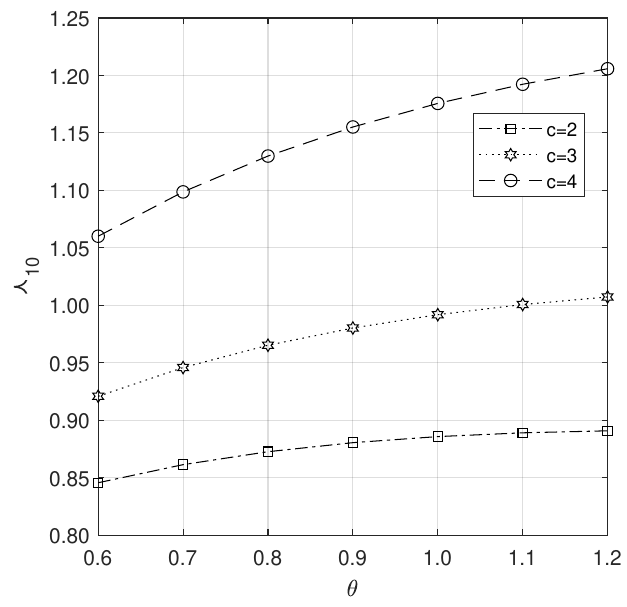} 
			\caption{Mean number of busy servers vs $c$ and $\theta$ } 
			\label{fig:6}
		\end{minipage}
		\begin{minipage}{0.55\textwidth}
			\includegraphics[width=1\linewidth]{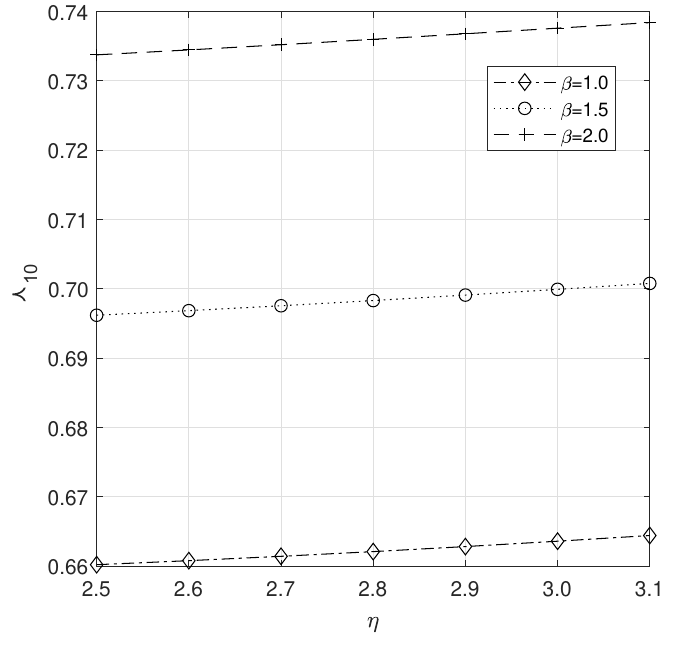} 
			\caption{Mean number of busy servers vs $\beta$ and $\eta$}
			\label{fig:7}
		\end{minipage}
	\end{adjustwidth}
\end{figure}
\end{ex}
\newpage
\vspace*{.5cm}
{\footnotesize
	\setlength\LTleft{-1.4cm}
	% [inline block 1: 2 envs, 51617 chars -> data_tex | \begin{longtable}{|c|c|c|c|cccc|ccc|ccc|} 		\caption{Expected customer lost rate with the variation of parameters}...]
	
}
\begin{ex}
	Table \ref{tab3} explains the fraction of the successful rate of retrial under the parameter variation.
	\begin{enumerate}
		\item As the rate $\lambda$ increases, the fraction of the successful rate of retrial decreases due to the overflow in the waiting hall.
		\item If $\mu$ increases, the number of customers in the waiting hall starts decreasing, and so a fraction of successful retrials increases. Similarly, the increase in the number of servers in the system causes an increase in the successful rate of retrials in the system.
		\item As we predicted, the rate $\theta$ reduces the fraction of successful retrials when it increases because more customers from the orbit enter the waiting hall.
		\item When $\beta$ increases, the inventory storage system maintains a positive stock level so that an arriving customer can purchase the item and leave the system without delay. Hence, it increases the fraction of the successful retrial rate.
		\item Suppose the server returns from vacation sooner than expected, then many customers get their service without a server delay. On the other hand, if the server is on vacation, a customer has to wait some time until his turn occurs. So, if the servers are already available in the system, his turn occurs quickly. Therefore, the vacation completion rate increases the fraction of the success rate of a retrial if it increases.
	\end{enumerate}
	\textbf{Insights:} When we approach a business economically, caring for the orbital customer's arrival will influence the growth of the business. So, this example explores the characteristics of the fraction of successful retrial rates of orbit customers with parameter variation. In such a way, the parameters $c$, $\beta$, and $\eta$ have great importance to overcome the loss of retrial customers.
\end{ex}
\begin{ex}
	This example examines the orbit and waiting hall customer's waiting times in Tables \ref{tab4} and \ref{tab5} respectively, using the parameters $\lambda, \mu, \theta, \beta, c$, and $\eta$. 
	\begin{enumerate}
		\item The parameters $\mu$ and $c$ have an equivalent impact on reducing the waiting time of a customer, whether they are in the queue or in the orbit. In other words, decreasing the average time it takes to complete a service or increasing the number of servers will result in a decrease in the number of customers waiting in the queue, which will also lead to a reduction in the number of customers in orbit. This ultimately leads to a reduction in the waiting time for both sets of customers.
		\item In a similar way, the replenishment rate helps the system avoid a shortage of inventory if it increases. Since the inventory is positive, the customers in the waiting hall do not need to wait a long time in the queue due to the lack of inventory. So, both customers' waiting times will be reduced.
		\item The vacation completion rate also contributes to reducing the waiting time of the customers in a queue or orbit. This rate ensures that the servers are mostly available when it increases.
		\item  If $\lambda$ increases, the number of customers in the waiting hall increases and soon overflows. Thus, both customers' waiting times in the queue and the orbit increase whenever it increases.
		
		\item Increasing $\theta$ only affects the waiting time of customers in the orbit, not those in the waiting hall. This is because the waiting hall quickly fills up, and an increase in $\theta$ results in a decrease in the number of customers in orbit, which ultimately leads to a reduction in their waiting time.
	\end{enumerate}
	\textbf{Insights:}
	It is crucial to analyze waiting time in customer-centric service systems in order to run a successful business. Ensuring customer satisfaction through the service system is paramount in this environment, as reducing customers' waiting time leads to higher levels of satisfaction and a better reputation for the company. This, in turn, leads to more customers returning to make additional purchases within the same system. Therefore, this example serves as a persuasive argument for readers and business people to prioritize reducing waiting times for their customers.
\end{ex}
{\footnotesize
	\setlength\LTleft{-1.4cm}
	% [inline block 2: 2 envs, 45515 chars -> data_tex | \begin{longtable}{|c|c|c|c|cccc|ccc|ccc|} 		\caption{Mean waiting time for orbit customers  under the impact of paramete...]

}
\begin{ex}
	This example explores different performance measures for the parameters $N$ and $S$.
	\begin{enumerate}
		\item Figure \ref{fig:8} shows that increasing the size of the waiting hall would increase the $ETC$. Because $\curlywedge_6$ increases when $N$ increases. While varying the maximum inventory level $S$, a local minimum is obtained.
		\item By increasing the waiting hall size, $\curlywedge_6$ experiences an increase. Additionally, Figure \ref{fig:9} shows the optimum mean waiting time for customers in the waiting hall, which is determined by varying the maximum inventory level.
		\item Figure \ref{fig:10} shows that increasing the values of $S$ and $N$, decreases the mean waiting time of the customer in orbit. Because orbit customers can enter the waiting hall as its size increases. Similarly, when $S$ is increasing, customers in the waiting hall get service and leave quickly, which gives orbital customers a chance to enter the waiting hall.
		\item As the number of customers in the waiting hall and items in the inventory increase, the possibility that a server goes on vacation is reduced, which is clear from Figure \ref{fig:11}.
		\item From Figure \ref{fig:12}, we see that increasing the waiting hall size reduces customer loss. When $N=5, 10$ the expected customer loss ratio is small, whereas when we choose a large $N$, the expected customer loss ratio is high. Similarly, if the product availability increases, the customer in the waiting hall can get their service as much as possible and leaves the system soon. If a new customer faces that a waiting hall is full, his loss rate is to be controlled.
		
	\end{enumerate}
	\noindent\textbf{Insights:} When there is a large waiting room, the number of sales goes up because customers are less likely to leave, and the average time  customers wait in the orbit goes down. Also, the convexity obtained for total cost and waiting time for the waiting hall suggests that an optimal inventory can be maintained.
		\begin{figure}[]
		\begin{adjustwidth}{-.6cm}{-.6cm}
			\begin{minipage}[b]{0.55\linewidth}
				\includegraphics[width=1\linewidth]{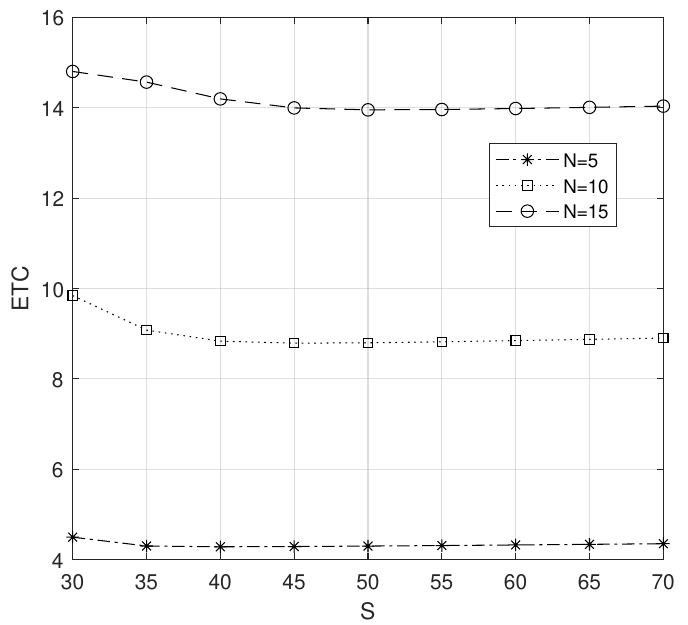} 
				\caption{Expected total cost vs $N$ and $S$ } 
				\label{fig:8}
			\end{minipage}
			\begin{minipage}[b]{0.56\linewidth}
				\includegraphics[width=1\linewidth]{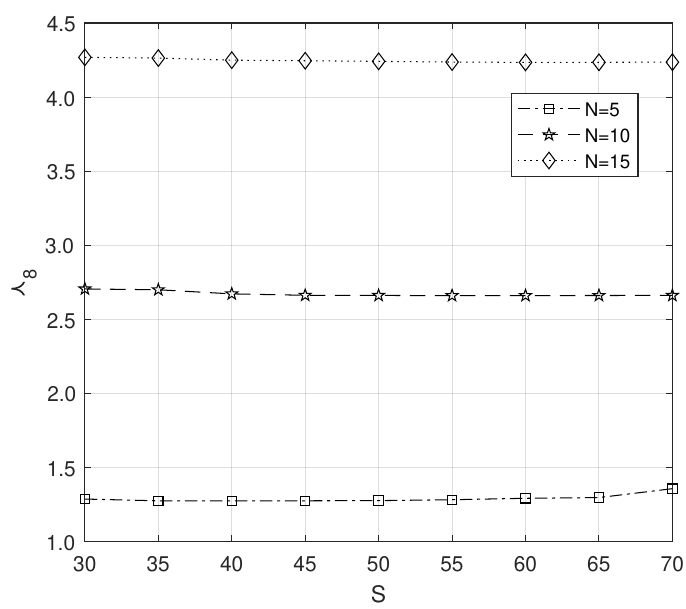} 
				\caption{Mean waiting time for waiting hall vs $N$ and $S$}
				\label{fig:9}
			\end{minipage} 
			\begin{minipage}[h]{0.56\linewidth}
				\includegraphics[width=1\linewidth]{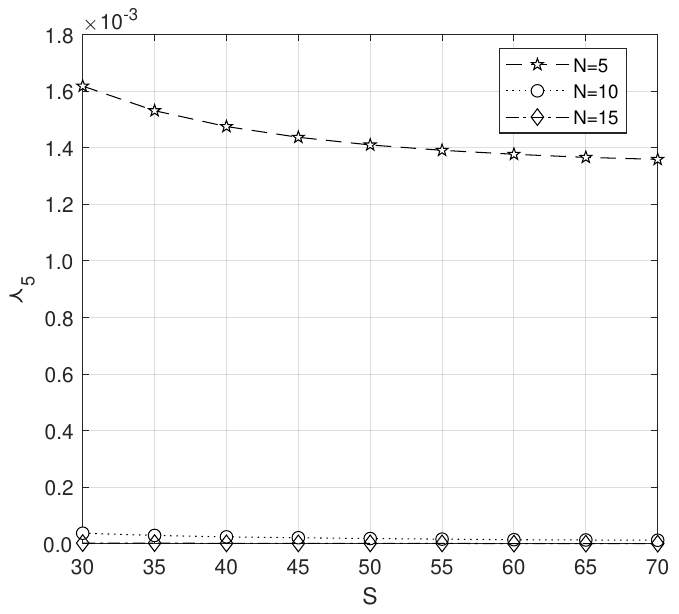} 
				\caption{Mean waiting time for Orbit vs $N$ and $S$ } 
				\label{fig:10}
			\end{minipage} 
			\begin{minipage}[h]{0.55\linewidth}
				\includegraphics[width=1\linewidth]{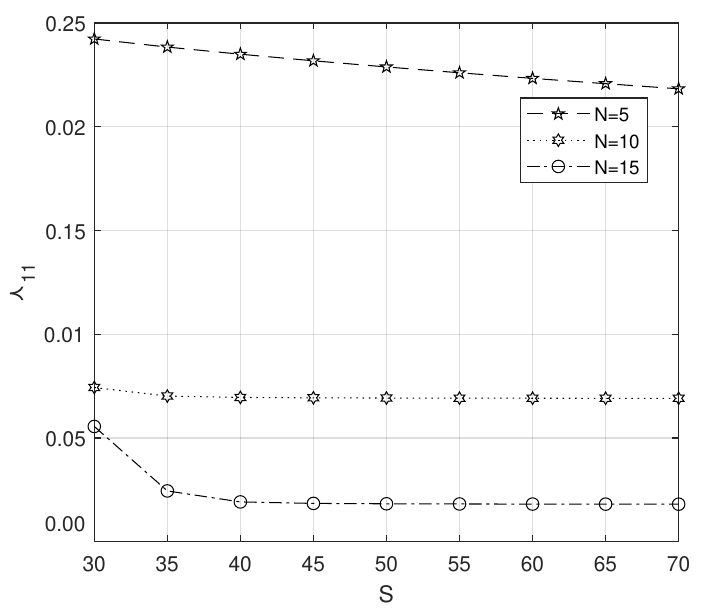} 
				\caption{Mean number of server is on vacation vs $N$ and $S$}
				\label{fig:11}
			\end{minipage} 
		\end{adjustwidth}
	\end{figure}
	\begin{figure}
		\includegraphics{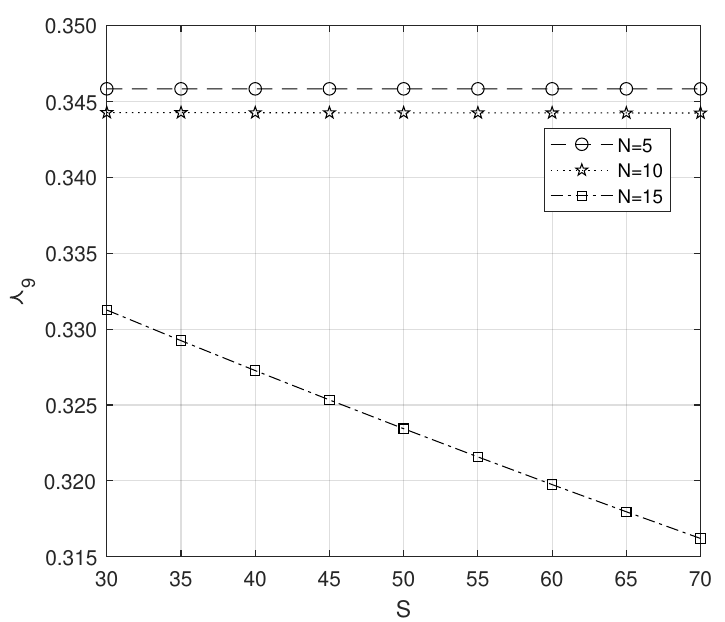}
		\caption{Mean number of customer lost vs $N$ and $S$}
		\label{fig:12}
	\end{figure}
\end{ex}
%%%%%%%%%%%%%%%%%%%%%%%%%%%%%%%%%%%%%%%%%%
\section{Conclusion}\label{con}
\indent \indent 	
In this article, we investigated a multi-server queueing-inventory system with asynchronous multiple-server vacations. In addition, this model explored the retrial facility and $(s, Q)$ reordering policy for the replenishment process. As per the assumptions, a six-dimensional stochastic process is framed and transformed into an infinitesimal matrix by the transition of states. The stability condition and joint probability vector are obtained through the matrix geometric approximation method, based on the structure of the infinitesimal generator matrix. Moreover, the expected total cost function is established by considering certain system performance measures.

From the analysis of this paper, we observe the following results:
\subsection{Observations}
These observations are obtained by the numerical investigations:
\begin{enumerate}
	\item From the parameter analysis, $\beta$, $\eta$, and $\mu$ are directly proportional to an expected total cost, whereas $\lambda$, $\theta$ and $c$ are inversely proportional to the expected total cost.
	\item The parameters $\lambda$, $\theta$ were calculated using the mean number of busy servers with a direct variation property. On the other hand, $\beta$, $\eta$, and $\mu$ contribute to give a rest period to the servers and react with an indirect variation.
	\item The expected loss rate of a customer is controlled by the increase of the parameters $\beta$, $\eta$, $c$ and $\mu$.
	\item The fraction of the successful retrial rate of an orbit customer is slowed down due to the overflow of the waiting hall. To control such an overflow, one can increase the values of parameters $\beta$, $\eta$, $c$, and $\mu$ as required.
	\item The waiting time of queue and orbit customers has an indirect variation relation with the parameters $\beta$, $\eta$, $c$ and $\mu$. As the situation requires, one can vary the parameter value to get the desired output.
\end{enumerate}
\subsection{Limitations}
When we consider $c=1$, this model converts into a single server QIS with multiple server vacations. If the retrial customer is independent of the customer in the orbit, then the corresponding classical retrial policy is converted to a constant retrial policy. Assuming $p=1$, then the proposed system sends the overflowing customers to an orbit compulsorily whereas if we assume $p=0$ all the customers are considered to be lost.
\subsection{Future Direction}
The proposed model may be discussed with synchronous vacations among the multi-server queueing-inventory system in future research.

\end{document}